\newtheorem{theorem}{Theorem}[section]
\newtheorem{thm*}{Theorem}
\newtheorem{lemma}[theorem]{Lemma}
\newtheorem{prop}[theorem]{Proposition}
\theoremstyle{definition}
\newtheorem{define}[theorem]{Definition}
\newtheorem{ex}[theorem]{Example}
\newtheorem{remark}[theorem]{Remark}
\newcommand{\field}[1]{\mathbb{#1}}
\newcommand{\zkz}{\field{Z}/k\field{Z}}
\newcommand{\Z}{\field{Z}}
\newcommand{\C}{\field{C}}
\begin{document}
\title[Relative geometric assembly and mapping cones, Part I]{Relative geometric assembly and mapping cones,\\
Part I: The geometric model and applications}
\author{Robin J. Deeley, Magnus Goffeng}
\date{\today}

\begin{abstract}
Inspired by an analytic construction of Chang, Weinberger and Yu, we define an assembly map in relative geometric $K$-homology. The properties of the geometric assembly map are studied using a variety of index theoretic tools (e.g., the localized index and higher Atiyah-Patodi-Singer index theory). As an application we obtain a vanishing result in the context of manifolds with boundary and positive scalar curvature; this result is also inspired and connected to work of Chang, Weinberger and Yu. Furthermore, we use results of Wahl to show that rational injectivity of the relative assembly map implies homotopy invariance of the relative higher signatures of a manifold with boundary. 
\end{abstract}

\maketitle

\section{Introduction}

We construct and study an assembly map in relative geometric (i.e., Baum-Douglas) $K$-homology. Our results are inspired and related to the analytic relative assembly map constructed via localization algebras by Chang, Weinberger, and Yu in \cite[Section 2]{CWY}. However, in the introduction, we concentrate on the geometric aspects of our construction; localization algebras and the construction in \cite[Section 2]{CWY} are reviewed in detail in Section \ref{CWYmap}. One of the advantages of the geometric model of $K$-homology is the explicit construction of Chern characters, see for example \cite[Part 5]{BD}. Building on the results of the current paper, we construct Chern characters in the relative case in \cite{DGrelII}. The construction in \cite{DGrelII} is similar to the one in \cite[Section 4]{DGIII}. The assembly maps we consider in this paper are for free actions and coincides with Baum-Connes' assembly mapping for proper actions \cite{bch} when the involved groups are torsion-free.

We begin with a brief introduction to geometric $K$-homology and then discuss the five main results of the paper; three of these results are explicitly stated. Geometric $K$-homology (see any of \cite{BCW,BD,BE, Rav, Wal}) provides an alternative realization of certain $KK$-groups. If $X$ is a finite $CW$-complex and $A$ is a unital $C^*$-algebra, then
\[
KK^*(C(X), A) \cong K^{\rm geo}_*(X; A) := \left\{ (M, E_A, f)\right\}/\sim
\]
where $M$ is a smooth compact spin$^c$-manifold (without boundary), $E_A\to M$ is a smooth $A$-bundle (i.e., $E_A$ is a locally trivial bundle with fibers given by finitely generated, projective Hilbert $A$-modules), $f:M \rightarrow X$ is a continuous map, and $\sim$ is the geometrically defined Baum-Douglas relation (i.e., the equivalence relation generated from bordism, vector bundle modification and direct sum/disjoint union). When $X$ is a $CW$-complex which is only locally finite, $K_*^{\rm geo}(X;A)$ is isomorphic to the compactly supported $KK$-theory group.

Let $\Gamma$ be a finitely generated discrete group and $B\Gamma$ its classifying space (which we assume is a locally finite $CW$-complex). The maximal group $C^*$-algebra of $\Gamma$ is denoted by $C^*_{\bf max}(\Gamma)$. Then, we can form $K^{\rm geo}_*(B\Gamma; \C)$ (which we denote by $K^{\rm geo}_*(B\Gamma)$) and $K^{\rm geo}_*(pt; C^*_{\bf max}(\Gamma))$. Furthermore, the maximal Baum-Connes assembly map for free actions is defined at the level of these geometric cycles via
\begin{equation}
\label{geometricassemblyequation}
\mu_{\textnormal{geo}} : (M,E_{\field{C}}, f) \mapsto (M, E_{\field{C}}\otimes_{\field{C}} f^*(\mathcal{L}_{B\Gamma})).
\end{equation}
By the definition of a cycle for $K_*^{\rm geo}(B\Gamma)$ (see above), $f:M\to B\Gamma$ is a continuous mapping. Here $\mathcal{L}_{B\Gamma}\to B\Gamma$ denotes the Mischenko bundle: 
\begin{equation}
\label{mishdef}
\mathcal{L}_{B\Gamma}:= E\Gamma \times_{\Gamma} C^*_{\bf max}(\Gamma)\to B\Gamma.
\end{equation}
The reader can find more on this map in for example \cite[Introduction]{DG} and references therein. Under the isomorphisms with the analytic models, the geometrically defined assembly mapping \eqref{geometricassemblyequation} coincides with the standard construction of assembly (see \cite{land}).

Our goal is to extend this geometric construction to the relative case. That is, to define a geometric assembly map given as input a group homomorphism $\phi: \Gamma_1 \rightarrow \Gamma_2$ rather than a single group $\Gamma$. The homomorphism $\phi$ induces maps
\[
B\phi:B\Gamma_1\to B\Gamma_2 \hbox{ and }\phi: C^*_{\bf max}(\Gamma_1) \to C^*_{\bf max}(\Gamma_2).
\]
Note the abuse of notation used in the definition of the latter.

The domain and codomain of our assembly map are introduced in detail in Section \ref{GeoMap} (in particular, see subsections \ref{domainsubs} and \ref{codomainsubs}). They are denoted respectively by $K_*^{\textnormal{geo}}(B\phi) \hbox{ and }K_*^{\textnormal{geo}}(pt; \phi)$. These abelian groups are defined using geometric cycles and the relative geometric assembly map, $\mu_{\textnormal{geo}}$, is defined explicitly at the level of cycles, see Definition \ref{defofassembly}. The compatibility between the relative groups/relative assembly map and absolute groups/absolute assembly map within the geometric context is made precise in Theorem \ref{sesforbphi}. The reader who is only interested in the geometric definition of the relative assembly need only read Section \ref{GeoMap}, which is self-contained.

It is natural to compare the geometrically defined assembly map to the analytic assembly map of Chang, Weinberger and Yu defined using asymptotic morphisms (see \cite{connesbook,conneshigson}). In particular, Theorem \ref{sesforbphi} should be compared with \cite[Theorem 2.17]{CWY}. To make such comparisons precise, one must first construct explicit isomorphisms at the level of the $K$-homology groups defining the domain and codomain of these maps.  For the domains, this is basically the standard isomorphism between analytic and geometric $K$-homology. We review its construction, which uses the localized index (see subsection \ref{localindex}) in Theorem \ref{relaassonkhom}.

The isomorphism between the codomains is more involved. It fits into the general context of any unital $*$-homomorphism. As such, for the moment, suppose $\phi: B_1 \rightarrow B_2$ is a unital $*$-homomorphism between unital $C^*$-algebras; of course, the case of $\phi: C^*_{\bf max}(\Gamma_1) \rightarrow C^*_{\bf max}(\Gamma_2)$ is the most relevant. One can form the mapping cone $C^*$-algebra associated to $\phi$:
\[
C_{\phi}:=\{ (f, a) \in   C_0((0,1],B_2)\oplus B_1\: | \: f(1)= \phi(a) \}
\]

\begin{thm*} 
\label{firstthm}
The construction of $\Phi_{{\rm cone}}$ in Equation \eqref{definofphicone} on page \pageref{definofphicone} produces a well defined isomorphism $\Phi_{{\rm cone}}:K_*^{\textnormal{geo}}(pt;\phi)\to K_{*+1}(C_\phi)$ fitting into a commutative diagram with exact rows and vertical mappings being isomorphisms:
\begin{equation}
 \minCDarrowwidth15pt
\begin{CD}
@>\phi_*>> K_*^{\textnormal{geo}}(pt;B_2) @>r>> K_*^{\textnormal{geo}}(pt;\phi) @>\delta >> K_{*+1}^{\textnormal{geo}}(pt;B_1) @>\phi_*>> \\
@.  @VVV  @VV\Phi_{\textnormal{cone}} V  @VV V \\
@>\phi_*>> K_*(B_2) @>r>> K_{*+1}(C_\phi) @>\delta >> K_{*+1}(B_1) @>\phi_*>> \\
\end{CD}
\end{equation}
where the top row is from \cite[Theorem 3.13]{DeeRZ} and the bottom row is the long exact sequence in $K$-theory obtained from the short exact sequence of $C^*$-algebras:
\[ 0 \rightarrow SB_2 \rightarrow C_{\phi} \rightarrow B_1 \rightarrow 0
\]
\end{thm*}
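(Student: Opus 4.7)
The plan is to first unpack the definition of $\Phi_{\rm cone}$ on a cycle and then verify in turn: (i) well-definedness on geometric equivalence classes, (ii) compatibility with the two horizontal maps $r$ and $\delta$ in the diagram, and finally (iii) apply the five-lemma. A typical cycle for $K_*^{\textnormal{geo}}(pt;\phi)$ consists of a relative datum, i.e.\ a spin${}^c$ manifold $W$ with boundary carrying a $B_2$-bundle, together with a boundary cycle $(M,F_{B_1})$ mapping to $\partial W$ in a way compatible with $\phi$. On such a cycle $\Phi_{\rm cone}$ assigns, via \eqref{definofphicone}, a $K$-theory class of $C_\phi$; concretely this is an index-type invariant built from a Dirac operator on $W$ whose path-like component $f\in C_0((0,1],B_2)$ is produced by a model of Atiyah--Patodi--Singer boundary data and whose $B_1$-component records the index on the boundary, the compatibility $f(1)=\phi(a)$ being forced by the $\phi$-relation.

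For step (i), I would check each relation generating $\sim$ in turn. Disjoint union/direct sum compatibility is immediate because both sides are additive. For bordism one exhibits a manifold with corners realising the bordism between two relative cycles and argues that the associated mapping-cone class is the boundary of a class represented by this bordism, hence zero in $K_{*+1}(C_\phi)$. Vector bundle modification is handled by the standard argument: modification corresponds to tensoring with the Bott class, which acts trivially at the level of the $K$-theory of the coefficient algebra (Morita invariance). The main subtlety here is that the boundary piece of the cycle gets modified independently and one must check the two modifications give the same $C_\phi$-class; this is a direct homotopy constructed by shrinking the Bott fibres uniformly over $W$.

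For step (ii), the left square with $r$ reduces to observing that on an absolute cycle $(N,E_{B_2},f)$ regarded as a relative one with empty boundary, the APS model collapses to the ordinary index paired with the inclusion $SB_2\hookrightarrow C_\phi$; this matches the standard description of $r$ in the $K$-theory long exact sequence. The right square with $\delta$ is the heart of the matter: one must show that the analytic boundary map in the long exact sequence of
\[
0\to SB_2\to C_\phi\to B_1\to 0
\]
applied to $\Phi_{\rm cone}[W,E_{B_2},(M,F_{B_1})]$ coincides with the class represented by $(M,F_{B_1})$ under the (already-known) isomorphism $K_{*+1}^{\textnormal{geo}}(pt;B_1)\cong K_{*+1}(B_1)$. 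I would verify this by choosing an explicit lift of the $B_1$-component to $C_\phi$, namely a path of operators interpolating between the boundary Dirac and its image under $\phi$, and then computing its connecting-map image directly; this is the step where one genuinely needs the localised/APS index machinery from Section~\ref{localindex}.

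Once both squares commute, the rightmost and leftmost vertical arrows (the standard comparison maps between absolute geometric and analytic $K$-homology of a point with coefficients) are isomorphisms by classical results, the top row is exact by \cite[Theorem 3.13]{DeeRZ}, and the bottom row is exact by construction. The five-lemma then yields that $\Phi_{\rm cone}$ is an isomorphism. I expect the main obstacle to be the commutativity of the $\delta$-square, because translating the geometric boundary (``restrict to $\partial W$'') into the analytic connecting homomorphism for the mapping-cone extension requires a careful perturbation argument to exhibit the APS-type path as a genuine lift in $C_\phi$; the other verifications are essentially formal once the cycle-level formula \eqref{definofphicone} is in hand.
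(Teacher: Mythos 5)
Your high-level skeleton (well-definedness on cycles, compatibility with $r$ and $\delta$, five lemma) is the same as the paper's, but the proposal misses the ideas that actually carry the proof. The first gap is that you never engage with what $\Phi_{\rm cone}$ is: by Equation \eqref{definofphicone} it is a \emph{sum} of two terms, a Cayley-transform class $[c(\hat{D}^\partial_E+\hat{A})\oplus c(D_F)]\in K_1(C_\phi)$ built from a rescaled path of boundary operators, and $j_*\mathrm{ind}_{APS}(D_E^W,A)$, both depending on an auxiliary trivializing operator $A$ (which exists but is far from unique). Well-definedness therefore begins with showing that the two dependences on $A$ cancel; this is Lemma \ref{cyctoclasses}, whose proof compares spectral projections $P_A$, $P_{A'}$ against a common spectral section $Q_0$ and invokes the relative index theorem $\mathrm{ind}_{APS}(D_E^W,A)-\mathrm{ind}_{APS}(D_E^W,A')=[P_{A'}-P_A]$ of Leichtnam--Piazza. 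Nothing in your write-up accounts for this, and without it the "index-type invariant" you describe is not a function of the cycle at all.

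The second gap is in the two relations. For bordism, the paper does not argue that the class "is the boundary of a class represented by the bordism"; the mechanism is that the complementary piece $W_0=\partial Z\setminus W^\circ$ of a nullbordism carries a $B_1$-bundle $\tilde{F}_{B_1}$ with $\tilde{E}_{B_2}|_{W_0}\cong\tilde{F}_{B_1}\otimes_\phi B_2$, and for such cycles $\Phi_{\rm cone}=0$ because the APS index factors through $\phi_*$ (hence dies under $j_*$) while the Cayley term is explicitly null-homotopic; one then concludes by additivity and closed-manifold bordism invariance. Your sketch never isolates this special class of cycles, which is where the $\phi$-structure of the mapping cone enters. For vector bundle modification, "Morita invariance of the coefficient algebra" is the wrong mechanism: modification replaces $W$ by the sphere bundle $S(V\oplus{\bf 1}_{\field{R}})$, and the proof decomposes the modified operator as $\hat{D}_E^V+\hat{A}^V=(\hat{D}_E+\hat{A})\oplus\hat{D}_E^\perp$ with $\hat{D}_E^\perp$ invertible, so that the perpendicular summand contributes a trivial class. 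Finally, you locate the main difficulty in the $\delta$-square, but once \eqref{definofphicone} is understood that square is essentially immediate ($j_*$ lands in the image of $K_*(SB_2)$ and so dies under $\delta$, while the $B_1$-component of the Cayley class is $c(D_F)$, the analytic class of the boundary cycle); the real work is the well-definedness just described.
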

This result is a substantial improvement of results in \cite{DeeMappingCone}: firstly, the assumption that $\phi_*: K_*(B_1) \rightarrow K_*(B_2)$ is injective has been removed and secondly, in the definition $\Phi_{\rm cone}$, one can choose any trivializing operator to construct the higher APS-index. Higher APS-index theory is reviewed before the construction of $\Phi_{\rm cone}$, see Section \ref{IsoGeoToAna}. The definition of a trivializing operator can be found in Definition \ref{diracsandtrivi} (see page \pageref{diracsandtrivi}).

\subsubsection*{Vanishing results for PSC-metrics} Returning to the special case of $\phi : C^*_{\bf max}(\Gamma_1) \rightarrow C^*_{\bf max}(\Gamma_2)$, obtained from a group homomorphism, we state an application of these theorems (i.e., the previous theorem and Theorem \ref{sesforbphi}). It is stated as Theorem \ref{apswhenpsconbdry} in the body of the paper. An essentially equivalent result was proven as Theorem 2.18 in \cite{CWY} using localization algebras. Our proof uses higher APS-theory and conceptually explains this relative vanishing result through the two steps in the proof: firstly, the existence of a metric of positive scalar curvature near the boundary of a manifold implies that the relative assembly of said manifold is realized as a higher APS-index (Proposition \ref{apswhenpsconbdry}) and secondly that a metric of positive scalar curvature in the interior guarantees the vanishing of the higher APS-index (Lemma \ref{vanishingofaps}). 

\begin{thm*}
\label{thmpscone}
Let $W$ be a connected spin-manifold with boundary and let $[W]\in K_*(W,\partial W)$ denote the fundamental class. If $W$ admits a metric of positive scalar curvature which is collared at the boundary, then 
$$\mu^\phi_{\rm geo}[W]=0 \in K_{{\rm dim}(W)}(pt;\phi)$$
where $\phi: \pi_1(\partial W) \rightarrow \pi_1(W)$ is the group homomorphism induced from the inclusion $i:\partial W\hookrightarrow W$.
\end{thm*}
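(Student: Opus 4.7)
The plan is to reduce the vanishing statement in $K^{\textnormal{geo}}_{\dim W}(pt;\phi)$ to a vanishing statement in the $K$-theory of the mapping cone algebra and then realize the relevant class as a higher Atiyah-Patodi-Singer (APS) index that vanishes by positive scalar curvature. By Theorem \ref{firstthm}, applied to $\phi:C^*_{\bf max}(\pi_1(\partial W))\to C^*_{\bf max}(\pi_1(W))$, the map $\Phi_{\rm cone}:K_*^{\textnormal{geo}}(pt;\phi)\to K_{*+1}(C_\phi)$ is an isomorphism, so it suffices to show that $\Phi_{\rm cone}(\mu^\phi_{\rm geo}[W])=0$ in $K_{\dim W+1}(C_\phi)$.

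The argument then proceeds in two steps, corresponding to the two features of the given metric (positive scalar curvature near the boundary, and positive scalar curvature on the interior). First, I would invoke Proposition \ref{apswhenpsconbdry}: because the metric is a collared PSC metric near $\partial W$, the Lichnerowicz-Weitzenb\"ock formula applied to the Mishchenko-twisted spin Dirac operator on $\partial W$ forces this boundary operator to be invertible as a regular operator on the relevant Hilbert module. This invertibility supplies a trivializing operator in the construction of $\Phi_{\rm cone}$ (and Theorem \ref{firstthm} guarantees that any trivializing operator may be used), which realizes $\Phi_{\rm cone}(\mu^\phi_{\rm geo}[W])$ as the higher APS index in $K_{\dim W+1}(C_\phi)$ of the Mishchenko-twisted Dirac operator on $W$ with APS boundary conditions.

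Second, I would apply Lemma \ref{vanishingofaps}: global PSC on $W$ combined with the APS boundary conditions and the Lichnerowicz formula for the twisted Dirac operator forces this higher APS index to vanish. Chaining the two steps gives $\Phi_{\rm cone}(\mu^\phi_{\rm geo}[W])=0$, and since $\Phi_{\rm cone}$ is an isomorphism, we conclude $\mu^\phi_{\rm geo}[W]=0$ as required.

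The main obstacle is the identification in the first step: one must verify that the geometric cycle representing $\mu^\phi_{\rm geo}[W]$ (namely $W$ regarded as a manifold with boundary, equipped with the Mishchenko bundle pulled back from $B\phi$) is sent by $\Phi_{\rm cone}$ precisely to the higher APS index class. This amounts to unwinding the definition of $\Phi_{\rm cone}$ and checking compatibility with APS boundary conditions built from the PSC trivialization on $\partial W$; the upgraded flexibility in Theorem \ref{firstthm}, namely that arbitrary trivializing operators are admissible, is exactly what makes this identification go through directly, without the injectivity hypothesis that was required in \cite{DeeMappingCone}.
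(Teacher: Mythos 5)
Your proposal is correct and follows essentially the same route as the paper: the paper's proof of this result (Theorem \ref{vanishingpscresult}) is precisely the composition of Proposition \ref{apswhenpsconbdry} (PSC near the boundary makes the Mishchenko-twisted boundary operator invertible, so via Lemma \ref{thelemmaformerlyknownasproposition3.12} the assembled class is realized as $\Phi_{\rm cone}^{-1}(j_*(\mathrm{ind}_{APS}(D^W_{f^*(\mathcal{L})},0)))$) with Lemma \ref{vanishingofaps} (global collared PSC kills the higher APS index, which the paper establishes by passing to the cylindrical extension $W_\infty$ and using invertibility in the $b$-Mishchenko-Fomenko calculus together with \cite[Proposition 2.4]{PSrhoInd}). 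The only minor caveat is that the vanishing in the second step is not an immediate consequence of the Lichnerowicz formula with APS boundary conditions but requires this comparison with the $b$-index on the cylindrical extension; since you invoke Lemma \ref{vanishingofaps} for exactly this, the argument is sound.
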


\subsubsection*{The strong Novikov property} One of the main applications of the assembly map is to the Novikov conjecture on homotopy invariance of relative higher signatures. In the relative setting, this problem has been discussed in \cite[Section 12]{weinnov} and \cite[Section 4.9]{Lott}. The Novikov conjecture follows from rational injectivity of the free assembly map. With this implication as motivation, we say that a group homomorphism $\phi$ has the strong relative Novikov property if $\mu_{\rm geo}^\phi:K_*^{\rm geo}(B\phi)\to K_*(pt;\phi)$ is rationally injective. The strong Novikov conjecture for $\Gamma_1$ and $\Gamma_2$ has no obvious implication for the strong relative Novikov property of a homomorphism $\phi:\Gamma_1\to \Gamma_2$. Neither does the Baum-Connes conjecture for $\Gamma_1$ and $\Gamma_2$ if there is torsion present. The relation to homotopy invariance of relative higher signatures is given in the following theorem appearing as Theorem \ref{homotopyandnovi} below. Its proof relies on a result of Wahl \cite{WahlSurSet}.

\begin{thm*}
\label{introhomotopyandnovi}
Let $\phi:\Gamma_1\to \Gamma_2$ be a group homomorphism with the strong relative Novikov property. For any manifold with boundary $W$ and mappings $f:W\to B\Gamma_2$, $g:\partial W\to B\Gamma_1$ such that $f|_{\partial W}=B\phi\circ g$, the relative higher signatures
\begin{equation}
\mathrm{sign}_\nu(W ,(f,g)):=\int_W (f,g)^*(\nu)\wedge L(W), \quad\nu\in H^*(B\phi),
\end{equation}
are orientation preserving homotopy invariants of $[i:\partial W\to \overline{W}]$ (see Definition \ref{homotopiesofomor}). 
\end{thm*}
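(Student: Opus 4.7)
The plan is to adapt the classical Mishchenko--Kasparov argument for the Novikov conjecture to the relative setting. The main object is a relative signature class $[\mathrm{Sign}(W,f,g)] \in K_{\dim W}^{\textnormal{geo}}(B\phi)$ built from the signature operator on $W$ together with the boundary data $g:\partial W\to B\Gamma_1$ and the compatibility $f|_{\partial W} = B\phi\circ g$. In the geometric model of $K_*^{\textnormal{geo}}(B\phi)$, a cycle includes a manifold with boundary together with maps to $(B\Gamma_2,B\Gamma_1)$ over $B\phi$, so this signature class is naturally realized there; first I would check that the Baum--Douglas-type relations on cycles respect the construction so that $[\mathrm{Sign}(W,f,g)]$ is well-defined.

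Next, I would establish a Chern--character identity of the form
\[
\mathrm{sign}_\nu(W) = \bigl\langle \mathrm{ch}([\mathrm{Sign}(W,f,g)]),\,\nu\bigr\rangle, \qquad \nu\in H^*(B\phi),
\]
using the relative Chern character announced in \cite{DGrelII}. This is the relative analogue of the classical identity that makes the $L$-class emerge from the signature operator, and the calculation reduces, via the index theorem for manifolds with boundary, to recognising $(f,g)^*(\nu)\wedge L(W)$ as the integrand. With this pairing in hand, I would invoke Wahl's theorem \cite{WahlSurSet}, which asserts that the analytic signature class in the mapping cone $K$-theory $K_{\dim W+1}(C_\phi)$ is an orientation-preserving homotopy invariant of $[i:\partial W\to\overline{W}]$. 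Translating through $\Phi_{\textnormal{cone}}$ from Theorem \ref{firstthm}, this gives that $\mu_{\textnormal{geo}}^\phi[\mathrm{Sign}(W,f,g)]$ is a homotopy invariant in $K_{\dim W}^{\textnormal{geo}}(pt;\phi)$. Rational injectivity of $\mu_{\textnormal{geo}}^\phi$ (the strong relative Novikov property) then shows that $[\mathrm{Sign}(W,f,g)]\otimes 1 \in K_{\dim W}^{\textnormal{geo}}(B\phi)\otimes \mathbb{Q}$ itself is a homotopy invariant, and pairing with $\nu$ concludes the argument.

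The principal obstacle is the first step: verifying that the relative signature cycle carries exactly the boundary trivialisation demanded by the relative geometric model and that its Chern--character pairing with $H^*(B\phi)$ matches $\int_W(f,g)^*(\nu)\wedge L(W)$ on the nose, rather than modulo some lower-order defect term of APS type. A secondary concern is reconciling the notion of orientation-preserving homotopy equivalence used by Wahl with Definition \ref{homotopiesofomor}; if the two differ, an intermediate comparison of signature classes under the two equivalences will be needed before rational injectivity can be invoked.
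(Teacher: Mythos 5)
Your overall strategy is the paper's: realize the signature class in $K_*^{\rm geo}(B\phi)$, note that pairing with $(f,g)^*\nu$ recovers the higher signatures, assemble, use Wahl's theorem to get a vanishing statement on the analytic side, and pull back through rational injectivity. (The relative Chern character of Part II is not needed; the identity $\mathrm{sign}_\nu(W)=\langle \mathrm{Sign}_W,(f,g)^*\nu\rangle$ suffices.) The gap is in how you invoke Wahl. The result \cite[Theorem 8.4]{WahlSurSet} is not the statement that ``the analytic signature class in $K_{*+1}(C_\phi)$ is an orientation-preserving homotopy invariant'' --- that is essentially what you are trying to prove. What Wahl proves is a vanishing theorem for the higher Atiyah--Patodi--Singer index of the signature operator on the glued manifold $W\cup -W'$, with boundary conditions induced by a homotopy equivalence that is of product type near the boundary. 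To apply it you must first identify the difference
\[
x=\mu_{\rm geo}^\phi\left[(f,g)_*\mathrm{Sign}_W\right]-\mu_{\rm geo}^\phi\left[(f',g')_*\mathrm{Sign}_{W'}\right]\in K_*^{\rm geo}(pt;\phi)
\]
with the image under $\Phi_{\rm cone}^{-1}\circ j_*$ of such an APS-index. This is exactly the ``defect term of APS type'' you flag, and it does not take care of itself: under $\Phi_{\rm cone}$ a general assembled class has both a Cayley-transform contribution supported on the boundary and an APS-index contribution, and only the latter is covered by Wahl.

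The two missing ingredients are: (i) Proposition \ref{produhom}, which arranges the homotopy equivalence of pairs to be of product type near the boundary --- this is a hypothesis of Wahl's theorem and is the correct resolution of your ``secondary concern''; and (ii) the observation that, by Kasparov's homotopy invariance of higher signatures of the \emph{closed} manifolds $\partial W\simeq\partial W'$, the Atiyah--Singer index of the boundary signature operator twisted by $g^*(\mathcal{L}_{B\Gamma_1})$ for the difference cycle vanishes in $K_*(C^*(\Gamma_1))$. Point (ii) is precisely the hypothesis of Lemma \ref{thelemmaformerlyknownasproposition3.12}; it allows one to choose the trivializing operator of the form $\alpha_*\phi_*A'$, which kills the Cayley-transform term and yields $x=\Phi_{\rm cone}^{-1}(j_*\,\mathrm{ind}_{APS}(D_{\rm sign}^{W\cup -W'},(\alpha\cup\alpha')\phi_*A))$. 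Only at that point does \cite[Theorem 8.4]{WahlSurSet} apply and give $x=0$. Without (i) and (ii) your appeal to Wahl does not connect to the class you need to control.
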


In the formulation of Theorem \ref{introhomotopyandnovi} we use the relative cohomology group $H^*(B\phi)$. For a short introduction to relative cohomology groups see Subsection \ref{subsecononen} on page \pageref{relcohompairfunc}. A key feature is that for $\nu\in H^*(B\phi)$, the class $(f,g)^*\nu$ can be represented by a compactly supported de Rham cohomology class on the interior $W^\circ$. The relative higher signatures contain no boundary contribution, in contrast to Atiyah-Patodi-Singer's formula for the signature of a manifold with boundary (see \cite[Theorem 4.14]{APSI}). The conceptual reason for this difference is that the compatibility condition $f|_{\partial W}=B\phi\circ g$ ensures that the boundary contributions are cancelled in the relative pairing.

\subsubsection*{Compatibility with the analytic map} With isomorphisms defined at the level of the groups as in Theorem \ref{firstthm}, we would like to compare the assembly maps. In particular, even without the full details of the definition of relative assembly using localization algebras, the reader would likely agree that asking whether the following diagram commutes is a natural question:
\begin{equation}
\label{commdiaformu}
\begin{CD}
K^{\textnormal{geo}}_*(B \phi) @>\mu_{\textnormal{geo}}^\phi >> K^{\textnormal{geo}}_*(pt;\phi)  \\
@V\mathrm{ind}_L^{\rm rel} VV @VV\Phi_{{\rm cone}}V   \\
K_{*+1}(C_{B \phi}) @>\mu_{CWY} >> K_{*+1}(C_{\psi}) \\
\end{CD}
\end{equation}
The horizontal maps are the assembly maps (defined respectively in Sections \ref{CWYmap} and \ref{GeoMap}) and the vertical maps are the isomorphisms defined respectively in Theorem \ref{relaassonkhom} and Equation \eqref{definofphicone} in Lemma \ref{cyctoclasses}.

In a previous version of this paper, we claimed the diagram \eqref{commdiaformu} commutes in general. However, there was a gap in the proof. We still believe this should hold in general, but since this question is not central to the applications we have in mind we only prove commutativity with additional hypotheses, see Theorem \ref{comDiaGeoVsCWY}. It remains an interesting open question whether these additional hypotheses can be removed. 

\subsection{Notation}
\label{notationseciont}
We use the following notation, some of which has already been introduced. The reader may wish to skip this section and return to it as needed.

We will consider a continuous map $h:Y\to X$. Quite often this map will be the inclusion $Y \subseteq X$ associated with a finite $CW$-pair. We use $\Gamma_1$ and $\Gamma_2$ to denote finitely generated discrete groups and $\phi:\Gamma_1 \rightarrow \Gamma_2$ denotes a group homomorphism between these groups. The classifying space of $\Gamma_i$ is denoted by $B\Gamma_i$. We also let $\phi$ denote the $*$-homomorphism induced from the group homomorphism $\phi$; it is a $*$-homomorphism from the maximal group $C^*$-algebra $C^*(\Gamma_1)$ of $\Gamma_1$ to the maximal group $C^*$-algebra $C^*(\Gamma_2)$ of $\Gamma_2$. The mapping cone of $\phi$ is the $C^*$-algebra: 
$$C_{\phi}:=\{ (f, a) \in   C_0((0,1],C^*(\Gamma_2))\oplus C^*(\Gamma_1)\: | \: f(1)= \phi(a) \},$$ 
where $C^*(\Gamma)$ denotes the maximal $C^*$-algebra of a group $\Gamma$. We note that this convention for the mapping cone is different than the one used in \cite{DeeRZ, DeeMappingCone} where the interval $[0,1)$ is used and also that this construction can be applied to any unital $*$-homomorphism. If $A$ is $C^*$-algebra, then the suspension of A is the $C^*$-algebra $SA:=C_0(\field{R},A)$. A representation of $A$ is called standard if it is faithful and no nonzero operator is represented by a compact operator. If $A$ is unital, then an $A$-bundle refers to a locally trivial bundle with fibers given by finitely generated, projective Hilbert $A$-modules. An example of such a bundle is the Mischchenko bundle, $\mathcal{L}_{B\Gamma}= E\Gamma \times_{\Gamma} C^*(\Gamma)$ from Equation \eqref{mishdef}. 

There is a continuous map $B\phi: B\Gamma_1 \rightarrow B\Gamma_2$ implementing $\phi$ at the level of homotopy groups. The map $B\phi$ is only unique up to homotopy, we fix one choice of $B\phi$. Furthermore, we assume that $B\Gamma_1$ and $B\Gamma_2$ are locally finite $CW$-complexes. The universal property for $E\Gamma_2$ produces a $\Gamma_2$-equivariant continuous map $\Gamma_2\times_{\Gamma_1}E\Gamma_1\to E\Gamma_2$ and by composing with the $\Gamma_1$-equivariant map $E\Gamma_1\to \Gamma_2\times_{\Gamma_1}E\Gamma_1$, $x\mapsto [1_{\Gamma_2},x]$, we can assume that $B\phi$ lifts to a $\Gamma_1$-equivariant mapping $E\phi:E\Gamma_1\to E\Gamma_2$. A morphism between two continuous mappings $h:Y\to X$ and $h':Y'\to X'$ is a pair $(f,g):[h:Y\to X]\to [h':Y'\to X']$ of mappings $f:X\to X'$ and $g:Y\to Y'$ such that $f\circ h=h'\circ g$.

The theory of localization algebras and assembly requires some knowledge of asymptotic morphisms, see \cite{connesbook, conneshigson,Yu} for details. We will also use the following notation when considering a manifold with boundary: if $W$ is a Riemannian manifold with boundary, the inclusion is denoted by $i:\partial W\to W$ and the interior by $W^\circ$. When we wish to emphasize the presence of a boundary, we write $\overline{W}=W^\circ\cup\partial W$. Furthermore, we assume all geometric structures are of product type near the boundary and identify a collar neighbourhood of the boundary with $(0,1]\times \partial W$. The boundary $\partial W$ will be identified with $\{1\}\times \partial W\subseteq \overline{W}$. Let $\overline{W}_R:=\overline{W}\cup_{\partial W}[1,R]\times \partial W$, so $\overline{W}_1=\overline{W}$. We also write $W_\infty:=\overline{W}\cup_{\partial W}[1,\infty)\times \partial W$. We equipp $\overline{W}_R$ and $W_\infty$ with the cylindrical metric on the attached cylinder. If $W$ has a spin$^c$-structure, we let $S_W\to W$ denote the associated Clifford bundle of complex spinors.

\section{The relative model of Chang, Weinberger, and Yu}
\label{CWYmap}

We begin by recalling some notation and results from the literature on localization algebras, assembly and localized indices. The results in this section can be found in, or readily deduced from, the work of Yu with coauthors \cite{CWY,gongwangyu,willettyu12,Yu} and Qiao-Roe \cite{qr10}. The careful reader notes that the methods of \cite{qr10} apply to the reduced $C^*$-norm, but the recent work \cite{SchickSeyedhosseini} shows that the same methods apply when using the maximal norm.

\subsection{Localization algebras and assembly} 
\label{localsubs}
Yu defines the assembly map using localization algebras. We give a quick review of this theory; the reader is directed to \cite{Yu} for the full details. Let $(Z,\mathrm{d}_Z)$ be a second countable proper metric space. For purposes that will be clearer on the next page, we always assume that $(Z,\mathrm{d}_Z)$ has \emph{bounded geometry}
\footnote{Following \cite{gongwangyu,CWY}, we say that $Z$ has bounded geometry if there is a countable subset $Z_0\subseteq Z$ such that for some $c>0$, $\mathrm{d}_Z(x,Z_0)\leq c$ for all $x\in Z$ and for any $r>0$ there is an $N>0$ such that $\#\{x\in Z_0: \mathrm{d}_Z(x,x_0)<r\}\leq N$ for all $x_0\in Z_0$.}. 
We fix a representation, $\rho$, of $C_0(Z)$ as operators on a Hilbert space $\mathcal{H}_Z$. We tacitly assume that $\rho$ is a standard representation, that is, $\rho$ is faithful and $\rho(C_0(Z))\cap \mathbbm{K}(\mathcal{H}_Z)=0$.
\begin{itemize}
\item An operator $T$ on $\mathcal{H}_Z$ is said to be locally compact if $\rho(a)T$ and $T\rho(a)$ are compact for any $a\in C_0(Z)$.
\item If $T$ is an operator on $\mathcal{H}_Z$ and there exists an $R>0$ such that $aTb=0$ whenever $\mathrm{d}_Z(\mathrm{supp}(a),\mathrm{supp}(b))\geq R$, we say that $T$ has finite propagation. We let $\mathrm{prop}(T)$ denote the infimum of the set of such $R$; we call  $\mathrm{prop}(T)$ the propagation of $T$.
\item The Roe algebra, $\mathbb{R}_{\rho}(Z)\subseteq \mathbbm{B}(\mathcal{H}_Z)$, is the $*$-algebra of operators that are locally compact and have finite propagation. 
\end{itemize}
There are several choices of $C^*$-norms on $\mathbb{R}_\rho(Z)$ and different choices can lead to different $C^*$-algebras. Unless otherwise stated, we assume that $Z$ has bounded geometry and use the maximal completion of $\mathbb{R}_\rho(Z)$ (see \cite[Section 3]{gongwangyu}), we denote this by $C^*(Z)$. When $Z$ has bounded geometry, the maximal $C^*$-norm is well defined on $\mathbb{R}_\rho(Z)$ by \cite[Lemma 3.4, Item 4.4]{gongwangyu}. The assumption that $\rho$ is a standard representation guarantees that the $K$-theory of $C^*(Z)$ is independent of $\rho$ (see \cite[Section 4]{higroeyu}). Furthermore, the localization algebra(s) of Yu, $C^*_L(Z)$, is a suitable completion of the $*$-algebra $\mathbb{R}_{L,\rho}(Z)$ of the functions of the following form: $g: [1, \infty) \rightarrow \mathbb{R}_{\rho}(Z)$ such that
\begin{enumerate}
\item $g$ is uniformly bounded and uniformly continuous;
\item $\mathrm{prop}(g(s))\to 0$ as $s$ tends to infinity.
\end{enumerate}
Again, the $C^*$-norm used for the completion and in Item (1) above is important (i.e., lead to different algebras); we use the maximal $C^*$-norm unless otherwise stated. To be precise, the norm on $C^*_L(Z)$ is that induced from the $C^*$-algebra $C_{\textnormal{un}}([1,\infty),C^*(Z))$ of uniformly continuous functions $[1,\infty)\to C^*(Z)$. The choice of representation $\rho$ is less important as it does not affect the isomorphism class of relevant $K$-theory groups when we assume $\rho$ to be a standard representation, see \cite[Section 3]{Yu}. Our convention differs from that in \cite{CWY,xieyu} where the interval $[0,\infty)$ is used to define the localization algebras. 

Let $X$ be a finite $CW$-complex and denote its universal cover by $\tilde{X}$. We choose a word metric on $\pi_1(X)$. We assume that $X$ is equipped with a metric defining its topology and that $\tilde{X}$ is equipped with a metric making the $\pi_1(X)$-action isometric and each orbit bi-Lipschitz equivalent to $\pi_1(X)$. The action of the deck transformations $\pi_1(X)$ on $\tilde{X}$ defines an action of $\pi_1(X)$ as $*$-automorphisms on $\mathbb{R}_\rho(\tilde{X})$ and $\mathbb{R}_{L,\rho}(\tilde{X})$. The construction discussed in the previous paragraph can be applied in a number of ways: we can form the algebras $C^*_L(X)$, $C^*(\tilde{X})$, $C^*(\tilde{X})^{\pi_1(X)}$, and $C^*_L(\tilde{X})^{\pi_1(X)}$. The last two $C^*$-algebras are obtained using the $*$-algebra of $\pi_1(X)$-invariant elements in $\mathbb{R}_\rho(\tilde{X})$ and $\mathbb{R}_{L,\rho}(\tilde{X})$, respectively. That is, $C^*(\tilde{X})^{\pi_1(X)}$ is defined as the maximal $C^*$-completion of $\mathbb{R}_\rho(\tilde{X})^{\pi_1(X)}$ and $C^*_L(\tilde{X})^{\pi_1(X)}$ as the completion of $\mathbb{R}_{L,\rho}(\tilde{X})^{\pi_1(X)}$ in  $C_{\textnormal{un}}([1,\infty),C^*(\tilde{X})^{\pi_1(X)})$. 
The maximal $C^*$-norm on $\mathbb{R}_\rho(\tilde{X})^{\pi_1(X)}$ is well defined by \cite[Lemma 4.13]{gongwangyu} since there is a $\Gamma$-invariant discrete $\tilde{X}_0\subseteq \tilde{X}$ of bounded geometry such that for some $c>0$, $\mathrm{d}(x,\tilde{X}_0)\leq c$ for all $x\in \tilde{X}$.

Since we are working in the relative context, maps between localization algebras must also be considered. If $h: Y \rightarrow X$ is a coarse Lipschitz map, there is an induced map at the localization algebra level by \cite[Lemma 3.4]{Yu}. By an abuse of notation, we denote also said $*$-homomorphism by $h:C^*_L(Y)\to C^*_L(X)$.

\begin{remark}
As is observed in \cite[Section 3.2]{zeid14}, the functoriality holds in larger generality. If $\Gamma$ is a discrete group acting on $X$ and $Y$, making $h:Y\to X$ equivariant, there is an associated $*$-homomorphism $h:C^*_L(Y)^\Gamma\to C^*_L(X)^\Gamma$ of equivariant localization algebras if $h$ is a uniformly continuous coarse mapping. 
\end{remark}

The Lipschitz case suffices for our purposes: if $\mathrm{d}_X$ and $\mathrm{d}_Y$ are metrics defining the topology on $X$ and $Y$, respectively, $h:(Y,\mathrm{d}_Y+h^*\mathrm{d}_X)\to (X,\mathrm{d}_X)$ is a Lipschitz mapping and $\mathrm{d}_Y+h^*\mathrm{d}_X$ defines the same topology as $\mathrm{d}_Y$ because $h$ is continuous. We consider the mapping cone associated with $h:C^*_L(Y)\to C^*_L(X)$; it is denoted by $C_h$. If $Y$ is a compact space and $h:Y\to X$ is continuous we use the notation 
\begin{equation}
\label{cwynotation}
K_*^{\textnormal{CWY}}(Y):=K_*(C^*_L(Y))\quad\mbox{and}\quad K_*^{\textnormal{CWY}}(h):=K_*(SC_h).
\end{equation}
(recall that $SA:=C_0(\field{R},A)$ for a $C^*$-algebra $A$).

\begin{prop}
\label{findepocwy}
Let $h:Y\to X$ be a continuous mapping of compact spaces. The group $K_*^{CWY}(h)$ is up to canonical isomorphisms independent of the choice of metrics on $X$ and $Y$ and the choice of standard representations $\rho_X$ and $\rho_Y$ of $C_0(X)$ and $C_0(Y)$, respectively. 
\end{prop}

\begin{proof}
For $j=1,2$, suppose that $\mathrm{d}_{X,j}$ and $\mathrm{d}_{Y,j}$ are metrics on $X$ and $Y$, respectively, such that $h:(Y,\mathrm{d}_{Y,j})\to (X,\mathrm{d}_{X,j})$ is Lipschitz continuous. Define the metrics $\mathrm{d}_{X,3}:=\mathrm{d}_{X,1}+\mathrm{d}_{X,2}$ and $\mathrm{d}_{Y,3}:=\mathrm{d}_{Y,1}+\mathrm{d}_{Y,2}$. The map $h:(Y,\mathrm{d}_{Y,3})\to (X,\mathrm{d}_{X,3})$ and the inclusion maps $(Y,\mathrm{d}_{Y,3})\to (Y,\mathrm{d}_{Y,j})$ and $ (X,\mathrm{d}_{X,3})\to  (X,\mathrm{d}_{X,j})$, $j=1,2$, are by construction Lipschitz continuous. 

For fixed representations $\rho_X$ and $\rho_Y$ of $C_0(X)$ and $C_0(Y)$, respectively, we have inclusions $\mathbb{R}_{L,\rho_X}(X,\mathrm{d}_{X,3})\subseteq \mathbb{R}_{L,\rho_X}(X,\mathrm{d}_{X,j})$ and $\mathbb{R}_{L,\rho_Y}(Y,\mathrm{d}_{Y,3})\subseteq \mathbb{R}_{L,\rho_Y}(Y,\mathrm{d}_{Y,j})$ for $j=1,2$. Here we include the metric in the notation for the localization algebra for notational clarity. We therefore arrive at a commuting diagram (for $j=1,2$)
$$\begin{CD}
C^*_{L,\rho_Y}(Y,\mathrm{d}_{Y,3})@>h >> C^*_{L,\rho_X}(X,\mathrm{d}_{X,3})   \\
@VVV @VVV   \\
C^*_{L,\rho_Y}(Y,\mathrm{d}_{Y,j}) @>h >> C^*_{L,\rho_X}(X,\mathrm{d}_{X,j}).  \\
\end{CD}$$
We abuse the notation and write $h$ for both maps on the two different localization algebras. The vertical arrows are induced by the inclusion maps mentioned above and in turn induce isomorphisms on $K$-theory by \cite[Page 313]{Yu} (see also \cite[Proposition 3.2]{qr10}). We therefore arrive at a commuting diagram with exact rows for $j=1,2$:
$$\begin{CD}
0@>>> SC^*_{L,\rho_X}(X,\mathrm{d}_{X,3}) @>>> C_h @>>> C^*_{L,\rho_Y}(Y,\mathrm{d}_{Y,3}) @>>>0 \\
@.  @VVV  @VVV  @VV V \\
0@>>> SC^*_{L,\rho_X}(X,\mathrm{d}_{X,j}) @>>> C_h @>>> C^*_{L,\rho_Y}(Y,\mathrm{d}_{Y,j}) @>>>0 \\\\
\end{CD}$$
Since the outer most vertical arrows induce isomorphisms on $K$-theory, the same holds for the middle vertical arrow and we deduce that $K_*(C_h)$ (up to canonical isomorphism) does not depend on the choice of metrics on $X$ and $Y$. A similar argument, again using \cite[Page 313]{Yu} (see also \cite[Proposition 3.2]{qr10}), shows that $K_*(C_h)$ (up to canonical isomorphism) does not depend on the choice of standard representations $\rho_X$ and $\rho_Y$ of $C_0(X)$ and $C_0(Y)$, respectively.
\end{proof}

\begin{define}
\label{homotopiesofomor}
Let $[h:Y\to X]$ and $[h':Y'\to X]$ be continuous mappings. 
\begin{itemize}
\item We say that two morphisms $(f_j,g_j):[h:Y\to X]\to[h':Y'\to X'] $, $j=0,1$, are homotopic if there is a mapping 
$$(F,G):\left[\mathrm{id}_{[0,1]}\times h:[0,1]\times Y\to [0,1]\times X\right]\to \left[h':Y'\to X'\right],$$ 
such that $F(j,\cdot)=f_j$ and $G(j,\cdot)=g_j$ for $j=0,1$. 
\item If there are morphisms $u:[h:Y\to X]\to [h':Y'\to X']$ and $v: [h':Y'\to X']\to [h:Y\to X]$ such that $vu$ and $uv$ are homotopic to the identity mapping on $[h:Y\to X]$ and $[h':Y'\to X']$, respectively, we say that $[h:Y\to X]$ and $[h':Y'\to X']$ are homotopy equivalent.
\end{itemize}
\end{define}

Let $h:Y\to X$ be a continuous mapping of compact spaces. We define $\hat{X}$ as the mapping cylinder $[0,1]\times Y\cup_h \{0\}\times X$ and $\hat{h}:Y\to \hat{X}$ as the inclusion mapping $\hat{h}(y):=(0,y)$. The inclusion $X=X\times \{0\}\hookrightarrow \hat{X}$ fits into a commuting diagram
\begin{equation}
\label{cdwithxxhat}
\begin{CD}
Y@>h >> X   \\
@| @VVV   \\
Y @>\hat{h} >> \hat{X}.  \\
\end{CD}
\end{equation}
The projection mapping $\hat{X}\to X$ is a homotopy inverse of the inclusion $X=X\times \{0\}\hookrightarrow \hat{X}$. After some shorter considerations, one deduces the following result.

\begin{prop}
\label{homotopiesofomorcwy}
The maps in the diagram \eqref{cdwithxxhat} defines a homotopy equivalence between $[h:Y\to X]$ and $[\hat{h}:Y\to \hat{X}]$. In particular, for any compact pair $[h:Y\to X]$ there is a canonically homotopy equivalent pair $[\hat{h}:Y\to \hat{X}]$ where $\hat{h}$ is an inclusion and a canonical isomorphism $K_*^{CWY}(h)\cong K_*^{CWY}(\hat{h})$.
\end{prop}

We now turn to proving functoriality of the group $K_*^{CWY}(h)$. 

\begin{prop}
\label{functorcwy}
The group $K_*^{CWY}(h)$ depends functorially on $h$ in the following sense: if $(f,g):[h:Y\to X]\to [h':Y'\to X']$ is a continuous map of compact pairs, there is a canonically associated $(f,g)_*:K_*^{CWY}(h)\to K_*^{CWY}(h')$ and this association is (up to canonical isomorphisms) functorial for compositions. 
\end{prop}

We note that the association $(f,g)\mapsto (f,g)_*$ is not functorial in the strict sense.

\begin{proof}
We can by Proposition \ref{homotopiesofomorcwy} assume that $[h:Y\to X]$ and $[h':Y'\to X']$ are inclusion mappings. In particular, this assumption implicitly implies that $g=f|_Y$. By Proposition \ref{findepocwy} the groups $K_*^{CWY}(h)$ and $K_*^{CWY}(h')$ are independent on choice of standard representations, and we choose them as follows. Following the construction in \cite[Definition 2.5]{CWY}, we pick countable dense subsets $Z_Y\subseteq Y$, $Z_X\subseteq X$, $Z_{Y'}\subseteq Y'$ and $Z_{X'}\subseteq X'$ such that $Z_Y\subseteq Z_X$, $Z_{Y'}\subseteq Z_{X'}$, $f(Z_X)\subseteq Z_{X'}$ and $f(Z_Y)\subseteq Z_{Y'}$. Define $\rho_X$ as the representation defined from pointwise multiplication on $\ell^2(Z_X\times \Z)$. The representations $\rho_Y$, $\rho_{Y'}$ and $\rho_{X'}$ are defined analogously. The $*$-homomorphisms 
$$h:C^*_{L,\rho_Y}(Y)\to C^*_{L,\rho_X}(X)\quad\mbox{and}\quad h':C^*_{L,\rho_{Y'}}(Y')\to C^*_{L,\rho_{X'}}(X'),$$ 
are defined from the isometries induced from inclusion of the relevant sets. In particular, the maps $f$ and $g$ induces partial isometries $\ell^2(Z_X\times \Z)\to \ell^2(Z_{X'}\times \Z)$ and $\ell^2(Z_Y\times \Z)\to \ell^2(Z_{Y'}\times \Z)$ that defines $*$-homomorphisms $f$ and $g$ fitting into a commuting diagram 
$$\begin{CD}
C^*_{L,\rho_Y}(Y)@>h >> C^*_{L,\rho_X}(X)   \\
@VgVV @VVfV   \\
C^*_{L,\rho_{Y'}}(Y') @>h' >> C^*_{L,\rho_{X'}}(X').  \\
\end{CD}$$
We deduce that $f$ and $g$ defines a $*$-homomorphism $(f,g):C_h\to C_{h'}$ that fits into a commuting diagram
$$\begin{CD}
0@>>> SC^*_{L,\rho_X}(X) @>>> C_h @>>> C^*_{L,\rho_Y}(Y)@>>>0 \\
@.  @VSfVV  @V(f,g)VV  @VgVV \\
0@>>> SC^*_{L,\rho_{X'}}(X')@>>> C_{h'} @>>> C^*_{L,\rho_{Y'}}(Y') @>>>0 \\
\end{CD}$$
The map $(f,g)_*:K_*^{CWY}(h)\to K_*^{CWY}(h')$ is defined from the constructed $*$-homomorphism $(f,g)$. 

The association $(f,g)\mapsto (f,g)_*$ is clearly functorial in the sense that if if $(f,g):[h:Y\to X]\to [h':Y'\to X']$ and $(f',g'):[h:Y'\to X']\to [h'':Y''\to X'']$ are continuous maps of compact pairs then we can find models for the localization algebras as above giving rise to $*$-homomorphisms $(f,g):C_h\to C_{h'}$, $(f',g'):C_{h'}\to C_{h''}$ and $(f'f,g'g):C_h\to C_{h''}$ such that $(f'f,g'g)=(f',g')\circ (f,g)$.
\end{proof}

\begin{define}
\label{defcwynotation}
For locally compact spaces $X$ and $Y$, and a Lipschitz continuous $h:Y\to X$ we define 
\begin{align*}
K_*^{\textnormal{CWY}}(Y)&:=\varinjlim_{Y'\subseteq Y\mbox{\tiny compact}}K_*^{\textnormal{CWY}}(Y') \quad\mbox{and}\\ 
K_*^{\textnormal{CWY}}(h)&:=\varinjlim_{ X'\subseteq X,Y'\subseteq h^{-1}(X'), \mbox{\tiny compact}}K_*^{\textnormal{CWY}}(h|:Y'\to X').
\end{align*}
The compact case was defined in Equation \eqref{cwynotation}.
\end{define}

The definitions make sense because of Proposition \ref{functorcwy}. 

\begin{prop}
\label{cwyexact}
Let $h:Y\to X$ be a Lipschitz continuous mapping of metric locally compact spaces. The group $K_*^{\textnormal{CWY}}(h)$ fits into a six term exact sequence 
\begin{center}
$\begin{CD}
K_*^{\textnormal{CWY}}(Y) @>h_*>> K_*^{\textnormal{CWY}}(X) @>>> K_*^{\textnormal{CWY}}(h) \\
@AAA @.  @VVV  \\
K_{*-1}^{\textnormal{CWY}}(h) @<<<K_{*-1}^{\textnormal{CWY}}(X) @<h_*<< K_{*-1}^{\textnormal{CWY}}(Y)  \\
\end{CD}$
\end{center}
This six term exact sequence is compatible with the functoriality constructed in the proof of Proposition \ref{functorcwy}.
\end{prop}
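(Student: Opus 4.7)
The plan is to read off the six-term exact sequence from the mapping cone short exact sequence of $C^*$-algebras
\[0\to SC^*_L(X)\to C_h\to C^*_L(Y)\to 0,\]
passing to colimits for the locally compact case and using naturality for functoriality.

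For compact $Y$, I would write down the six-term exact sequence in $K$-theory associated to this short exact sequence. After using Bott periodicity to rewrite $K_*(SC^*_L(X))\cong K_{*-1}(C^*_L(X))=K_{*-1}^{\textnormal{CWY}}(X)$ and to identify $K_{*-1}(C_h)\cong K_*(SC_h)=K_*^{\textnormal{CWY}}(h)$, the resulting sequence takes precisely the form claimed. The one point requiring care is the identification of the $K$-theoretic boundary homomorphism with $h_*$: this is the classical statement that the connecting map in the six-term sequence of a mapping cone short exact sequence coincides, after the Bott identification $K_*(SC^*_L(X))\cong K_{*-1}^{\textnormal{CWY}}(X)$, with the map induced by the underlying $*$-homomorphism $h\colon C^*_L(Y)\to C^*_L(X)$. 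I view this as the main technical input of the proof, though it is a standard property of mapping cones.

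The general case with $Y$ locally compact reduces to the compact case via Definition \ref{defcwynotation}. Since $K_*^{\textnormal{CWY}}(Y)$, $K_*^{\textnormal{CWY}}(X)$, and $K_*^{\textnormal{CWY}}(h)$ are all defined as directed colimits over compact $Y'\subseteq Y$ (for $K_*^{\textnormal{CWY}}(X)$ one uses cofinality of images $h(Y')\subseteq X$ together with independence of the metric from Proposition \ref{functorcwy}), and directed colimits of abelian groups preserve exactness, the colimit of the compact-case six-term exact sequences is again exact and produces the claimed sequence for $h$. For compatibility with functoriality, a morphism $(f,g)\colon[h\colon Y\to X]\to[h'\colon Y'\to X']$ induces the commutative diagram \eqref{functorrem} at the level of mapping cone short exact sequences by the construction in the proof of Proposition \ref{functorcwy}; applying naturality of the six-term $K$-theory sequence to this commutative diagram yields the desired compatibility.
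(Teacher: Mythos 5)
Your proposal is correct and is exactly the argument the paper intends: the authors' entire proof is the single sentence that the result ``follows from the construction and six term exact sequences in $K$-theory,'' i.e.\ precisely your route via the mapping cone extension $0\to SC^*_L(X)\to C_h\to C^*_L(Y)\to 0$, Bott periodicity, the standard identification of the connecting map with $h_*$, colimits for locally compact $Y$, and naturality of the six-term sequence against the diagram \eqref{functorrem} for functoriality. (One small quibble with your colimit step: since Definition \ref{defcwynotation} colimits only over compact $Y'\subseteq Y$ while the cone of $h|_{Y'}$ still targets $C^*_L(X)$ for the full $X$, the middle terms of the sequences being colimited are the constant group $K_*(C^*_L(X))$, so the point to address is reconciling this with the colimit defining $K_*^{\textnormal{CWY}}(X)$ rather than cofinality of the images $h(Y')$ --- a subtlety the paper itself also leaves implicit.)
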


\begin{proof}
For any compact $X'\subseteq X$ and a compact $Y'\subseteq h^{-1}(X')$ we have a pair $h|:Y'\to X'$ of compact pairs arriving at a short exact sequence
$$0\to SC^*_{L}(X')\to C_{h|:Y'\to X'} \to C^*_{L}(Y') \to 0.$$
After applying $K$-theory, we arrive at a six term exact sequence
\begin{center}
$\begin{CD}
K_*^{\textnormal{CWY}}(Y') @>h_*>> K_*^{\textnormal{CWY}}(X') @>>> K_*^{\textnormal{CWY}}(h|:Y'\to X') \\
@AAA @.  @VVV  \\
K_{*-1}^{\textnormal{CWY}}(h|:Y'\to X') @<<<K_{*-1}^{\textnormal{CWY}}(X') @<h_*<< K_{*-1}^{\textnormal{CWY}}(Y')  \\
\end{CD}$
\end{center}
The proposition follows from the fact that taking a direct limit defines an exact functor on the category of directed systems of abelian groups.
\end{proof}

\begin{remark}
\label{equi}
Returning to \cite[Definition 2.2]{CWY}, we also have the equivariant localization algebra $C^*_{L, {\rm max}}(\tilde{X})^{\pi_1(X)}$ and the equivariant Roe algebra $C^*_{{\rm max}}(\tilde{X})^{\pi_1(X)}$ defined in Subsection \ref{localsubs}. As above, we drop the ``max" from the notation. If $X$ is compact, then, since $\tilde{X}$ is equivariantly coarsely equivalent to $\pi_1(X)$ and \cite[Remark 3.14]{gongwangyu}, the algebra $C^*(\tilde{X})^{\pi_1(X)}$ is Morita equivalent to the maximal $C^*$-algebra of $\pi_1(X)$. 
\end{remark}

\begin{prop}(see \cite[page 7]{CWY})
\label{equiiso}
Let $X$ be a compact space. For any Galois covering $\Gamma\to \tilde{X}\to X$, there is an explicitly given asymptotic morphism $(\phi_s)_{s\in [1,\infty)}: \mathbb{R}_L(X) \rightarrow \mathbb{R}_L(\tilde{X})^{\Gamma}$ (see \cite[Section 2]{CWY}) which is continuous in the maximal $C^*$-norm for $s>>1$ inducing an isomorphism on $K$-theory:
\[(\phi_s)_*: K_*(C^*_L(X))\xrightarrow{\sim} K_*(C^*_L(\tilde{X})^{\Gamma}).\]
\end{prop}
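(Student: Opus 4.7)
The plan is to build $\phi_s$ by a local lifting and equivariant averaging procedure and then to establish the $K$-theory isomorphism via a Mayer-Vietoris reduction to trivially-covered patches, essentially following Yu's strategy.

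For the construction, I would begin by choosing a locally finite open cover $\{U_i\}$ of $X$ with diameters bounded by some $\epsilon > 0$ and over which the covering $\tilde{X} \to X$ trivializes, together with a subordinate partition of unity $\{\psi_i\}$ and fixed lifts $\tilde{U}_i \subset \tilde{X}$ with lifted cutoffs $\tilde{\psi}_i$. For $g \in \mathbb{R}_{L,\rho}(X)$ and $s$ large enough that $\mathrm{prop}(g(s)) < \epsilon$, each piece $\psi_i^{1/2} g(s) \psi_j^{1/2}$ is supported in a set whose preimage in $\tilde{X}$ splits into $\Gamma$-translates of isometric copies, so it can be lifted unambiguously to an operator near $\tilde{U}_i \cup \tilde{U}_j$ and then averaged over $\Gamma$. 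Summing these lifts defines $\phi_s(g) \in \mathbb{R}_{L,\rho}(\tilde{X})^\Gamma$. Asymptotic multiplicativity, linearity, and $*$-compatibility follow from the near-commutation of cutoffs with operators of vanishing propagation; the main subtlety is to control the lifts in the maximal norm, for which one uses the characterisation of the maximal Roe algebra from \cite[Section 3]{gongwangyu}.

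For the $K$-theory isomorphism, I would argue by Mayer-Vietoris. Both $K_*(C^*_L(X))$ and $K_*(C^*_L(\tilde{X})^\Gamma)$ admit Mayer-Vietoris long exact sequences with respect to open decompositions of $X$, and $(\phi_s)_*$ is compatible with these by naturality of the construction. Induction on the cells of a $CW$-structure on $X$ then reduces the claim to the case where $X$ is contractible and the cover is globally trivial; there $\tilde{X} \cong \Gamma \times X$, and $C^*_L(\Gamma \times X)^\Gamma$ is Morita equivalent to $C^*_L(X)$ as in Remark \ref{equi}, with $(\phi_s)_*$ reducing to the identity on $K$-theory.

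The main obstacle is maintaining maximal-norm control throughout the lifting construction: in the reduced setting the analogous statement is essentially formal, but the maximal completion requires one to verify that the lifted operators, built from $\Gamma$-many local pieces, still satisfy the universal bound defining the maximal norm. The Mayer-Vietoris step similarly needs compatibility of the maximal completions with open decompositions, which is where the technical input from \cite{gongwangyu} and \cite{qr10} is essential.
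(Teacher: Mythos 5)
Your outline matches the approach the paper relies on: the paper does not prove this proposition itself but defers entirely to \cite[Section 2, page 7]{CWY}, where the asymptotic morphism is built exactly as you describe --- lifting small-propagation operators locally over evenly covered sets and summing over deck transformations --- and the $K$-theory isomorphism is obtained by Yu's Mayer--Vietoris induction over skeleta, reducing to the evenly covered contractible case. Two cosmetic repairs: the exact decomposition of $g(s)$ should use $\psi_i\, g(s)\, \psi_j$ rather than $\psi_i^{1/2} g(s) \psi_j^{1/2}$ (only the former sums back to $g(s)$, the latter agreeing only asymptotically via equicontinuity of the partition of unity), and Remark \ref{equi} concerns the equivariant \emph{Roe} algebra rather than the localization algebra, so your base case needs the localization-algebra analogue of that Morita equivalence.
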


The construction of the asymptotic morphism $(\phi_s)_{s\in [1,\infty)}$ can be found in \cite[Definition 2.9]{CWY}. It is constructed in a model for localization algebras similar to that used in the proof of Proposition \ref{functorcwy}. The reader should note that $(\phi_s)_{s\in [1,\infty)}$ is only densely defined, leaving a discrepancy to the usual definition of asymptotic morphisms \cite{connesbook,conneshigson}. However, having a densely defined asymptotic morphism continuous in the ambient $C^*$-norm suffices to define maps on $K$-theory. We omit the details and refer the reader to \cite[Lemma 2.10]{CWY}.

\begin{remark}
The reader should beware of the possible confusion that could arise from $\phi:\Gamma_1\to \Gamma_2$ denoting a group homomorphism for a large portion of the paper and the notation $(\phi_s)_{s\in [1,\infty)}$ for the asymptotic morphism from Proposition \ref{equiiso}. The two objects are not relateed. The reason for this notation is that $\phi:\Gamma_1\to \Gamma_2$ follows the notation from \cite{DeeMappingCone} and $(\phi_s)_{s\in [1,\infty)}$ the notation from \cite{CWY}. 
\end{remark}

Combining the isomorphism induced from the asymptotic morphism in Proposition \ref{equiiso} with the Morita equivalence from Remark \ref{equiiso}, we can define the assembly map:

\begin{define}
Let $X$ be a compact space and $\Gamma\to \tilde{X}\to X$ a Galois covering. The assembly map $\mu_{\rm CWY}:K_*^{\rm CWY}(X)\to K_*(C^*(\Gamma))$ is defined as the composition 
$$K_*^{\rm CWY}(X)\xrightarrow{(\phi_s)_*}K_*(C^*_L(\tilde{X})^{\Gamma})\xrightarrow{{\rm ev}_1}K_*(C^*(\tilde{X})^{\Gamma})\cong K_*(C^*(\Gamma)),$$
where the last isomorphism comes from the Morita equivalence in Remark \ref{equi}.
\end{define}

\begin{prop}(cf. \cite[page 9]{CWY}) 
A Lipschitz continuous mapping $h:Y \to X$, for a compact space $Y$, induces $*$-homomorphisms $h_{C^*}:C^*(\tilde{Y})^{\pi_1(Y)}\to C^*(\tilde{X})^{\pi_1(X)}$ and $h_L:C^*_L(\tilde{Y})^{\pi_1(Y)}\to C^*_L(\tilde{X})^{\pi_1(X)}$ compatible with the asymptotic morphism from Proposition \ref{equiiso} in the sense that $(\phi_s^X\circ h)_{s\in [1,\infty)}\sim (h_L\circ \phi_s^Y)_{s\in [1,\infty)}$, where the superscript indicate the involved space and $\sim$ denotes asymptotic equivalence.
\end{prop}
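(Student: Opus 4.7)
The plan is to construct the $*$-homomorphisms $h_{C^*}$ and $h_L$ by conjugating with an equivariant isometry that covers a lift of $h$, and then to verify the asymptotic compatibility by comparing the explicit CWY construction on $X$ with its pullback along $h$.

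First, I would lift $h$ to a $\pi_1(Y)$-equivariant mapping $\tilde{h}:\tilde{Y}\to\tilde{X}$, where $\pi_1(Y)$ acts on $\tilde{X}$ through $\phi:=h_*$; the existence of such a lift is the universal property of the cover. Because $h$ is Lipschitz and the deck transformations act isometrically on both covers, $\tilde{h}$ is Lipschitz with the same constant $L$. Next, I would choose standard representations $\rho_Y$ of $C_0(\tilde{Y})$ on $\mathcal{H}_{\tilde{Y}}$ and $\rho_X$ of $C_0(\tilde{X})$ on $\mathcal{H}_{\tilde{X}}$ that are compatible with the group actions (for instance, the induced representations associated with the coverings). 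A standard Kasparov absorption argument then produces a $\pi_1(Y)$-equivariant isometry $V:\mathcal{H}_{\tilde{Y}}\to\mathcal{H}_{\tilde{X}}$ covering $\tilde{h}$, in the sense that $V\rho_Y(a\circ\tilde{h})=\rho_X(a)V$ for $a\in C_0(\tilde{X})$.

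With $V$ in hand, I define $h_{C^*}(T):=VTV^*$ on the pre-Roe algebra $\mathbb{R}_\rho(\tilde{Y})^{\pi_1(Y)}$. Because $\tilde{h}$ is $L$-Lipschitz, an operator of propagation $r$ is sent to one of propagation at most $Lr$, and the covering property of $V$ ensures that local compactness is preserved, while equivariance of $V$ forces the image to be $\pi_1(X)$-invariant. Since conjugation by $V$ is a $*$-homomorphism on the pre-Roe $*$-algebra, it extends continuously to the maximal completion, giving $h_{C^*}:C^*(\tilde{Y})^{\pi_1(Y)}\to C^*(\tilde{X})^{\pi_1(X)}$. The same construction applied pointwise in $s\in[1,\infty)$ yields $h_L:C^*_L(\tilde{Y})^{\pi_1(Y)}\to C^*_L(\tilde{X})^{\pi_1(X)}$: the inequality $\mathrm{prop}(Vg(s)V^*)\le L\,\mathrm{prop}(g(s))$ preserves the shrinking-propagation condition, while uniform boundedness and uniform continuity in $s$ are transparent.

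For the asymptotic compatibility $(\phi_s^X\circ h)\sim(h_L\circ\phi_s^Y)$, I would use the explicit construction of $\phi_s^Z:C^*_L(Z)\to C^*_L(\tilde{Z})^{\pi_1(Z)}$ from \cite[Section 2]{CWY} as built at scale $s$ from a partition of unity on $Z$ of diameter $O(1/s)$, together with equivariant lifts of supports to $\tilde{Z}$. Pulling back an $X$-partition of scale $1/s$ via the Lipschitz mapping $h$ yields a partition of $Y$ of scale at most $L/s$, and the covering property of $V$ identifies the equivariant lifts on $\tilde{X}$ with images under $\tilde{h}$ of the equivariant lifts on $\tilde{Y}$. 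Both $\phi_s^X\circ h$ and $h_L\circ\phi_s^Y$ are therefore sums of the same ``moved'' pieces differing at scale $s$ only by terms of propagation $O(1/s)$, which are asymptotically negligible.

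The main obstacle I anticipate is the bookkeeping in this last step: the definition of $\phi_s^Z$ involves choices of partitions, cutoffs and equivariant lifts, and the two morphisms $\phi_s^X\circ h$ and $h_L\circ\phi_s^Y$ are a priori built from incompatible choices. The key technical point, which I expect to reduce to the independence-of-choice arguments underlying \cite[Proposition~\ref{equiiso}]{CWY}, is that any two sets of such choices produce asymptotically equivalent morphisms, whence the apparent discrepancy vanishes as $s\to\infty$.
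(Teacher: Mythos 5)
Your construction of $h_{C^*}$ by conjugation with a $\pi_1(Y)$-equivariant isometry $V$ covering the lift $\tilde{h}$ has a genuine gap at the step ``equivariance of $V$ forces the image to be $\pi_1(X)$-invariant.'' The isometry $V$ only intertwines the $\pi_1(Y)$-action on $\mathcal{H}_{\tilde Y}$ with the action of $\phi(\pi_1(Y))$ on $\mathcal{H}_{\tilde X}$, so for $T$ a $\pi_1(Y)$-invariant operator one gets $g(VTV^*)g^{-1}=VTV^*$ only for $g\in\mathrm{im}(\phi)$; for $g\notin\mathrm{im}(\phi)$ there is no reason for invariance (already the range projection $VV^*$ is not $\pi_1(X)$-invariant when $\phi$ is not surjective). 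A second, independent failure occurs when $\ker\phi$ is infinite: the equivariant lift satisfies $\tilde{h}(\gamma y)=\tilde{h}(y)$ for $\gamma\in\ker\phi$, so $\tilde h$ collapses infinite orbits and is not proper; then $a\circ\tilde h$ is not compactly supported for $a\in C_c(\tilde X)$, and $\rho_X(a)VTV^*=V\rho_Y(a\circ\tilde h)TV^*$ need not be compact, i.e., local compactness is not preserved. So the conjugation map does not even land in $C^*(\tilde X)^{\pi_1(X)}$ in general. Repairing this requires an induction over coset representatives of $\mathrm{im}(\phi)$ in $\pi_1(X)$ (with convergence issues) and a separate treatment of the kernel, which is precisely what your argument omits.

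The route actually taken here (see Remark \ref{staisoon}) avoids these problems entirely: since $Y$ and $X$ are compact, the covers are equivariantly coarsely equivalent to the fundamental groups, and Roe's isomorphism \cite{Roe96} identifies $C^*(\tilde{Y})^{\pi_1(Y)}\cong C^*(\pi_1(Y))\otimes\mathcal{K}$ and likewise for $X$; one then defines $h_{C^*}$ as $\phi\otimes\iota$, where $\phi:C^*(\pi_1(Y))\to C^*(\pi_1(X))$ is the map induced on \emph{maximal} group $C^*$-algebras (which exists for an arbitrary group homomorphism, with no hypotheses on kernel or image) and $\iota:\mathcal{K}\to\mathcal{K}$ is a multiplicity-preserving $*$-homomorphism; the same recipe gives $h_L$ on localization algebras. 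Your outline for the asymptotic compatibility $(\phi_s^X\circ h)\sim(h_L\circ\phi_s^Y)$ — pulling back partitions of unity of scale $O(1/s)$ along the Lipschitz map and invoking independence of choices — is the right idea and matches the verification in \cite{CWY}, but it can only be carried out once $h_L$ has been correctly constructed as above.
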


\begin{remark}
\label{staisoon}
The map from $C^*(\tilde{Y})^{\pi_1(Y)}$ to $C^*(\tilde{X})^{\pi_1(X)}$ can equivalently be defined as the composition
\[C^*(\tilde{Y})^{\pi_1(Y)} \cong C^*(\pi_1(Y))\otimes \mathcal{K} \rightarrow C^*(\pi_1(X))\otimes \mathcal{K} \cong C^*(\tilde{X})^{\pi_1(X)},\]
where $\mathcal{K}$ denotes the compact operators, the isomorphisms are the ones considered by Roe in \cite{Roe96}, and the map is given by the tensor product of a multiplicity preserving $*$-homomorphism $\mathcal{K}\to\mathcal{K}$ and the map induced from the group homomorphism $\phi: \pi_1(Y) \rightarrow \pi_1(X)$. This construction also leads to the map $C^*_L(\tilde{Y})^{\pi_1(Y)} \rightarrow C^*_L(\tilde{X})^{\pi_1(X)}$. In the notation used in \cite{CWY}, $h_{C^*}$ is denoted by $\psi$ and $h_L$ by $\psi_L$.
\end{remark}

We form the mapping cones $C_{h_{C^*}}$, $C_{h_L}$, and $C_{\phi}$. By using the mapping cone exact sequences we see that $K_*(C_{h_{C^*}}) \cong K_*(C_{\phi})$ (we always use the isomorphism constructed from the isomorphism $C^*(\tilde{X})^{\pi_1(X)} \cong C^*(\pi_1(X))\otimes \mathcal{K}$ used in Remark \ref{staisoon}).

\begin{define}(see \cite[page 9]{CWY}) 
\label{anaassmu}
Let $h:Y\to X$ be a Lipschitz continuous map of compact metric spaces. Define an asymptotic morphism $(\chi_s)_{s\in [1,\infty)} : C_h \rightarrow C_{h_L}$ via
$$ (f, a) \in C_h \mapsto (\phi^X_s(f),\phi^Y_s(a)).$$
The superscript indicates the spaces involved. The relative analytic assembly map $\mu_{{\rm max}}: K_*(SC_h) \to K_*(SC_\phi)$ is defined as the composition:
\[K_*(SC_h) \xrightarrow{(\chi_s)_*} K_*(SC_{h_L}) \xrightarrow{{\rm ev}_1} K_*(SC_{h_{C^*}})\cong K_*(SC_\phi).\]
\end{define}

In Definition \ref{anaassmu} we are abusing the notation as the asymptotic morphism $(\chi_s)_{s\in [1,\infty)}$ is not globally defined on $C_h$, it is in general only asymptotically defined on the dense $*$-subalgebra $(S\mathbb{R}_L(X)\oplus \mathbb{R}_L(Y))\cap C_h$. Again, by continuity of $(\chi_s)_{s\in [1,\infty)}$ in the maximal $C^*$-norms (for $s>>1$), a density argument ensures that the mapping on $K$-theory $(\chi_s)_*:K_*(SC_h) \to K_*(SC_{h_L})$ is well defined. The reader should also consult \cite{SchickSeyedhosseini} for an alternative approach to the relative assembly mapping.

Again, we often drop the ``max" from the notation and denote this map by $\mu$, but sometime denote this map by $\mu_{\phi}$ or $\mu_{CWY}$ to emphasize the mapping cone involved or to distinguish this analytic assembly map from the geometric assembly map defined in the next section. By construction and the functoriality of six term exact sequences, a direct limit argument ensures that the assembly mappings fit into a commuting diagram (see the proof of \cite[Theorem 2.17]{CWY}). 

\begin{prop}
If $h:Y\to X$ is a Lipschitz continuous mapping inducing the mapping $\phi:=\pi_1(h):\pi_1(Y)\to \pi_1(X)$, the relative assembly mapping fit into a commuting diagram with exact rows:
\tiny
\begin{center}
$\begin{CD}
@>>> K_*^{\textnormal{CWY}}(Y) @>h_*>> K_*^{\textnormal{CWY}}(X) @>>> K_*^{\textnormal{CWY}}(h) @>>> K_{*-1}^{\textnormal{CWY}}(Y) @>>> \\
@. @VV \mu_{{\rm max}} V @VV\mu_{{\rm max}}V  @VV\mu_{{\rm max}} V  @VV\mu_{{\rm max}} V\\
@>>> K_*(C^*(\pi_1(Y))) @>\phi >> K_*(C^*(\pi_1(X))) @>>> K_*(SC_\phi) @>>> K_{*-1}(C^*(\pi_1(Y))) @>>> \\
\end{CD}$
\end{center}
\normalsize
The upper row is the sequence from Proposition \ref{cwyexact} and the bottom row is the mapping cone sequence for $\phi$.
\end{prop}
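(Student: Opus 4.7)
The plan is to decompose each vertical map $\mu_{{\rm max}}$ into the three composing steps of its definition and verify that each step independently fits into a morphism between short exact sequences of $C^*$-algebras (or asymptotic morphisms). Naturality of the $K$-theoretic six-term exact sequence then produces commutativity of the full diagram.

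The first step is to verify that the asymptotic morphism $(\chi_s):C_h\to C_{h_L}$ from Definition \ref{anaassmu} is the restriction of a morphism between the two short exact sequences
\[
0\to SC^*_L(X)\to C_h\to C^*_L(Y)\to 0
\]
and
\[
0\to SC^*_L(\tilde X)^{\pi_1(X)}\to C_{h_L}\to C^*_L(\tilde Y)^{\pi_1(Y)}\to 0,
\]
with $(\phi^Y_s)$ on the right and its suspension $S(\phi^X_s)$ on the left. This is immediate from the explicit formula $(\chi_s)(f,a)=(\phi_s^X(f),\phi_s^Y(a))$. Passing to $K$-theory and using that asymptotic morphisms induce natural maps on six-term sequences yields a morphism of long exact sequences with vertical arrows $(\phi_s^Y)_*$, $(\phi_s^X)_*$ and $(\chi_s)_*$.

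The second step handles the evaluation at $s=1$. It produces a $*$-homomorphism $C_{h_L}\to C_{h_{C^*}}$ that by construction intertwines the two mapping cone sequences, with $\mathrm{ev}_1:C^*_L(\tilde Y)^{\pi_1(Y)}\to C^*(\tilde Y)^{\pi_1(Y)}$ (and its $X$-analogue) on the subquotients. The third step applies the Morita identification $C^*(\tilde X)^{\pi_1(X)}\cong C^*(\pi_1(X))\otimes \mathcal{K}$, and its $Y$-analogue, which by Remark \ref{staisoon} is compatible with $h_{C^*}$ and $\phi\otimes \mathrm{id}_{\mathcal{K}}$, hence with the mapping cone short exact sequence for $\phi$. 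Composing the three morphisms of six-term sequences yields the diagram in the statement.

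The main obstacle is technical rather than conceptual: as noted after Definition \ref{anaassmu}, $(\chi_s)$ is not globally defined on $C_h$ but only asymptotically defined on the dense $*$-subalgebra $(S\mathbb{R}_L(X)\oplus \mathbb{R}_L(Y))\cap C_h$. To legitimately view $(\chi_s)$ as a morphism of short exact sequences and invoke naturality of the six-term sequence at the $K$-theoretic level, one must fix compatible dense models for $h$, $\mathbb{R}_L(X)$ and $\mathbb{R}_L(Y)$ so that the formulas defining $(\chi_s)$, the quotient to $(\phi^Y_s)$, and the restriction to $S(\phi_s^X)$ simultaneously make sense and give genuine (not merely asymptotic) $*$-homomorphisms of the dense subalgebras. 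Once such models are fixed, the remainder of the argument is purely functorial, using only the naturality of the mapping cone exact sequence in $K$-theory.
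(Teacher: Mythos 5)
Your proposal is correct and follows exactly the route the paper takes: the paper justifies this proposition in one line ("by construction and the functoriality of six term exact sequences," citing the proof of Theorem 2.17 in \cite{CWY}), and your decomposition of $\mu_{\rm max}$ into the asymptotic morphism $(\chi_s)$, the evaluation ${\rm ev}_1$, and the Morita identification, each intertwining the relevant mapping-cone short exact sequences, is precisely the content of that remark. Your caveat about $(\chi_s)$ being only asymptotically defined on a dense subalgebra matches the paper's own disclaimer after Definition \ref{anaassmu}.
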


Let us consider two relevant examples:

\begin{ex} 
For a compact manifold with boundary, $W$, we consider the inclusion mapping $i: \partial W \hookrightarrow \overline{W}$. In this example, $X=W$, $Y=\partial W$ and $h=i$. This mapping models all relative geometric $K$-homology groups; both at the level of relative $K$-homology of continuous mappings (see Subsection \ref{domainsubs} below) and relative $K$-theory of $*$-homomorphisms (see Subsection \ref{codomainsubs} below). The associated Roe algebras and localization algebras of this mapping were studied in detail in \cite{xieyu,zeid14}.
\end{ex}

\begin{ex} 
\label{itemtwo}
Associated with a group homomorphism $\phi:\Gamma_1\to \Gamma_2$ of finitely generated groups there is (after choosing basepoints) a unique homotopy class of a continuous mapping $B\phi : B\Gamma_1 \rightarrow B\Gamma_2$ such that $(B\phi)_*=\phi:\pi_1(B\Gamma_1)=\Gamma_1\to \pi_1(B\Gamma_2)=\Gamma_2$. As mentioned above, we fix a choice of $B\phi$.  We remind the reader of the standing assumption that $B\Gamma_j$ is a locally finite $CW$-complex ($j=1,2$). For the purpose of defining localization algebras and their functoriality we pick metrics $\mathrm{d}_j$ on $B\Gamma_j$ for $j=1,2$ defining the topologies and making $B\phi$ Lipschitz. After a choice of word metric, we can lift the metrics $\mathrm{d}_j$ to $E\Gamma_j$ in such a way that the $\Gamma_j$-action is isometric, each orbit is bi-Lipschitz equivalent to $\Gamma_j$ and the lifted $\Gamma_1$-equivariant mapping $E\phi:E\Gamma_1\to E\Gamma_2$ is Lipschitz. 
\end{ex}

\subsection{Localized index of Dirac operators}
\label{localindex}

The localized index $\mathrm{ind}_L:K_*(X)\to K_*(C^*_L(X))$ admits a simple description for Dirac operators. We write $K_*(X)=K_*^{\rm l.f.}(X):=KK_*(C_0(X),\field{C})$ where $X$ is a locally compact metric space. 

The localized index was defined in \cite{Yu} as follows. For an equivalent approach, see \cite{qr10} and \cite{SchickSeyedhosseini} for the extension of the techniques from \cite{qr10} to the maximal $C^*$-norm. As in the previous section, $\rho$ is a standard representation of $C_0(X)$ on a Hilbert space $\mathcal{H}$. An operator $T\in \mathbbm{B}(\mathcal{H})$ is said to be pseudo-local if the commutator $[T,\rho(a)]$ is compact for any $a\in C_0(X)$. The algebra $D^*_{\rho,{\rm max}}(X)$ denotes the maximal $C^*$-algebra generated by the finite propagation pseudo-local operators on $\mathcal{H}$. We often drop ${\rm max}$ and the representation $\rho$ from the notation and write $D^*(X)$. The $C^*$-algebra $D^*_L(X)$ is defined as the maximal $C^*$-algebra generated by all $g:[1,\infty)\to D^*(X)$ such that $\mathrm{prop}(g(s))\to 0$ as $s\to \infty$.

For each $n$, take a uniformly bounded locally finite cover $(U_{n,i})_i$ of $X$ with $\mathrm{diam}(U_{n,i})<1/n$ and a subordinate partition of unity $(\phi_{n,i})_i$. For a $K$-homology cycle $(H,F)$, we define $F_L\in D_L^*(X)$ by
$$F_L(s):=\sum_{i} (n-s)\sqrt{\phi_{n,i}}F\sqrt{\phi_{n,i}}+(n-s+1)\sqrt{\phi_{n+1,i}}F\sqrt{\phi_{n+1,i}}, \quad s\in [n-1,n].$$ 
Clearly, $F_L$ is a symmetry modulo $C^*_L(X)$ and hence it defines a class $[F_L\!\!\mod C^*_L(X)]\in K_{*+1}(D^*_L(X)/C^*_L(X))$.

\begin{define}[Localized index of $K$-homology cycles] 
\label{locaindedef}
The localized index $\mathrm{ind}_L:K_*(X)\to K_*(C^*_L(X))$ is defined as 
$$\mathrm{ind}_L(H,F):=\partial [F_L\!\!\!\mod C^*_L(X)],$$ 
where $\partial :K_{*+1}(D^*_L(X)/C^*_L(X))\to K_{*}(C^*_L(X))$ denotes the boundary mapping in $K$-theory. 
\end{define}

The mapping $\mathrm{ind}_L:K_*(X)\to K_*(C^*_L(X))$ is an isomorphism by \cite[Theorem 3.4]{qr10} if the $X$-module used to define $C^*_L(X)$ is \emph{very ample}, i.e., an infinite direct sum of standard representations.

\begin{remark}
\label{projectionapproach}
More explicitly, $\mathrm{ind}_L(H,F)=[P_F]-[1]$ where the projection $P_F\in M_2(C^*_L(X)\,\tilde{}\;)$ is defined by
\small
$$P_F(s):=
\begin{pmatrix}
F_L(s)F_L^*(s)+& &F_L(s)(1-F_L^*(s)F_L(s))+\\
(1-F_L(s)F_L^*(s))F_L(s)F_L^*(s)&&(1-F_L(s)F_L^*(s))F_L(s)(1-F_L^*(s)F_L(s))\\
& &\\
(1-F_L^*(s)F_L(s))F_L^*(s)&&(1-F_L^*(s)F_L(s))^2
\end{pmatrix}$$
\normalsize
This is described in more detail in \cite[Page 1392]{willettyu12}.
\end{remark}

We now turn to Dirac operators. Assume that $X$ is a complete Riemannian manifold. Let $S\to X$ be a Clifford bundle of bounded geometry and $D$ a Dirac operator on $S$. By \cite{chernoffess,roeasympt}, $D$ is self-adjoint on $L^2(X,S)$ when equipped with the domain given by the graph closure of $C^\infty_c(X,S)$ and the wave operator $\mathrm{exp}(itD)$ has propagation speed bounded by $c|t|$ for some $c=c(D)>0$. For any $\chi\in C^\infty(\field{R})$ such that $\chi^2-1\in C^\infty_0(\field{R})$ and the Fourier transform $\hat{\chi}$ has compact support, say $\mathrm{supp}(\hat{\chi})\subseteq [-R,R]$, \cite[Proposition 2.2]{Roe88} implies that $\chi(D)$ has finite propagation bounded by $cR$. In particular, $\mathrm{prop}(\chi(s^{-1}D))\leq cR/s$ so $(\chi(s^{-1}D))_{s\in [1,\infty)}\in D^*_L(X)$. By a density argument, $(\chi(s^{-1}D))_{s\in [1,\infty)}\in D^*_L(X)$ for any $\chi \in C_b(\field{R})$ with $\chi^2-1\in C_0(\field{R})$. Define $P_D\in M_2(C^*_L(X)\,\tilde{}\;)$ as in Remark \ref{projectionapproach} using $(\chi(s^{-1}D))_{s\in [1,\infty)}$ instead of $F_L$ for a $\chi\in C_b(\field{R})$ with $\lim_{u\to \pm \infty}\chi(u)=\pm 1$. 

\begin{prop}
If $D$ is a Dirac operator on a complete manifold, the localized index of $D$ given by
$$\mathrm{ind}_L(D):=[P_D]-[1]\in K_*(C^*_L(X)),$$
is well defined and independent of  $\chi\in C_b(\field{R})$ with $\lim_{u\to \pm \infty}\chi(u)=\pm 1$. Moreover,
$$\mathrm{ind}_L(D)=\partial[(\chi(s^{-1}D))_{s\in [1,\infty)}\!\!\mod C^*_L(X)]= \mathrm{ind}_L([D]),$$ 
where $[D]\in K_*(X)$ denotes the $K$-homology class on $C_0(X)$ associated with $D$. 
\end{prop}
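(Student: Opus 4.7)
The plan is to verify four things in order: (a) the formula of Remark \ref{projectionapproach} applied to $F_s := \chi(s^{-1}D)$ genuinely defines a projection in $M_2(C^*_L(X)\,\tilde{}\;)$; (b) the boundary-map formula $\mathrm{ind}_L(D) = \partial[(\chi(s^{-1}D))_s\!\!\mod C^*_L(X)]$; (c) independence of $\chi$; and (d) agreement with $\mathrm{ind}_L([D])$. Steps (a)--(c) are routine extensions of the material preceding the proposition, while (d) is the genuine content.

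For (a) and (b), observe that the hypothesis $\lim_{u\to\pm\infty}\chi(u)=\pm 1$ forces $\chi^2-1\in C_0(\mathbb{R})$. Exactly the same wave-operator argument used in the paragraph preceding the proposition to place $(\chi(s^{-1}D))_s$ in $D^*_L(X)$, applied now to $\chi^2-1\in C_0(\mathbb{R})$, shows that $(\chi(s^{-1}D)^2-1)_s = ((\chi^2-1)(s^{-1}D))_s$ lies in $C^*_L(X)$: on smooth approximations with compactly Fourier-supported data the operator is locally compact with propagation $O(1/s)$, and a uniform norm-density argument transfers this to the general case. Taking $\chi$ real-valued (no loss), $F_s$ is self-adjoint with $1-F_s^*F_s = 1-F_sF_s^* = 1-F_s^2 \in C^*_L(X)$ as a path in $s$, so the formula of Remark \ref{projectionapproach} produces an honest self-adjoint idempotent $P_D$ equal to $\mathrm{diag}(1,0)$ modulo $C^*_L(X)$. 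The formula of Remark \ref{projectionapproach} is by design the $K$-theoretic lift of the boundary map attached to $0\to C^*_L(X)\to D^*_L(X)\to D^*_L(X)/C^*_L(X)\to 0$ applied to a symmetry class, so $[P_D]-[1] = \partial[(\chi(s^{-1}D))_s\!\!\mod C^*_L(X)]$ by construction. For (c), given two admissible $\chi_0,\chi_1$, the linear interpolant $\chi_t = (1-t)\chi_0+t\chi_1$ still satisfies $\chi_t(u)\to \pm 1$ at $\pm\infty$, hence the argument of (a) applies uniformly in $t$ and yields a norm-continuous path of projections in $M_2(C^*_L(X)\,\tilde{}\;)$ connecting $P_{D,\chi_0}$ to $P_{D,\chi_1}$.

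The main obstacle is (d). Fix any admissible $\chi$; then $(L^2(X,S),\chi(D))$ represents $[D]\in K_*(X)$ by the standard functional-calculus description (using $\chi^2-1\in C_0(\mathbb{R})$ so that $\chi(D)^2-1$ is locally compact). Write $F_L$ for the partition-of-unity localization from Definition \ref{locaindedef} applied to $F=\chi(D)$. Both $F_L$ and $(\chi(s^{-1}D))_s$ are self-adjoint elements of $D^*_L(X)$ with squares $\equiv 1$ modulo $C^*_L(X)$, so it suffices to show they represent the same symmetry class in $D^*_L(X)/C^*_L(X)$. I would split this into two mod-$C^*_L(X)$ comparisons: first, because $\mathrm{prop}(\chi(s^{-1}D))=O(1/s)$ and the cover satisfies $\mathrm{diam}(U_{n,i})<1/n$, the commutators $[\chi(s^{-1}D),\sqrt{\phi_{n,i}}]$ are locally compact with vanishing propagation, giving $\sum_i \sqrt{\phi_{n,i}}\chi(s^{-1}D)\sqrt{\phi_{n,i}} \equiv \chi(s^{-1}D)$ mod $C^*_L(X)$; second, after $\sqrt{\phi_{n,i}}$-compression the difference $\chi(D)-\chi(n^{-1}D)$ becomes locally compact with propagation controlled by the support size of $\phi_{n,i}$, so $\sum_i \sqrt{\phi_{n,i}}(\chi(D)-\chi(n^{-1}D))\sqrt{\phi_{n,i}} \in C^*_L(X)$. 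The delicate part is ensuring the required uniform-in-$s$ control as one interpolates between $s=n-1$ and $s=n$ in the definition of $F_L$; a cleaner alternative, in case these estimates prove too fragile, is to invoke the functional-calculus formulation of the localized index already employed in \cite{xieyu,zeid14} together with \cite[Theorem 3.4]{qr10}, which directly identifies $\mathrm{ind}_L([D])$ with $\partial[(\chi(s^{-1}D))_s\!\!\mod C^*_L(X)]$.
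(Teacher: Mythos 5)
Your proposal is correct and follows essentially the same route as the paper: the paper treats (a)--(c) as routine and proves (d) by reducing to the claim that $(\chi(s^{-1}D))_{s\in[1,\infty)}-F_L\in C^*_L(X)$, observing that for smooth $\chi$ with $\chi^2-1\in C^\infty_c(\field{R})$ the difference $\chi(s^{-1}D)-F_L(s)$ is an order $-1$ pseudodifferential operator (hence locally compact) with propagation tending to zero, which is exactly what your two-step commutator/compression decomposition unpacks. The uniform-in-$s$ control you worry about is not an issue: by the definition of $\mathbb{R}_{L,\rho}(X)$ one only needs the path to be uniformly bounded, uniformly continuous, pointwise locally compact, and of propagation tending to zero.
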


\begin{proof}
It remains to prove that $\mathrm{ind}_L(D)=\mathrm{ind}_L([D])$. The desired identity follows once $(\chi(s^{-1}D))_{s\in [1,\infty)}-F_L\in C^*_L(X)$, in which case 
$$[(\chi(s^{-1}D))_{s\in [1,\infty)}\!\!\mod C^*_L(X)]=[F_L\!\!\mod C^*_L(X)]\in K_{*+1}(D^*_L(X)/C^*_L(X)).$$
Trivially, we have $(\chi(s^{-1}D))_{s\in [1,\infty)}-F_L\in D^*_L(X)$. We can assume that $\chi\in C^\infty(\field{R})$ satisfies $\chi^2-1\in C^\infty_c(\field{R})$ in which case, for every $s\in [1,\infty)$, $\chi(s^{-1}D)-F_L(s)$ is a pseudo-differential operator of order $-1$, hence locally compact. Therefore, $(\chi(s^{-1}D))_{s\in [1,\infty)}-F_L\in C^*_L(X)$.
\end{proof}

\subsection{Relative localized indices and mapping cones}
\label{relativeindexandmappingsconesubsec}
In fact, if $W$ is a Riemannian manifold with boundary the localized index of a Dirac operator can be considered an element in the $K$-theory of the mapping cone of localization algebras associated with the inclusion $i:\partial W\to \overline{W}$. We tacitly assume all geometric structures to be of product type near the boundary and follow the notation of Subsection \ref{notationseciont} (see page \pageref{notationseciont}).

\begin{define}[See page 314, \cite{Yu}]
If $X$ is a locally compact metric space and $Z\subseteq X$ is a closed subspace, the $C^*$-algebra $C^*_L(Z\subseteq X)$ is defined as the closure inside $C^*_L(X)$ of the elements $(T_s)_{s\in [1,\infty)}\in C^*_L(X)$ such that there is a function $\sigma:[1,\infty)\to \field{R}_{\geq 0}$ with $\lim_{s\to \infty}\sigma(s) =0$ and $\mathrm{d}_{X\times X}(\mathrm{supp}(T_s),Z\times Z)\leq \sigma(s)$.
\end{define}

We remark that $C^*_L(Z\subseteq X)\subseteq C^*_L(X)$ is an ideal and the inclusion $C^*_L(Z)\to C^*_L(Z\subseteq X)$ induces an isomorphism on $K$-theory by \cite[Lemma 3.10]{Yu}. 

\begin{prop}
Let $W$ be a compact manifold with boundary. 
\begin{enumerate}
\item Choose a $b\in C^\infty_c(W_2^\circ)$ such that $b=1$ on $\overline{W}_{3/2}$ and define the completely positive mapping $\tau_b:C^*_L(W_\infty)\to C^*_L(\overline{W}_2)$ by $\tau_b(T):=bTb$. 
\item Let $S_u:C^*_L(\overline{W}_2)\to C^*_L(\overline{W}_2)$ denote the $*$-homomorphism defined by shift by $u>0$ in the asymptotic parameter. 
\item Denote the quotient $C^*_L(\overline{W}_2)\to C^*_L(\overline{W}_2)/C^*_L([1,2]\times \partial W\subseteq \overline{W}_2)$ by $q$. 
\end{enumerate}

The asymptotic morphism 
$$(\beta^W_u)_{u\in [1,\infty)}: C^*_L(W_\infty)\to C^*_L(\overline{W}_2)/C^*_L([1,2]\times \partial W\subseteq \overline{W}_2)$$ 
defined as the composition 
$$C^*_L(W_\infty)\xrightarrow{\tau_b}C^*_L(\overline{W}_2)\xrightarrow{S_u} C^*_L(\overline{W}_2)\xrightarrow{q} C^*_L(\overline{W}_2)/C^*_L([1,2]\times \partial W\subseteq \overline{W}_2),$$
defines an isomorphism 
$$(\beta^W_u)_*: K_*(C^*_L(W_\infty))\xrightarrow{\sim} K_*(C^*_L(\overline{W}_2)/C^*_L([1,2]\times \partial W\subseteq \overline{W}_2)).$$ 
\end{prop}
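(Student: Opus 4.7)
The plan is to invoke the five lemma on a commutative ladder of six term $K$-theory exact sequences in which $(\beta^W_u)_*$ sits as a vertical map and the remaining verticals are identified with isomorphisms.

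The bottom row of the ladder is tautological: the defining short exact sequence
\begin{equation*}
0\to C^*_L([1,2]\times \partial W\subseteq \overline{W}_2)\to C^*_L(\overline{W}_2)\to Q\to 0
\end{equation*}
(where $Q$ denotes the codomain of $\beta^W_u$) produces a six term $K$-theory sequence. Using the remark following the definition of $C^*_L(Z\subseteq X)$ (i.e.\ \cite[Lemma 3.10]{Yu}), the ideal has $K$-theory $K_*^{\textnormal{CWY}}([1,2]\times \partial W)\cong K_*^{\textnormal{CWY}}(\partial W)$; and homotopy invariance \cite[Theorem 3.4]{qr10}, which underlies Proposition \ref{functorcwy}, gives $K_*(C^*_L(\overline{W}_2))\cong K_*^{\textnormal{CWY}}(\overline{W})$. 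Hence the bottom row takes the form
\begin{equation*}
\cdots\to K_*^{\textnormal{CWY}}(\partial W)\to K_*^{\textnormal{CWY}}(\overline{W})\to K_*(Q)\to K_{*-1}^{\textnormal{CWY}}(\partial W)\to\cdots.
\end{equation*}

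For the top row, I would mirror this construction on $W_\infty$ using the ideal $J:=C^*_L([2,\infty)\times \partial W\subseteq W_\infty)$ inside $C^*_L(W_\infty)$. This ideal is chosen to lie outside the support of $b$, so that $\tau_b$ annihilates $J$ and descends to the quotient. Again $K_*(J)\cong K_*^{\textnormal{CWY}}(\partial W)$ by \cite[Lemma 3.10]{Yu}, and the quotient $C^*_L(W_\infty)/J$ should have $K$-theory $K_*^{\textnormal{CWY}}(\overline{W})$ by an excision-type argument exploiting that $W_\infty\setminus [2,\infty)\times \partial W=\overline{W}_2^\circ\simeq \overline{W}$. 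The ladder is then assembled by observing that $\beta^W_u=q\circ S_u\circ \tau_b$ is the natural vertical map: on the ideal it restricts (via $\tau_b$) to the identification $K_*^{\textnormal{CWY}}(\partial W)\to K_*^{\textnormal{CWY}}(\partial W)$, and on the ambient quotient it factors through the identification of both $K_*(C^*_L(W_\infty)/J)$ and $K_*^{\textnormal{CWY}}(\overline{W})$. The cutoff condition $b\equiv 1$ on $\overline{W}_{3/2}$ guarantees that this factorization agrees with the identity up to homotopy invariance, while the shift $S_u$ is a $K$-theoretic identity whose sole purpose is to ensure that the failure of multiplicativity $bTbSb-bTSb$ is pushed into the ideal $C^*_L([1,2]\times \partial W\subseteq \overline{W}_2)$ asymptotically, so that $\beta^W_u$ is a bona fide asymptotic morphism into $Q$. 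The five lemma then yields that $(\beta^W_u)_*$ is an isomorphism.

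\textbf{Main obstacle.} The principal technical hurdle is the excision identification $K_*(C^*_L(W_\infty)/J)\cong K_*^{\textnormal{CWY}}(\overline{W})$ and, in tandem, the compatibility of the corresponding boundary map with the one coming from the bottom row. This is a statement about localization algebras on a manifold with a cylindrical end and should be extracted from or closely modelled on the analysis in \cite{xieyu,zeid14}; the key subtlety is choosing ample modules for the various localization algebras consistently so that the natural quotient maps, the cutoff $\tau_b$, and the shift $S_u$ all interact compatibly at the level of cycles. Once this excision result is in place, the remaining diagram chase is routine and the five lemma concludes the proof.
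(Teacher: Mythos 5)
Your sketch of the asymptotic multiplicativity is in the right spirit but is not actually carried out: the paper's verification requires introducing a second cutoff $\tilde b$ with $\tilde b b=b$, splitting $\tau_b(T)\tau_b(T')-\tau_b(TT')$ into one term that $S_u$ eventually kills outright (because $\mathrm{prop}(T'_s)\to 0$) and one term that $S_u$ pushes into the ideal $C^*_L([1,2]\times \partial W\subseteq \overline{W}_2)$. That is a fixable omission.

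The genuine gap is in the top row of your five-lemma ladder. By \cite[Lemma 3.10]{Yu} one has $K_*(J)\cong K_*(C^*_L([2,\infty)\times \partial W))$, and this is the \emph{locally finite} $K$-homology of the half-open cylinder, which vanishes: $C_0([2,\infty))$ is a contractible $C^*$-algebra, and indeed the paper's proof of Theorem \ref{locindexthm} relies on precisely the vanishing $K_*(C^*_L([1,\infty)\times\partial W))=0$. Your identification $K_*(J)\cong K_*^{\textnormal{CWY}}(\partial W)$ conflates the direct limit of Definition \ref{defcwynotation} with the $K$-theory of the localization algebra of the non-compact subspace; these coincide for compact spaces but not here, since the homotopy collapsing $[2,\infty)\times\partial W$ onto $\partial W$ is not proper and \cite[Theorem 3.4]{qr10} does not apply to it. Consequently $K_*(C^*_L(W_\infty)/J)\cong K_*(C^*_L(W_\infty))=K_*(W^\circ)$ rather than $K_*^{\textnormal{CWY}}(\overline{W})$, your ladder has $0\to K_*^{\textnormal{CWY}}(\partial W)$ in the outer positions, and the five lemma cannot be invoked. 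The repair is to abandon the ideal--quotient sequence for the top row and use instead the Mayer--Vietoris sequence \cite[Proposition 3.11]{Yu} for the decomposition $W_\infty=\overline{W}\cup([1,\infty)\times\partial W)$ with intersection $\partial W$: the vanishing of the cylinder term turns it into a long exact sequence $\cdots\to K_*(C^*_L(\partial W))\to K_*(C^*_L(\overline{W}))\to K_*(C^*_L(W_\infty))\to K_{*-1}(C^*_L(\partial W))\to\cdots$, which maps to your bottom row with identities in the outer positions and $(\beta^W_u)_*$ in the middle. This is exactly the argument the paper defers to Theorem \ref{locindexthm}, and the compatibility of boundary maps you flag as the ``main obstacle'' is then part of the construction of that Mayer--Vietoris sequence rather than a separate excision statement.
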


\begin{proof}
For each $u$, $\beta^W_u$ is a completely positive mapping so we need to prove asymptotic multiplicativity. It suffices to prove the asymptotic properties on $\mathbb{R}_{L,\rho}(W_\infty)$. Take a $\tilde{b}\in C^\infty_c(W_2^\circ)$ such that $\tilde{b}=1$ on $\overline{W}_1$ and $\tilde{b}b=b$. If $T,T'\in \mathbb{R}_{L,\rho}(W_\infty)$ we write 
$$\tau_b(T)\tau_b(T')-\tau_b(TT')=bT(\tilde{b}-1)T'b+bT(b^2-\tilde{b})T'b.$$
Since $b^2-\tilde{b}\in C^\infty_c(W_2^\circ\setminus \overline{W})$, $S_u(bT(b^2-\tilde{b})T'b)\in C^*_L([1,2]\times \partial W\subseteq \overline{W}_2))$ for $u>>1$ large enough. Moreover, since $\mathrm{prop}(T'_s)\to 0$ as $s\to \infty$ it holds that $S_u(bT(\tilde{b}-1)T'b)=0$ for $u>>1$ large enough. Therefore 
$$q\circ S_u(\tau_b(T)\tau_b(T')-\tau_b(TT'))\to 0,\quad\mbox{as $u\to \infty$ uniformly in $s$.}$$ 
We conclude that $(\beta^W_u)_{u\in [1,\infty)}$ is an asymptotic morphism. That the mapping $(\beta^W_u)_{u\in [1,\infty)}$ induces an isomorphism on $K$-theory follows from the Mayer-Vietoris sequence \cite[Proposition 3.11]{Yu} and is explained in Theorem \ref{locindexthm} below.
\end{proof}

\begin{remark}
Let $\tilde{i}:C^*_L([1,2]\times \partial W\subseteq \overline{W}_2)\to C^*_L(\overline{W}_2)$ denote the inclusion. By the abstract properties of a mapping cone, there is a distinguished isomorphism $\alpha_W^0:K_*(C^*_L(\overline{W}_2)/C^*_L([1,2]\times \partial W\subseteq \overline{W}_2))\cong K_{*+1}(C_{\tilde{i}})$ (see \cite[Corollary 21.4.2]{blackbook}). Using a five lemma argument, the natural inclusion $C_i\subseteq C_{\tilde{i}}$, where $i:\partial W\to \overline{W}$ denotes the inclusion of the boundary, is an isomorphism on $K$-theory. We let 
$$\alpha_W:K_*(C^*_L(\overline{W}_2)/C^*_L([1,2]\times \partial W\subseteq \overline{W}_2))\cong K_{*+1}(C_i)$$ 
denote the composition of $\alpha_W^0$ with the isomorphism $K_*(C_{\tilde{i}})\cong K_*(C_i)$ induced from the inclusion $C_i\subseteq C_{\tilde{i}}$.
\end{remark}

\begin{define}
For a manifold with boundary $W$ and a Dirac type operator $D$ of product type near the boundary, we define 
$$\mathrm{ind}_L^{\rm rel}(D):=\alpha_W((\beta^W_u)_*\mathrm{ind}_L(D_\infty))\in K_{*+1}(C_i),$$
where $D_\infty$ denotes the extension of $D$ to $W_\infty$ using the cylindrical metric. Similarly, we define 
$$\mathrm{ind}_L^{\rm rel}:K_*(W^\circ)=K_*(W_\infty)\to K_{*+1}(C_i), \quad \mathrm{ind}_L^{\rm rel}:=\alpha_W\circ (\beta^W_u)_*\circ\mathrm{ind}_L, $$
where $\mathrm{ind}_L:K_*(W_\infty)\to K_*(C^*_L(W_\infty))$ is as in Definition \ref{locaindedef}.
\end{define}

\begin{theorem}
\label{locindexthm}
Let $W$ be a compact manifold with boundary and $i:\partial W\to W$ denote the inclusion of the boundary. The following diagram commutes, the rows are exact and the vertical arrows are isomorphisms:
\tiny
\begin{center}
$\begin{CD}
@>>> K_*(\partial W) @>i_*>> K_*(\overline{W}) @>>> K_*(W^\circ) @>\partial>> K_{*-1}(\partial W) @>>> \\
@. @VV\mathrm{ind}_LV @VV\mathrm{ind}_LV  @VV\mathrm{ind}_L^{\rm rel} V  @VV\mathrm{ind}_L V\\
@>>> K_*(C^*_L(\partial W)) @>i>> K_*(C^*_L(\overline{W})) @>>> K_{*-1}(C_i) @>>> K_{*-1}(C^*_L(\partial W)) @>>> \\
\end{CD}$
\end{center}
\normalsize
where the top row is associated with the short exact sequence $0\to C_0(W^\circ)\to C(\overline{W})\to C(\partial W)\to 0$ and the bottom row is the mapping cone sequence.
\end{theorem}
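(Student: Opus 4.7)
The plan is to split the verification into three ingredients: exactness of the bottom row, commutativity of each square, and isomorphy of the vertical maps. The strategy is to reduce everything to naturality of the localized index together with the single Mayer--Vietoris computation deferred from the previous proposition.

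Exactness of the top row is the standard six term sequence associated with $0\to C_0(W^\circ)\to C(\overline{W})\to C(\partial W)\to 0$. Exactness of the bottom row is the six term sequence for the mapping cone extension $0\to SC^*_L(\overline{W})\to C_i\to C^*_L(\partial W)\to 0$, with the usual Bott-periodic reindexing so that the connecting map $K_{*-1}(C_i)\to K_{*-1}(C^*_L(\partial W))$ becomes evaluation at the endpoint of the cone and the map from $K_*(C^*_L(\overline{W}))$ is induced by $SC^*_L(\overline W)\hookrightarrow C_i$.

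For the isomorphy of the vertical arrows: the two absolute arrows are instances of the Qiao--Roe isomorphism \cite[Theorem 3.4]{qr10}, chosen with compatible very ample modules on $\partial W$ and $\overline W$. The relative arrow is defined as the composition $\alpha_W\circ (\beta^W_u)_*\circ \mathrm{ind}_L$, where $\mathrm{ind}_L: K_*(W_\infty)\to K_*(C^*_L(W_\infty))$ is the Qiao--Roe isomorphism for $W_\infty$ and $\alpha_W$ is an isomorphism by the abstract mapping cone argument in the remark. Hence it suffices to check that $(\beta^W_u)_*$ is an isomorphism, as claimed at the end of the preceding proposition. The plan here is to invoke the Mayer--Vietoris sequence \cite[Proposition 3.11]{Yu} for the cover $\overline W_2=\overline{W}\cup ([1,2]\times \partial W)$, combined with the identification $K_*(C^*_L([1,2]\times\partial W\subseteq \overline W_2))\cong K_*(C^*_L(\partial W))$ from \cite[Lemma 3.10]{Yu}, to identify the quotient $K$-theory with $K_*(C^*_L(W^\circ))\cong K_*(C^*_L(W_\infty))$; the maps $\tau_b$, $S_u$, $q$ used to build $\beta^W_u$ implement precisely this identification at the level of cycles.

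For commutativity, the leftmost square commutes by functoriality of the localized index under the inclusion $i:\partial W\to \overline W$. The middle square, involving the restriction $K_*(\overline{W})\to K_*(W^\circ)$ and the inclusion $SC^*_L(\overline{W})\hookrightarrow C_i$, is handled by a direct diagram chase: given a $K$-homology cycle on $\overline W$, one extends the chopped operator cylindrically to $W_\infty$ and checks that $(\beta^W_u)_*\alpha_W^{-1}$ sends the result back to the class of the original $\mathrm{ind}_L$-cycle paired with the trivial path in $C_i$. The third square, involving the boundary maps $\partial:K_*(W^\circ)\to K_{*-1}(\partial W)$ on top and evaluation $C_i\to C^*_L(\partial W)$ on the bottom, is the conceptually delicate point: the connecting homomorphism in $K$-homology is realized by cutting off a cycle near $\partial W$ and reading off its boundary symbol, and one must match this with evaluation at the endpoint of the cone under $\alpha_W\circ (\beta^W_u)_*\circ \mathrm{ind}_L$. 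I expect this to be the main obstacle; the key observation to push it through is that the shift $S_u$ in the asymptotic parameter moves any cylindrical cycle off of $[1,2]\times \partial W$ in finite time, so that modulo $C^*_L([1,2]\times\partial W\subseteq \overline W_2)$ only the boundary trace survives.

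Once all three squares commute and four of the vertical maps are isomorphisms, the middle isomorphism is forced by the five lemma, which is also consistent with the direct observation that $\mathrm{ind}_L^{\rm rel}$ is a composition of three isomorphisms.
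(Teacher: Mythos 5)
Your proposal is correct and follows essentially the same route as the paper: both reduce the theorem to the Mayer--Vietoris sequence for localization algebras, the Qiao--Roe isomorphism \cite[Theorem 3.4]{qr10}, and a five lemma argument, with $(\beta^W_u)_*$ identified as an isomorphism via the Mayer--Vietoris decomposition of $W_\infty$ (resp.\ $\overline{W}_2$) along $\partial W$. The square you single out as the main obstacle --- matching the $K$-homology boundary map with evaluation at the end of the cone --- is precisely the point the paper disposes of by citing the compatibility of the Mayer--Vietoris construction with the $K$-homology boundary map in \cite[Proof of Theorem 3.2]{Yu}, so your proposed direct cycle-level verification via the shift $S_u$ is a legitimate, if more laborious, substitute for that citation.
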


\begin{proof}
For notational simplicity, write $X_1:=\overline{W}$ and $X_2:=[1,\infty)\times \partial W$. We remark that $K_*(C^*_L(X_2))\cong K^*(C_0(X_2))\cong0$. Since $X_1\cup X_2=W_\infty$ and $X_1\cap X_2=\partial W$, the Mayer-Vietoris sequence \cite[Proposition 3.11]{Yu} (see also \cite[Corollary 5.3]{zeid14}) and the discussion in \cite[Proof of Theorem 3.2]{Yu}, gives us a commuting diagram with exact rows
\tiny
\begin{center}
$\begin{CD}
@>>> K_*(\partial W) @>i_*>> K_*(\overline{W}) @>>> K_*(W^\circ) @>\partial>> K_{*-1}(\partial W) @>>> \\
@. @V\mathrm{ind}_LVV @V\mathrm{ind}_LVV  @V\mathrm{ind}_LVV  @V\mathrm{ind}_LVV\\
@>>> K_*(C^*_L(\partial W)) @>i>> K_*(C^*_L(\overline{W})) @>>> K_{*}(C^*_L(W_\infty)) @>>> K_{*-1}(C^*_L(\partial W)) @>>> \\
\end{CD}$
\end{center}
\normalsize
The vertical arrows are isomorphisms by \cite[Theorem 3.4]{qr10}. On the other hand, the construction of the Mayer-Vietoris sequence implies that the following diagram is commutative with exact rows
\tiny
\begin{center}
$\begin{CD}
@>>> K_*(C^*_L(\partial W)) @>i>> K_*(C^*_L(\overline{W})) @>>> K_{*}(C^*_L(W_\infty)) @>>> K_{*-1}(C^*_L(\partial W)) @>>> \\
@. @V=VV @V=VV  @V\alpha_W\circ (\beta_u)_*VV  @V=VV\\
@>>> K_*(C^*_L(\partial W)) @>i>> K_*(C^*_L(\overline{W})) @>>> K_{*-1}(C_i) @>>> K_{*-1}(C^*_L(\partial W)) @>>> \\
\end{CD}$
\end{center}
\normalsize
Combining these two diagrams, we arrive at the commutativity of the diagram in the statement of the theorem. Finally, $\mathrm{ind}_L^{\rm rel}:=\alpha_W\circ (\beta^W_u)_*\circ\mathrm{ind}_L$ is an isomorphism because of a five lemma argument and \cite[Theorem 3.4]{qr10}.
\end{proof}

\section{The relative assembly map as a geometrically defined map} 
\label{GeoMap}

We consider a geometric version of the analytic assembly map considered in Definition \ref{anaassmu}. 
To define the required map, we must introduce the geometric cycles used to define the domain and codomain; we begin with the domain and proceed with the codomain. This section is independent of the results in the previous section. However, the relationship between these two constructions will be considered in detail in Sections \ref{IsoGeoToAna} and \ref{comAssMap}. The term ``cycle" refers to various objects. However, context and notation should make clear which definition of ``cycle" (e.g., Definitions \ref{geoCycRelClassSpace}, \ref{geoCycMappingCone}, etc) is being used.

\subsection{The domain - relative $K$-homology}
\label{domainsubs}
Throughout this subsection we fix a continuous mapping $h:Y\to X$. The domain of the relative assembly map is defined using cycles of the following form in the case $h=B\phi$ where $B\phi$ is obtained from a group homomorphism $\phi:\Gamma_1\to \Gamma_2$ (see the discussion in the introduction).

\begin{define}[see \cite{BDT}] 
\label{geoCycRelClassSpace}
A geometric cycle with respect to $h:Y\to X$ is a triple $(W, E, (f,g))$ where
\begin{enumerate}
\item $W$ is a compact, smooth, spin$^c$-manifold with boundary;
\item $E$ is a smooth complex vector bundle over $W$;
\item $f$ is a continuous map from $W$ to $X$, $g$ is a continuous map from $\partial W$ to $Y$, and  $ f|_{\partial W} = h \circ g$, i.e. $(f,g):[i:\partial W\to W]\to [h:Y\to X]$.
\end{enumerate}
\end{define}

As in standard geometric $K$-homology \cite{BD, BDT}, $W$ need not be connected and there is a natural $\field{Z}/2$-grading on cycles defined using the dimensions of the connected components of $W$ modulo two. Furthermore, there is a definition of isomorphism for cycles and when we refer to a ``cycle" we mean ``an isomorphism class of a cycle". The opposite of a cycle is given by the same cycle but with the opposite spin$^c$-structure on the manifold; given a spin$^c$-manifold, $W$, we denote the same manifold with the opposite spin$^c$-structure by $-W$. The set of cycles form an abelian semi-group under the disjoint union operation. On the semigroup of cycles, there is a disjoint union/direct sum relation, bordism relation and a relation coming from vector bundle modification. The reader can find more on these concepts in (for example) \cite{BD}; we will give a detailed development for the latter two aspects (i.e., bordism and vector bundle modification).

\begin{define}
A regular domain, $M_0$, of a manifold $M$ is a closed subset of $M$ which has nonempty interior and satisfies the following condition: if $x \in \partial M_0$, then there exists a coordinate chart, $\phi: U \rightarrow \field{R}^n$ centered at $x$ such that 
$$\phi(M_0 \cap U) = \{ (z_1, \ldots, z_n) \in \phi(U) \subseteq \field{R}^n \: | \: z_n \ge 0\}$$
\end{define}

\begin{define}
\label{bordisminrelkhm}
A bordism or a cycle with boundary with respect to $h:Y\to X$ is a collection $((Z, W), F, (h_2, h_1))$ where
\begin{enumerate}
\item $Z$ and $W$ are compact, smooth, spin$^c$-manifolds with boundary;
\item $W$ is a regular domain in $\partial Z$;
\item $F$ is a smooth complex vector bundle over $Z$;
\item $h_2: Z \rightarrow X$ is a continuous map, $h_1: \partial Z - {\rm int}(W) \rightarrow Y$ is a continuous map, and $ h_2|_{\partial Z - {\rm int}(W)} = h \circ h_1$.
\end{enumerate} 
The boundary of $((Z, W), F, (h_2, h_1))$ is \emph{defined to be} $(W, F|_W, (h_2|_W, h_1|_{\partial W}))$ (one can check that it is a cycle). Finally, two cycles are bordant if the disjoint union of the first with the opposite of the second is a boundary.
\end{define}

\begin{define}
\label{vectormod}
Let $(W, E, (f,g))$ be a cycle and $V$ be a spin$^c$-vector bundle with even dimensional fibers over it. We define the vector bundle modification of $(W, E, (f,g))$ by $V$ to be $(S(V\oplus {\bf 1}_\field{R}), \pi^*(E) \otimes B, (f\circ p, g\circ p))$ where
\begin{enumerate}
\item ${\bf 1}_\field{R}\to W$ denotes the trivial real line bundle;
\item $p:S(V\oplus {\bf 1}_\field{R})\to W$ is the bundle projection on the sphere bundle of $V\oplus {\bf 1}_\field{R}$;
\item $B\to S(V\oplus {\bf 1}_\field{R})$ is the Bott bundle (see \cite{BD}).
\end{enumerate}
We denote the cycle so obtained by $(W, E, (f,g))^V$; one can verify that the result of this process is a cycle.
\end{define}

\begin{define}
\label{defofkhomgeorel}
Let $K_*^{\textnormal{geo}}(h):=\{(W, E, (f,g))\}/\sim$ where $\sim$ is the equivalence relation generated by the disjoint union/direct sum relation, bordism and vector bundle modification.
\end{define}

\begin{remark}
\label{firstorientedremark}
Geometric $K$-homology can equivalently be modelled on oriented manifolds, see \cite{guentnerkhom,Kescont}. The same holds for the relative groups. In the oriented model, cycles are given by triples $(W,E,(f,g))$ where $W$ is a compact oriented manifold with boundary, $E$ is a Clifford bundle on $W$ and $(f,g)$ is as before. The isomorphism between the models is constructed as in \cite[Lemma 2.8]{Kescont}.
\end{remark}

The previous definition along with the next theorem are well-known. The notion of ``normal bordism" (see any of \cite{BCW, DeeZkz, DeeRZ, DG, Jak, Rav}) can be used to prove the next theorem. In fact, the proofs in \cite{Jak} and \cite[Proposition 4.6.8]{Rav} generalize to this situation with little change; we therefore omit the details of the proof. Note that the special case when $h$ is an inclusion of a closed subspace was the first case considered, see \cite{BDrelCstar, BDT}.

\begin{theorem} \label{relGeoKhomDomExaSeq}
The following sequence is exact:
\begin{center}
$\begin{CD}
K_0^{\textnormal{geo}}(Y) @>h_*>> K_0^{\textnormal{geo}}(X) @>r>> K_0^{\textnormal{geo}}(h) \\
@AA\delta A @. @VV\delta V \\
K_1^{\textnormal{geo}}(h) @<r<<  K_1^{\textnormal{geo}}(X) @<h_*<< K_1^{\textnormal{geo}}(Y) 
\end{CD}$
\end{center}
where the maps are defined as follows
\begin{enumerate}
\item $h_*$ is the map on $K$-homology induced from $h$; it is defined at the level of cycle via $(M,E,f) \mapsto (M,E,h \circ f)$;
\item $r$ is defined at the level of cycles via $(M,E,f) \mapsto (M,E,(f,\emptyset))$;
\item $\delta$ is defined at the level of cycles via $(W,E,(f,g)) \mapsto (\partial W, E|_{\partial W}, g)$.
\end{enumerate}
\end{theorem}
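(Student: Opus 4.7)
The proof breaks into three tasks: (a) check that $h_*$, $r$, $\delta$ descend to the quotient by the Baum--Douglas relation; (b) verify that each consecutive composition is zero; (c) prove exactness at each of the three positions (the sequence has period six, but the six-term shape collapses to three distinct exactness statements by the analogous arguments in degrees $0$ and $1$). By \cite{Jak, Rav}, the only step that is not entirely formal is exactness at $K_*^{\textnormal{geo}}(h)$, which requires a normal-bordism argument.

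\textbf{Well-definedness and composition vanishing.} All three maps preserve disjoint union and direct sum trivially. A bordism $((Z,W), F, (h_2, h_1))$ for $h$ yields under $h_*$ a bordism $(Z,F, h \circ (h_2 \sqcup h_1))$ for $X$, under $r$ an obvious bordism for $h$, and under $\delta$ the induced bordism $((W, \emptyset), F|_W, h_1|_{\partial W})$ for $Y$. Vector bundle modification by $V \to W$ restricts to vector bundle modification by $V|_{\partial W}$ on the boundary, so $\delta$ commutes with modification; the other maps pull $V$ back along $f$ or leave it alone. For composition vanishing: $r(h_*(M,E,f)) = (M, E, (h\circ f, \emptyset))$ is the boundary of the cylinder $([0,1] \times M, \mathrm{pr}^*E, (h \circ f \circ \mathrm{pr}, f \circ \mathrm{pr}|_{\{0\} \times M}))$, where the regular domain is $\{1\}\times M$ (which has empty boundary, matching $\partial M = \emptyset$). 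The composition $\delta \circ r$ vanishes on the nose since $r(M,E,f)$ has empty boundary. Finally, $(h_* \circ \delta)(W, E, (f,g)) = (\partial W, E|_{\partial W}, h \circ g) = (\partial W, E|_{\partial W}, f|_{\partial W})$ bounds via the absolute cycle with boundary $(W, E, f)$ in $K_*^{\textnormal{geo}}(X)$.

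\textbf{Exactness at $K_*^{\textnormal{geo}}(X)$ and $K_*^{\textnormal{geo}}(Y)$.} At $K_*^{\textnormal{geo}}(Y)$: given $(N, E, g)$ with $h_*(N,E,g) = 0$ in $K_*^{\textnormal{geo}}(X)$, the normal bordism technology of \cite{Jak, Rav} lets us represent the null-bordism (after possible vector bundle modification, lifted back to $N$) as a genuine cycle with boundary $(Z, F, f)$ for $X$ satisfying $\partial Z = N$, $F|_N = E$, $f|_N = h \circ g$. Then $(Z, F, (f, g))$ is a legitimate cycle for $h$ with $\delta(Z, F, (f, g)) = (N, E, g)$. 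At $K_*^{\textnormal{geo}}(X)$: given $(M,E,f)$ with $r(M,E,f) = 0$, represent the null-bordism in $K_*^{\textnormal{geo}}(h)$ (again, after normal bordism to absorb vector bundle modifications) as $((Z, W), F, (h_2, h_1))$ with $W \cong M$, $F|_W = E$, $h_2|_W = f$; the complement $N := \overline{\partial Z \setminus W}$ is closed and carries $(F|_N, h_1)$, defining a cycle for $Y$. The absolute bordism $(Z, F, h_2)$ witnesses $h_*[N, F|_N, h_1] = [M, E, f]$ in $K_*^{\textnormal{geo}}(X)$.

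\textbf{Exactness at $K_*^{\textnormal{geo}}(h)$, the main obstacle.} Suppose $(W, E, (f, g)) \in \ker \delta$. The technical heart is that $(\partial W, E|_{\partial W}, g) = 0$ in $K_{*-1}^{\textnormal{geo}}(Y)$ allows both bordism and vector bundle modification, so one first applies normal bordism to reduce to a pure null-bordism of the boundary: a cycle with boundary $(Z, F, \tilde{g})$ for $Y$ with $\partial Z = \partial W$, $F|_{\partial Z} = E|_{\partial W}$, $\tilde{g}|_{\partial Z} = g$. The subtle point is that vector bundle modifications of $\partial W$ must be extended over $W$, which is achieved by replacing $(W, E, (f, g))$ with its own vector bundle modification by a spin$^c$-bundle $V \to W$ whose restriction matches; this requires the kind of extension arguments in \cite[Proposition 4.6.8]{Rav}. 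Once the boundary is purely bordant to zero, glue: let $M := W \cup_{\partial W} (-Z)$ with bundle $E \cup_\partial F$ and map $f' := f \cup (h \circ \tilde g)$ (which agree on the collar since $\tilde{g}|_{\partial Z} = g$ and $f|_{\partial W} = h \circ g$). Then $r[M, E \cup F, f'] = [W, E, (f, g)]$ via the bordism $([0,1] \times W) \cup_{\{0\} \times \partial W} Z$, with regular domain $\{1\} \times W$ and $Z$ playing the role of the relative (mapping-to-$Y$) part of the remaining boundary. The main obstacle is the bookkeeping of the normal bordism to ensure vector bundle modifications on $\partial W$ extend coherently to $W$ while respecting the factorization $f|_{\partial W} = h \circ g$; once this is handled in the style of \cite{Jak}, the six-term sequence closes up.
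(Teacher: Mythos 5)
Your overall route is exactly the one the paper points to: the paper omits the proof of Theorem \ref{relGeoKhomDomExaSeq}, stating only that the normal-bordism arguments of Jakob and of Raven (Proposition 4.6.8) generalize with little change, and your sketch correctly isolates the one genuinely nontrivial point, namely that when straightening the null-bordism of $\delta(W,E,(f,g))$ the vector bundle modifications performed on $\partial W$ must be extended over all of $W$. Two of your explicit constructions need repair, though. First, the well-definedness paragraph conflates domains: $h_*$ and $r$ are checked against \emph{absolute} bordisms (so ``$h\circ h_2$'' does not typecheck, since $h_2$ already lands in $X$), and the witness that $\delta$ respects bordism is not ``$((W,\emptyset),F|_W,h_1|_{\partial W})$'' (the map carried by $W$ lands in $X$, not $Y$) but the complementary piece $\partial Z\setminus\mathrm{int}(W)$ with the bundle $F|_{\partial Z\setminus\mathrm{int}(W)}$ and the map $h_1$, whose boundary is $\partial W$. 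Second, $([0,1]\times W)\cup_{\{0\}\times\partial W}Z$ is not a manifold: $\{0\}\times\partial W$ is a codimension-two corner stratum of $[0,1]\times W$, so you are gluing $Z$ along a stratum of the wrong dimension. The correct witness that $r[M,E\cup F,f']=[W,E,(f,g)]$ is the cycle with boundary $Z':=[0,1]\times M$ (inserting a cylinder $[0,1]\times\partial W$ between $W$ and $-Z$ if needed), with regular domain $\hat{W}:=\{0\}\times M\,\dot{\cup}\,\{1\}\times(-W)\subseteq\partial Z'$, so that $\partial Z'\setminus\mathrm{int}(\hat{W})\cong\{1\}\times(-Z)$ carries $\tilde{g}$ into $Y$; this is precisely the construction used in Lemma \ref{lemmaglue}, transposed to the present setting. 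With these two fixes your argument is the intended one.
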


\begin{remark}
\label{universalmaps}
If $Z$ is a manifold with boundary, there are mappings $f_Z:Z\to B\pi_1(Z)$ and $f_{\partial Z}:\partial Z\to B\pi_1(\partial Z)$ fitting into a commuting diagram
$$\begin{CD}
\partial Z @>i>> Z\\
@VVf_{\partial Z} V  @VVf_Z V \\
B\pi_1(\partial Z) @>B\phi >> B\pi_1(Z),
\end{CD}$$
where $i:\partial Z\hookrightarrow Z$ denotes the inclusion and $\phi:=i_*:\pi_1(\partial Z)\to \pi_1(Z)$. Indeed, the universal property of classifying spaces guarantee the existence of $f_Z$ and $f_{\partial Z}$ making the diagram commute \emph{up to homotopy} and the homotopy extension principle can be used to construct functions making the diagram commute. We write this as $(f_Z,f_{\partial Z}):[i:\partial Z\to  Z]\to [Bi: B\pi_1(\partial Z)\to B\pi_1( Z)]$. The functoriality of relative $K$-homology gives rise to a mapping 
\begin{align*}
(f_Z,f_{\partial Z})_*:K_*^{\rm geo}(i)&\to K_*^{\rm geo}(B\phi), \\
&(W,E,(f,g))\mapsto (W,E,(f_Z\circ f,f_{\partial Z}\circ g)).
\end{align*}
\end{remark}

\begin{remark}
\label{injectiveh}
Let $h:Y\to X$ be a Lipschitz mapping of compact metric spaces inducing a $*$-homomorphism $h:C^*_L(Y)\to C^*_L(X)$. If we assume that $h$ is a homeomorphism onto its range, the following mapping is well defined 
$$\mathrm{ind}_L^{\rm rel}:K_*^{\rm geo}(h)\to K_{*}^{\rm CWY}(h), \quad (W,E,(f,g))\mapsto (f,g)_*\mathrm{ind}_L^{\rm rel}(D_E),$$ 
where $D_E$ is a Dirac operator on $S_W\otimes E$. In the case that $h$ is a homeomorphism onto its range, $Y\cong h(Y)\subseteq X$ is closed and the fact that $\mathrm{ind}_{L,h}^{\rm rel}$ is well defined follows from that it factors as a mapping from cycles to classes over the analytic assembly mapping $K_*^{\rm geo}(h)\to K_*(X\setminus h(Y))$ (see \cite[Theorem 6.1]{BHS}) and the localized index $\mathrm{ind}_L:K_*(X\setminus h(Y))\to K_*(C^*_L(X\setminus h(Y)))$. 

Up to homotopy, a general mapping $h:Y\to X$ can be realized as a homeomorphism onto its range. Using Proposition \ref{homotopiesofomorcwy} (see page \pageref{homotopiesofomorcwy}) and some additional diagram chases, one can prove that the relative localized index induces a well defined  map on relative $K$-homology for a general map $h$. We will proceed with a more direct, geometric approach.
\end{remark}

We will soon define the localized index at the level of relative $K$-homology for a general mapping $h$. This is done easily with Theorem \ref{locindexthm} and the next lemma at hand. 

\begin{lemma}
\label{lemmaglue}
Let $h:Y\to X$ be a Lipschitz mapping of locally compact metric spaces inducing a $*$-homomorphism $h:C^*_L(Y)\to C^*_L(X)$. We assume that $(W,E,(f,g))$ and $(W',E',(f',g'))$ are cycles for $K_*^{\rm geo}(h)$ such that $\partial W=-\partial W'$, $g=g'$ and $E|_{\partial W}=E'|_{\partial W'}$ and thus obtaining a cycle $(M,\hat{E},\hat{f})$ for $K_*^{\rm geo}(X)$ defined from the closed spin$^c$-manifold $M:=W\cup_{\partial W}W'$, the vector bundle $\hat{E}:=E\cup_{\partial W}E'\to M$ and the mapping $\hat{f}:=f\cup_{\partial W} f':M\to X$. Then 
$$ (f,g)_*\mathrm{ind}_L^{\rm rel}(D_E^W)+(f',g')_*\mathrm{ind}_L^{\rm rel}(D_{E'}^{W'})=j_*\hat{f}_*\mathrm{ind}_L(D_{\hat{E}}^{M}),$$
where $D_{\hat{E}}^{M}$ is a Dirac operator on $M$ and $j_*:K_*^{\rm CWY}(X)=K_*(C^*_L(X))\to K_*^{\rm CWY}(h)=K_{*+1}(C_h)$ denotes the canonical mapping.
\end{lemma}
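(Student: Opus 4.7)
The plan is to reduce to a universal identity via functoriality, then establish that universal identity via a Mayer-Vietoris splitting of the localized index on the closed manifold.

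\textbf{Functorial reduction.} Because $g=g'$ under the identification $\partial W=-\partial W'$ and $f|_{\partial W}=h\circ g=f'|_{\partial W'}$, the maps $f,f'$ combine to a continuous $\hat f\colon M\to X$ giving a morphism $(\hat f,g)\colon[\iota\colon\partial W\hookrightarrow M]\to[h\colon Y\to X]$. Both $(f,g)$ and $(f',g')$ factor through $(\hat f,g)$ via the natural morphisms $\iota_W\colon[i\colon\partial W\to\overline W]\to[\iota]$ and $\iota_{W'}\colon[i'\colon\partial W'\to\overline{W'}]\to[\iota]$ induced from the inclusions $\overline W,\overline{W'}\subseteq M$. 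The functoriality of Proposition~\ref{functorcwy}, combined with the naturality square $(\hat f,g)_*\circ j_\iota=j_*\circ\hat f_*$, reduces the lemma to the universal identity
\begin{equation*}
(\iota_W)_*\,\mathrm{ind}_L^{\rm rel}(D_E^W)+(\iota_{W'})_*\,\mathrm{ind}_L^{\rm rel}(D_{E'}^{W'})=j_\iota\mathrm{ind}_L(D_{\hat E}^M)
\end{equation*}
in $K_{*+1}(C_\iota)$, where $j_\iota\colon K_*(C^*_L(M))\to K_{*+1}(C_\iota)$ is the canonical mapping.

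\textbf{Mayer-Vietoris splitting.} For the universal identity, apply the Mayer-Vietoris sequence of \cite[Proposition~3.11]{Yu} to the decomposition $M=\overline W_{3/2}\cup\overline{W'}_{3/2}$, whose intersection is a collar neighborhood of $\partial W$. Since $D^M_{\hat E}$ is of product type on the collar and $\chi(s^{-1}D^M_{\hat E})$ has propagation $O(1/s)\to 0$, the locality of the finite-propagation functional calculus together with the cutoff $b\in C^\infty_c(\overline W_2^\circ)$ from the definition of $\beta^W_u$ shows that $b\chi(s^{-1}D^M_{\hat E})b$ equals $b\chi(s^{-1}D^W_{E,\infty})b$ modulo $C^*_L([1,2]\times\partial W\subseteq\overline W_2)$, and analogously on the $W'$-side. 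Translated via the projections $P_D$ of Remark~\ref{projectionapproach}, this identifies the Mayer-Vietoris restrictions of $\mathrm{ind}_L(D^M_{\hat E})$ with $(\beta^W_u)_*\mathrm{ind}_L(D^W_{E,\infty})$ on the $W$-side and $(\beta^{W'}_u)_*\mathrm{ind}_L(D^{W'}_{E',\infty})$ on the $W'$-side. Applying the isomorphisms $\alpha_W,\alpha_{W'}$ to pass to mapping cone $K$-theory and pushing into $K_{*+1}(C_\iota)$ through the inclusions $\overline W,\overline{W'}\subseteq M$ yields the desired sum.

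\textbf{Main obstacle.} The principal technical difficulty is orientation bookkeeping: the condition $\partial W=-\partial W'$ gives the collar opposite orientations from the two sides, and this is precisely what causes the Mayer-Vietoris overlap contributions to cancel so that the sum matches $j_\iota\mathrm{ind}_L(D^M_{\hat E})$ without a residual term in $K_{*+1}(C^*_L(\partial W))$. Tracking these signs through the composition $\alpha\circ(\beta_u)_*$ and the mapping cone long exact sequence requires careful diagram chasing. A secondary issue is that the restriction of $D^M_{\hat E}$ to $\overline W$ is not itself a self-adjoint operator on $W_\infty$, so the comparison with $D^W_{E,\infty}$ must be performed only at the level of the projection $P_D$, using finite propagation of $\chi(s^{-1}D)$ to absorb the errors into the ideal $C^*_L([1,2]\times\partial W\subseteq\overline W_2)$.
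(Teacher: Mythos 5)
Your functorial reduction to the universal identity for $[\iota:\partial W\to M]$ is exactly the first step of the paper's proof and is correct. The second step is where you diverge, and where there is a genuine gap. The phrase ``the Mayer--Vietoris restrictions of $\mathrm{ind}_L(D^M_{\hat E})$'' does not denote a well-defined object: the Mayer--Vietoris map of \cite[Proposition 3.11]{Yu} goes from $K_*(C^*_L(\overline{W}))\oplus K_*(C^*_L(\overline{W'}))$ \emph{into} $K_*(C^*_L(M))$, and it is in general neither injective nor canonically split, so a class on $M$ has no preferred decomposition into pieces on the two sides. Your own ``main obstacle'' paragraph, which speaks of overlap contributions that must cancel and of a residual term in $K_*(C^*_L(\partial W))$, indicates that you are trying to lift $\mathrm{ind}_L(D^M_{\hat E})$ along this sum map; any such lift is ambiguous precisely up to the image of $K_*(C^*_L(\partial W))$, so the argument as set up does not close. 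What would make the analytic route work is to observe instead that $j_\iota$ factors through $K_*\bigl(C^*_L(M)/C^*_L(\partial W\subseteq M)\bigr)\cong K_{*+1}(C_\iota)$, and that this quotient splits as a direct sum of two subquotients supported over $W^\circ$ and $W'^\circ$ (cross terms are forced into the ideal because propagation tends to zero); one then identifies each component of $j_\iota\mathrm{ind}_L(D^M_{\hat E})$ with $(\iota_W)_*\alpha_W(\beta^W_u)_*\mathrm{ind}_L(D^W_{E,\infty})$, using your (correct) finite-propagation comparison of $b\chi(s^{-1}D^M_{\hat E})b$ with $b\chi(s^{-1}D^W_{E,\infty})b$ together with a compatibility diagram for $\alpha_W$, $S_u$, $q$ and the pushforward $K_{*+1}(C_i)\to K_{*+1}(C_\iota)$ that you assert but do not verify. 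In this formulation no cancellation of overlap terms occurs; the condition $\partial W=-\partial W'$ is only needed so that $M$ is a closed spin$^c$-manifold carrying $D^M_{\hat E}$.

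For comparison, the paper's proof avoids the analysis entirely. After the same reduction, Remark \ref{injectiveh} already guarantees that $\mathrm{ind}_L^{\rm rel}$ descends to a well-defined homomorphism on $K_*^{\rm geo}(\iota:\partial W\to M)$, since $\iota$ is a homeomorphism onto its closed range; in particular it is bordism invariant there. The desired identity is then equivalent to the purely geometric statement that $(W,E,(f,g))\,\dot{\cup}\,(W',E',(f',g'))\,\dot{\cup}\,(-M,\hat{E},(\hat{f},\emptyset))$ bounds, and an explicit bordism is supplied by $Z=[0,1]\times M$ with regular domain $\{0\}\times(W\,\dot{\cup}\,W')\,\dot{\cup}\,\{1\}\times(-M)\subseteq\partial Z$. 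If you wish to keep your analytic gluing argument, you must supply the direct-sum decomposition of the quotient algebra and the compatibility diagram described above; otherwise the bordism argument is the shorter path.
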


\begin{proof}
We can by functoriality reduce to the case that $X=M$, $Y=\partial W$ and $h$ is the inclusion $\partial W\to M=W\cup_{\partial W}W'$. We can moreover assume that $f:W\to M=W\cup_{\partial W}W'$ and $f':W'\to M=W\cup_{\partial W}W'$ are the inclusions, in which case $g=f|_{\partial W}$ and $g'=f'|_{\partial W'}$. By Remark \ref{injectiveh}, the mapping
$$\mathrm{ind}_L^{\rm rel}:K_*^{\rm geo}(h:\partial W\to M)\to K_*^{\rm CWY}(h:\partial W\to M)$$ 
is well defined, and it thus suffices to prove the existence of a bordism 
$$(W,E,(f,g))\dot{\cup}(W',E',(f',g'))\dot{\cup} (-M,\hat{E},(\hat{f},\emptyset))\sim_{\rm bor}0 \quad\mbox{in $K_*^{\rm geo}(h:\partial W\to M)$.}$$

We consider the manifold with boundary 
$$Z:=[0,1]\times (W\cup_{\partial W}([0,1]\times \partial W)\cup_{\partial W} W').$$ 
Since $M\cong W\cup_{\partial W}([0,1]\times \partial W)\cup_{\partial W} W'$, we can identify $\partial Z$ with $\{0\}\times M\dot{\cup}\{1\}\times (-M)$ and $\hat{W}:=\{0\}\times (W\dot{\cup} W')\dot{\cup}\{1\}\times (-M)\subseteq \partial Z$ with a regular domain. We have $\partial Z\setminus \hat{W}=[0,1]\times M$. We define the mappings $h_1:\partial Z\setminus \hat{W}\to M$ and $h_2:Z=[0,1]\times M\to M$ as the projections and the vector bundle $F:=h_2^*(E\cup_{\partial W} E')$. We obtain a cycle with boundary $((Z,\hat{W}),F,(h_2,h_1))$ for $K_*^{\rm geo}(h:\partial W\to M)$, and its boundary coincides with $(W,E,(f,g))\dot{\cup}(W',E',(f',g'))\dot{\cup} (-M,\hat{E},(\hat{f},\emptyset))$.
\end{proof}

Recall the notation in Definition \ref{defcwynotation} on page \pageref{defcwynotation}.

\begin{theorem}
\label{relaassonkhom}
Let $h:Y\to X$ be a Lipschitz mapping of locally compact metric spaces inducing a $*$-homomorphism $h:C^*_L(Y)\to C^*_L(X)$. Then the following mapping is well defined 
$$\mathrm{ind}_L^{\rm rel}:K_*^{\rm geo}(h)\to K_{*}^{\rm CWY}(h), \quad (W,E,(f,g))\mapsto (f,g)_*\mathrm{ind}_L^{\rm rel}(D_E),$$ 
where $D_E$ is a Dirac operator on $S_W\otimes E$. The mapping $\mathrm{ind}_L^{\rm rel}$ fits into a commuting diagram with exact rows
\tiny
\begin{center}
$\begin{CD}
@>>> K_*^{\rm geo}(Y) @>h_*>> K_*^{\rm geo}(X) @>r>> K_*^{\rm geo}(h) @>\delta>> K_{*-1}(Y) @>>> \\
@. @VV\mathrm{ind}_LV @VV\mathrm{ind}_LV  @VV\mathrm{ind}_L^{\rm rel} V  @VV\mathrm{ind}_L V\\
@>>> K_*^{\rm CWY}(Y) @>h_*>> K_*^{\rm CWY}(X) @>>> K_{*}^{\rm CWY}(h) @>>> K_{*-1}^{\rm CWY}(Y) @>>> \\
\end{CD}$
\end{center}
\normalsize
If $X$ and $Y$ are locally finite $CW$-complexes, the mapping $\mathrm{ind}_L^{\rm rel}$ is an isomorphism.
\end{theorem}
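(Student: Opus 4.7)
The plan is to verify well-definedness of $\mathrm{ind}_L^{\rm rel}$ on cycles, check commutativity of the diagram square by square, and then invoke the five lemma. For well-definedness, additivity under disjoint union and under direct sum of bundles is immediate, and invariance under vector bundle modification reduces, after pushforward via $(f,g)_*$, to the corresponding statement for the absolute localized index. The essential step is bordism invariance. Given a bordism $((Z,W), F, (h_2, h_1))$, I set $W' := \overline{\partial Z\setminus \mathrm{int}(W)}$ so that $M := \partial Z = W\cup_{\partial W}W'$ is closed; since $(Z,F,h_2)$ provides an absolute bordism for $(M,F|_M,h_2|_M)$, the class $(h_2|_M)_*\mathrm{ind}_L(D_{F|_M}^M)$ vanishes in $K_*^{\rm CWY}(X)$, and hence also after pushing to $K_{*+1}^{\rm CWY}(h)$. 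Applying Lemma \ref{lemmaglue} to the two cycles $(W,F|_W,(h_2|_W,h_1|_{\partial W}))$ and $(W',F|_{W'},(h_2|_{W'},h_1|_{\partial W'}))$ glued along the common boundary $\partial W = \partial W'$ yields
\begin{equation*}
(h_2|_W, h_1|_{\partial W})_*\mathrm{ind}_L^{\rm rel}(D_{F|_W}^W) = -(h_2|_{W'}, h_1|_{\partial W'})_*\mathrm{ind}_L^{\rm rel}(D_{F|_{W'}}^{W'}).
\end{equation*}
On the right, $h_2|_{W'} = h\circ h_1$ factors through $h$ on \emph{all} of $W'$, so the filler cycle arises from a cycle for $[\mathrm{id}_Y:Y\to Y]$ via the morphism $(h,\mathrm{id}_Y)$. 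Since $C_{\mathrm{id}_Y}$ is contractible, the absolute identity version of the theorem collapses to zero; realized at the cycle level, the product cobordism $W'\times [0,1]$ (viewed as a regular domain of its own boundary, equipped with the evident projection maps to $Y$ and $X$) provides an explicit null-bordism for the filler in $K_*^{\rm geo}(h)$.

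The three commuting squares come next. The square for $h_*$ is the functoriality of $\mathrm{ind}_L$ along Lipschitz maps (Proposition \ref{functorcwy}). The square for $r$ reduces to the observation that on a closed cycle $(M,E,f)$ the relative index of $(M,E,(f,\emptyset))$ coincides with $j_*f_*\mathrm{ind}_L(D_E^M)$. The square for $\delta$ is precisely the content of Theorem \ref{locindexthm}, which identifies the geometric relative boundary $(W,E,(f,g))\mapsto(\partial W, E|_{\partial W}, g)$ with the connecting map of the mapping cone long exact sequence. Finally, when $X$ and $Y$ are locally finite CW-complexes, the four outer vertical maps are known isomorphisms (via the standard identification of geometric and analytic $K$-homology combined with \cite[Theorem 3.4]{qr10}), so the five lemma yields that $\mathrm{ind}_L^{\rm rel}$ is an isomorphism.

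The main obstacle I anticipate is bordism invariance, specifically exhibiting the filler cycle on $W'$ as bordant to zero at the level of geometric cycles for $[h]$. While this vanishing is morally clear because $C_{\mathrm{id}_Y}$ is contractible, producing the required null-bordism without a circular appeal to $\mathrm{ind}_L^{\rm rel}$ itself requires a direct geometric construction; the cylinder $W'\times[0,1]$ with the evident projections should do the job, but the orientation bookkeeping and the precise verification that the bordism conditions from Definition \ref{bordisminrelkhm} are met deserve care.
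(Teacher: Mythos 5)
Your overall strategy coincides with the paper's: Lemma \ref{lemmaglue} drives bordism invariance, Theorem \ref{locindexthm} together with functoriality gives the commuting squares, and the five lemma gives the isomorphism statement. The one place where your argument goes wrong is the final step of bordism invariance. Having isolated the filler term $(h_2|_{W'},h_1|_{\partial W'})_*\mathrm{ind}_L^{\rm rel}(D^{W'}_{F|_{W'}})$ and observed that it factors as $(h,\mathrm{id}_Y)_*$ applied to the relative index of the cycle $(W',F|_{W'},(h_1,h_1|_{\partial W'}))$ for $[\mathrm{id}_Y:Y\to Y]$, you propose to kill it by exhibiting an explicit geometric null-bordism of the filler in $K_*^{\rm geo}(h)$ via the cylinder $W'\times[0,1]$. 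That route is circular: a null-bordism of a cycle only forces the vanishing of its analytic index once $\mathrm{ind}_{L,h}^{\rm rel}$ is already known to be bordism invariant, which is precisely what is being proved. Your closing worry that this is the ``main obstacle'' is thus pointing at a defect of your own resolution rather than at a genuine difficulty in the theorem.

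The correct conclusion --- and the one the paper draws --- is the one you mention in passing and then abandon: the element $(h_1,h_1|_{\partial W'})_*\mathrm{ind}_L^{\rm rel}(D^{W'}_{F|_{W'}})$ is defined directly at the level of cycles as an element of the group $K_*^{\rm CWY}(\mathrm{id}_Y)=K_{*+1}(C_{\mathrm{id}_Y})$, and this group vanishes because the mapping cone of the identity $*$-homomorphism is contractible. No descent to classes and no null-bordism is required; the element is zero because it lives in the zero group, and hence so is its image under $(h,\mathrm{id}_Y)_*$ in $K_*^{\rm CWY}(h)$. A smaller imprecision: for vector bundle modification the reduction is not to the absolute localized index but to the universal relative case of the inclusion $i:\partial W\hookrightarrow W$, where well-definedness on classes is supplied by Remark \ref{injectiveh} because $i$ is a homeomorphism onto its image; functoriality then transports the invariance to general $h$.
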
 

\begin{proof}
Assuming the map $\mathrm{ind}_L^{\rm rel}$ is well defined, functoriality implies that the proof that the diagram commutes reduces to the case when $h$ is the inclusion of the boundary of a manifold. In this particular case, Theorem \ref{locindexthm} implies that the diagram commutes. 

As such, we need only show that $\mathrm{ind}_L^{\rm rel}$ is well defined. For simplicity, we assume that $X$ and $Y$ are compact. To emphasize the $h$-dependence we write $\mathrm{ind}_{L,h}^{\rm rel}$ throughout the proof. As noted above in Remark \ref{injectiveh}, $\mathrm{ind}_{L,h}^{\rm rel}$ is well defined when $h$ is a homeomorphism onto its range. 

For a general $h$, we proceed by proving that $\mathrm{ind}_{L,h}^{\rm rel}$ respects the relations defining $K_*^{\rm geo}(h)$ (see Definition \ref{defofkhomgeorel} on page \pageref{defofkhomgeorel}). It is immediate that $\mathrm{ind}_{L,h}^{\rm rel}$ respects the disjoint union/direct sum relation. 

To prove that $\mathrm{ind}_{L,h}^{\rm rel}$ respects vector bundle modification, we consider a cycle $(W,E,(f,g))$ for $K_*^{\rm geo}(h)$. Let $i:\partial W\to W$ denote the inclusion of the boundary. Remark \ref{injectiveh} implies that $\mathrm{ind}_{L,i}^{\rm rel}$ is well defined, and therefore respects vector bundle modification. In particular, for a spin$^c$-vector bundle $V\to W$ of even rank, 
$$\mathrm{ind}^{\rm rel}_{L,i}(W,E,(\mathrm{id}_W,\mathrm{id}_{\partial W}))=\mathrm{ind}^{\rm rel}_{L,i}(W,E,(\mathrm{id}_W,\mathrm{id}_{\partial W})^V).$$
By functoriality, 
\begin{align*}
\mathrm{ind}^{\rm rel}_{L,h}(W,E,(f,g))&=(f,g)_*\mathrm{ind}^{\rm rel}_{L,i}(W,E,(\mathrm{id}_W,\mathrm{id}_{\partial W}))=\\
&=(f,g)_*\mathrm{ind}^{\rm rel}_{L,i}(W,E,(\mathrm{id}_W,\mathrm{id}_{\partial W})^V)=\mathrm{ind}^{\rm rel}_{L,h}(W,E,(f,g)^V),
\end{align*}
and $\mathrm{ind}^{\rm rel}_{L,h}$ respects vector bundle modification. 

We prove bordism invariance of $\mathrm{ind}^{\rm rel}_{L,h}$ using Lemma \ref{lemmaglue}. If $((Z,W),F,(h_2,h_1))$ is a cycle with boundary for $K_*^{\rm geo}(h)$, Lemma \ref{lemmaglue} implies that 
\begin{equation}
\label{glueingapart}
j_*(h_2)_*\mathrm{ind}_L(D_{F}^{Z})= (h_2|_W,h_1|_{\partial W})_*\mathrm{ind}_L^{\rm rel}(D_F^W)+(h_2|_{\partial Z\setminus W^\circ},h_1|_{\partial W})_*\mathrm{ind}_L^{\rm rel}(D_{F}^{\partial Z\setminus W^\circ}).
\end{equation}
By bordism invariance of $\mathrm{ind}_L:K_*^{\rm geo}(X)\to K_*^{\rm CWY}(X)$, $(h_2)_*\mathrm{ind}_L(D_{F}^{Z})=0$ in $K_*^{\rm CWY}(X)=K_*(C^*_L(X))$. Therefore, the left hand side of Equation \eqref{glueingapart} vanishes. As for the second term in the right hand side of Equation \eqref{glueingapart}, since $h_1$ is defined on $\partial Z\setminus W^\circ$ and $h_2|_{\partial Z\setminus W^\circ}=h\circ h_1$, it comes from the cycle $(\partial Z\setminus W^\circ, F_{\partial Z\setminus W^\circ},(h_1,h_1|_{\partial W}))$ for $K_*^{\rm geo}(\mathrm{id}_Y)$. More precisely, it can be written as 
$$(h_2|_{\partial Z\setminus W^\circ},h_1|_{\partial W})_*\mathrm{ind}_L^{\rm rel}(D_{F}^{\partial Z\setminus W^\circ})=(h,\mathrm{id}_Y)_*(h_1,h_1|_{\partial W})_*\mathrm{ind}_L^{\rm rel}(D_{F}^{\partial Z\setminus W^\circ}).$$
Here $(h_1,h_1|_{\partial W})_*\mathrm{ind}_L^{\rm rel}(D_{F}^{\partial Z\setminus W^\circ})\in K_*^{\rm CWY}(\mathrm{id}_Y)=0$. Therefore, the second term of the right hand side of Equation \eqref{glueingapart}, $(h_2|_{\partial Z\setminus W^\circ},h_1|_{\partial W})_*\mathrm{ind}_L^{\rm rel}(D_{F}^{\partial Z\setminus W^\circ})\in K_*^{\rm CWY}(h)$, vanishes. We conclude that 
$$\mathrm{ind}^{\rm rel}_{L,h}(W,F|_W,(h_2|_W,h_1|_{\partial W}))=(h_2|_W,h_1|_{\partial W})_*\mathrm{ind}_L^{\rm rel}(D_F^W)=0,$$ 
and $\mathrm{ind}^{\rm rel}_L$ is bordism invariant. 

\end{proof}

\subsection{The codomain - geometric relative $K$-theory of $\phi$} 
\label{codomainsubs}
We now turn to a geometric model for the mapping cone of $\phi:C^*(\Gamma_1)\to C^*(\Gamma_2)$. This model was studied in detail in \cite{DeeRZ} and applies in the more general situation of any unital $*$-homomorphism between unital $C^*$-algebras.

\begin{define}(see \cite[Definition 4.1]{DeeRZ}) 
\label{geoCycMappingCone} 
\\
A geometric cycle with respect to a unital $*$-homomorphism $\phi: C^*(\Gamma_1) \rightarrow C^*(\Gamma_2)$ is $(W, (E_{C^*(\Gamma_2)}, F_{C^*(\Gamma_1)}, \alpha))$ where
\begin{enumerate}
\item $W$ is a smooth, compact spin$^c$-manifold with boundary;
\item $E_{C^*(\Gamma_2)}$ is a $C^*(\Gamma_2)$-bundle over $W$;
\item $F_{C^*(\Gamma_1)}$ is a $C^*(\Gamma_1)$-bundle over $\partial W$;
\item $\alpha: E_{C^*(\Gamma_2)}|_{\partial W} \rightarrow E_{C^*(\Gamma_1)}\otimes_{\phi} C^*(\Gamma_2)$ is an isomorphism of $C^*(\Gamma_2)$-bundles.
\end{enumerate}
\end{define}

Results in \cite{DeeRZ, DeeMappingCone} imply that such cycles can be arranged (by moding out by an geometrically defined equivalence relation) into an abelian group, $K_*^{\textnormal{geo}}(pt;\phi)$. The relation is generated by disjoint union/direct sum relation, bordism and vector bundle modification in a manner similar to the ones defined above in Definitions \ref{bordisminrelkhm} and \ref{vectormod}. For the precise definitions in the context of this model see \cite[Section 4]{DeeRZ}. The group $K_*^{\textnormal{geo}}(pt;\phi)$ forms a realization of the Kasparov group $KK^*(\field{C},SC_{\phi})$. The interested reader can find further details on this model in \cite{DeeRZ, DeeMappingCone}. A fundamental property of this construction is the following theorem:

\begin{theorem}[special case of Theorem 4.21 in \cite{DeeRZ}]
\label{exactonk}
The following sequence is exact:
\begin{center}
$\begin{CD}
K_0^{\textnormal{geo}}(pt; C^*(\Gamma_1))) @>\phi_*>> K_0^{\textnormal{geo}}(pt; C^*(\Gamma_2))) @>r>> K_0^{\textnormal{geo}}(pt; \phi) \\
@AA\delta A @. @VV\delta V \\
K_1^{\textnormal{geo}}(pt; \phi) @<r<<  K_1^{\textnormal{geo}}(C^*(\Gamma_2)) @<\phi_*<< K_1^{\textnormal{geo}}(pt; C^*(\Gamma_1)) 
\end{CD}$
\end{center}
where the maps are defined at the level of cycles via
\begin{enumerate}
\item $r ( M, E_{C^*(\Gamma_2)}):= ( M, (E_{C^*(\Gamma_2)}, \emptyset, \emptyset))$
\item $\delta(W, (E_{C^*(\Gamma_2)}, F_{C^*(\Gamma_1)}, \alpha)):= (\partial W, F_{C^*(\Gamma_1)})$ 
\end{enumerate}
\end{theorem}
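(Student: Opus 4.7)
My plan is to treat the theorem as a specialization of \cite[Theorem 4.21]{DeeRZ}, which handles the exact sequence for the mapping cone cycles associated with an arbitrary unital $*$-homomorphism $\phi:B_1\to B_2$. The first step is therefore to match the setups: the groups $K_*^{\textnormal{geo}}(pt;C^*(\Gamma_i))$ and $K_*^{\textnormal{geo}}(pt;\phi)$ defined here are the instance $B_1=C^*(\Gamma_1)$, $B_2=C^*(\Gamma_2)$ of the generic construction, and the maps $r$, $\delta$, and $\phi_*$ given in the statement agree with the corresponding maps in the cited theorem. Once this identification is in place, exactness transfers. If a self-contained argument is preferred, I would run the same strategy as for Theorem \ref{relGeoKhomDomExaSeq}, to which this sequence is dual in the mapping-cone picture.

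In that more hands-on approach I would first verify well-definedness of the three maps at the level of cycles. Extension of scalars makes $\phi_*$ respect disjoint union, bordism (a bordism $(Z,F_{C^*(\Gamma_1)})$ pushes forward to $(Z,F_{C^*(\Gamma_1)}\otimes_\phi C^*(\Gamma_2))$) and vector bundle modification, since the latter is performed with a spin$^c$-bundle $V$ and the Bott bundle construction commutes with tensoring by $C^*(\Gamma_2)$. For $r$, the cycle $(M,E_{C^*(\Gamma_2)})$ has $\partial M=\emptyset$, so it determines a mapping-cone cycle with vacuous boundary data; every geometric relation transfers tautologically. For $\delta$, I would observe that a mapping-cone bordism restricts to a bordism of the $C^*(\Gamma_1)$-cycles on the boundary, and likewise for modification.

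Next I would verify that the relevant compositions vanish by displaying explicit null-bordisms. The composition $\delta\circ r$ is zero on the nose because cycles in the image of $r$ have no boundary data. For $\phi_*\circ\delta$, a mapping-cone cycle $(W,(E_{C^*(\Gamma_2)},F_{C^*(\Gamma_1)},\alpha))$ itself provides a null-bordism: the $C^*(\Gamma_2)$-bundle $E_{C^*(\Gamma_2)}\to W$ is a $C^*(\Gamma_2)$-bordism between the empty cycle and $\phi_*(\partial W,F_{C^*(\Gamma_1)})=(\partial W, F_{C^*(\Gamma_1)}\otimes_\phi C^*(\Gamma_2))$, identified via $\alpha$ with $E_{C^*(\Gamma_2)}|_{\partial W}$. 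For $r\circ \phi_*$, the cylinder $W=M\times [0,1]$ carrying the pulled-back $C^*(\Gamma_2)$-bundle, with boundary data $F_{C^*(\Gamma_1)}=E_{C^*(\Gamma_1)}$ placed on one copy of $M$ and the canonical isomorphism $\alpha$, exhibits $r\circ\phi_*(M,E_{C^*(\Gamma_1)})$ as the boundary of a mapping-cone cycle-with-boundary, hence zero.

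The main obstacle lies in the reverse containments. Here I would adapt the normal bordism technique used in \cite{Jak, Rav} and already employed for Theorem \ref{relGeoKhomDomExaSeq}: given a cycle mapping to zero, the null-bordism witnessing this can be cut along a codimension-one submanifold to produce a preimage under the preceding map. The delicate point, which is precisely what \cite[Section 4]{DeeRZ} handles, is that this surgery must preserve the isomorphism data $\alpha$ at the boundary; the extension-of-scalars formalism allows the surgery datum to be transported coherently between the two $C^*$-algebras. This is the only substantive issue; once it is dealt with, exactness at all three spots is simultaneous, and the six-term cyclicity follows from the $\field{Z}/2$-grading on cycles by connected-component dimensions.
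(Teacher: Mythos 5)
The paper offers no proof of this theorem at all: it is imported verbatim as a special case of \cite[Theorem 4.21]{DeeRZ}, which is precisely your primary route, and your supplementary cycle-level checks of well-definedness and of the null-bordisms witnessing $\delta\circ r=0$, $\phi_*\circ\delta=0$ and $r\circ\phi_*=0$ are correct and consistent with the constructions in that reference. Your proposal is therefore correct and takes essentially the same approach as the paper.
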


The abelian group $K_*^{\textnormal{geo}}(pt;\phi)$ is the codomain of the assembly map, which is the main topic of this section. The analytic realization of cycles in $K_*^{\textnormal{geo}}(pt;\phi)$ is more complicated than the approach using localization algebras in $K$-homology. We address this issue below in Section \ref{IsoGeoToAna}.

\begin{remark}
\label{secondorientedremark}
Also the group $K_*^{\rm geo}(pt;\phi)$ can be modelled on oriented manifolds as in \cite{guentnerkhom,Kescont}, cf. Remark \ref{firstorientedremark}. For $C^*$-coefficients $A$ one uses $A$-Clifford bundles as in \cite[Definition 2.9]{DGII}. The reader is referred to \cite[Section 2.3]{DGII} for the isomorphism of the oriented model with the spin$^c$-model for geometric $K$-homology with $C^*$-algebra coefficients.

\end{remark}

\subsection{The geometric relative assembly map}
\label{geoassemblysubsec}

With the definition of the domain and codomain of map developed, we can define a geometric analogue of the relative assembly map. As above, $\phi: \Gamma_1 \rightarrow \Gamma_2$ denotes a group homomorphism and we also let $\phi$ denote its induced map on the full group $C^*$-algebras. The Mishchenko bundle $\mathcal{L}_{B\Gamma}:=E\Gamma\times_\Gamma C^*(\Gamma)\to B\Gamma$ of a discrete group $\Gamma$ was defined in Equation \eqref{mishdef} (see page \pageref{mishdef}). 

We start by comparing the bundles $\mathcal{L}_{B\Gamma_1}$ and $\mathcal{L}_{B\Gamma_2}$. From the discussion in Subsection \ref{notationseciont}, we can assume that $B\phi$ fits into the following commutative diagram:
\begin{equation}
\label{comdipsix}
\begin{CD}
E\Gamma_1 @>E \phi >> E\Gamma_2 \\
@VVp_1 V  @VVp_2 V \\
B\Gamma_1 @>B\phi >> B\Gamma_2 
\end{CD}
\end{equation}
where $E\phi$ and $B\phi$ are the continuous maps induced from $\phi$, $p_1$ and $p_2$ are the projection maps. Furthermore, the diagram intertwines the group actions of $\Gamma_1$ and $\Gamma_2$.

\begin{prop} 
\label{MisBunProp}
Given a $\Gamma_1$-equivariant lift $E\phi$ of $B\phi$ as in the diagram \eqref{comdipsix}, there exists an explicit isomorphism $\alpha_0 : \mathcal{L}_{B\Gamma_1} \otimes_{\phi} C^*(\Gamma_2) \rightarrow (B\phi)^*(\mathcal{L}_{B\Gamma_2})$.
\end{prop}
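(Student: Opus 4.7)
The plan is to construct $\alpha_0$ explicitly by using the $\Gamma_1$-equivariant lift $E\phi$ to identify both bundles with a common associated bundle over $B\Gamma_1$. Since associated-bundle formation commutes with fiberwise tensor products,
\[
\mathcal{L}_{B\Gamma_1} \otimes_{\phi} C^*(\Gamma_2) \;\cong\; E\Gamma_1 \times_{\Gamma_1} \bigl(C^*(\Gamma_1)\otimes_\phi C^*(\Gamma_2)\bigr) \;\cong\; E\Gamma_1 \times_{\Gamma_1} C^*(\Gamma_2),
\]
where the second isomorphism sends $a \otimes_\phi b$ to $\phi(a)b$ and the resulting $\Gamma_1$-action on $C^*(\Gamma_2)$ is by left multiplication through $\phi$. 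On the other hand, by definition of pullback, $(B\phi)^*\mathcal{L}_{B\Gamma_2} = B\Gamma_1 \times_{B\Gamma_2} \bigl(E\Gamma_2 \times_{\Gamma_2} C^*(\Gamma_2)\bigr)$.

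On the simplified left hand side, I would define $\alpha_0$ by the formula
\[
\alpha_0\bigl([e_1, b]\bigr) := \bigl(p_1(e_1),\, [E\phi(e_1), b]\bigr).
\]
The relation $p_2 \circ E\phi = B\phi \circ p_1$ built into diagram \eqref{comdipsix} ensures that $\alpha_0([e_1,b])$ lies in the pullback over $p_1(e_1) \in B\Gamma_1$. To verify well-definedness with respect to the $\Gamma_1$-action, replace $(e_1,b)$ by $(\gamma e_1, \phi(\gamma)b)$ for $\gamma\in\Gamma_1$. The first coordinate is unchanged since $p_1$ is $\Gamma_1$-invariant, and for the second,
\[
[E\phi(\gamma e_1), \phi(\gamma)b] = [\phi(\gamma) E\phi(e_1), \phi(\gamma)b] = [E\phi(e_1), b],
\]
where the first equality uses the $\Gamma_1$-equivariance of $E\phi$ (viewing $\Gamma_1$ as acting on $E\Gamma_2$ through $\phi$) and the second uses the $\Gamma_2$-equivalence defining $\mathcal{L}_{B\Gamma_2}$.

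Finally, I would check that $\alpha_0$ is an isomorphism of $C^*(\Gamma_2)$-bundles. Right $C^*(\Gamma_2)$-linearity is immediate since the right action on both bundles passes through the $C^*(\Gamma_2)$-factor, and continuity is inherited from continuity of $E\phi$. For the fiberwise statement, choose $x \in B\Gamma_1$ and a lift $e_1 \in p_1^{-1}(x)$; this trivializes the fiber at $x$ on the left as $C^*(\Gamma_2)$, while $E\phi(e_1)$ simultaneously trivializes the corresponding fiber of $(B\phi)^*\mathcal{L}_{B\Gamma_2}$ as $C^*(\Gamma_2)$. In these trivializations $\alpha_0$ is the identity map, hence a $C^*(\Gamma_2)$-linear isomorphism on each fiber, and therefore a bundle isomorphism. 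The conceptual content is simply that the $\Gamma_1$-equivariant lift $E\phi$ is exactly the datum required to identify the $\phi$-twist with the pullback along $B\phi$, so no serious obstacle arises; the only bookkeeping is keeping track of the two group actions and two tensor products.
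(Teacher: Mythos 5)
Your construction is correct and is essentially the paper's own proof: the same identification of $\mathcal{L}_{B\Gamma_1}\otimes_\phi C^*(\Gamma_2)$ with $E\Gamma_1\times_\phi C^*(\Gamma_2)$, and the same map $[e_1,b]\mapsto (p_1(e_1),[E\phi(e_1),b])$ built from $\kappa=p_1\times E\phi$ and the commuting square \eqref{comdipsix}. The only (immaterial) difference is how invertibility is checked: the paper writes down an explicit inverse and observes that the ambiguity in choosing a $\kappa$-preimage lies in $\ker(\phi)$, whereas you verify that $\alpha_0$ becomes the identity in the compatible local trivializations determined by a choice of $e_1$ and $E\phi(e_1)$.
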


\begin{proof}
We have the explicit identification
$$
\mathcal{L}_{B\Gamma_1} \otimes_{C^*(\Gamma_1)} C^*(\Gamma_2) =(E\Gamma_1\times_{\Gamma_1}C^*(\Gamma_1))\otimes_\phi C^*(\Gamma_2) \cong  E\Gamma_1 \times_{\phi} C^*(\Gamma_2),$$
where $E\Gamma_1 \times_{\phi} C^*(\Gamma_2)$ is the quotient of $E\Gamma_1 \times C^*(\Gamma_2)$ by the equivalence relation $(\gamma x,v)\sim (x,\phi(\gamma)v)$ for $x\in E\Gamma_1$, $v\in C^*(\Gamma_2)$ and $\gamma\in \Gamma_1$. We can identify
$$(B\phi)^*(\mathcal{L}_{B\Gamma_2}) \cong  (B\Gamma_1 \times_{B\Gamma_2} E\Gamma_2) \times_{\Gamma_2} C^*(\Gamma_2).$$
Let $\Psi:  E\Gamma_1 \times_{\phi} C^*(\Gamma_2) \rightarrow (B\Gamma_1 \times_{B\Gamma_2} E\Gamma_2) \times_{\Gamma_2} C^*(\Gamma_2)$ be the isomorphism of $C^*(\Gamma_2)$-bundles defined via
$$[z,v] \mapsto [\kappa(z), v]$$
where $\kappa:=p_1\times E\phi: E\Gamma_1 \rightarrow B\Gamma_1 \times_{B\Gamma_2} E \Gamma_2$. The map $\kappa$ is well defined due to the pullback diagram \eqref{comdipsix} and $\Psi$ is readily verified to be an isomorphism.
\end{proof}

\begin{define}
\label{defofassembly}
The geometric assembly map $\mu_{\phi}:K_*^{\textnormal{geo}}(B\phi) \rightarrow K_*^{\textnormal{geo}}(pt;\phi)$ is defined at the level of cycles via
$$(W,E,(f,g) ) \mapsto (W, (E \otimes_{\field{C}}f^*(\mathcal{L}_{B\Gamma_2}) ,E|_{\partial W} \otimes_{\field{C}}g^*(\mathcal{L}_{B\Gamma_1}) ,\alpha))$$
where $\alpha$ is the isomorphism of $C^*(\Gamma_2)$-bundles:
\begin{eqnarray*}
(E \otimes_{\field{C}}f^*(\mathcal{L}_{B\Gamma_2}))|_{\partial W} & \cong & E|_{\partial W} \otimes_{\field{C}}(f|_{\partial W})^*(\mathcal{L}_{B\Gamma_2}) \\
& \cong & E|_{\partial W} \otimes_{\field{C}}g^* ( (B\phi)^*( \mathcal{L}_{B\Gamma_2})) \\
& \xrightarrow{\mathrm{id}_{E|_{\partial W}}\otimes g^*\alpha_0} & E|_{\partial W} \otimes_{\field{C}}g^*(\mathcal{L}_{B_1} \otimes_{\phi} C^*(\Gamma_2)) \\
& \cong & (E|_{\partial W} \otimes_{\field{C}}g^*(\mathcal{L}_{B_1}) ) \otimes_{\phi} C^*(\Gamma_2).
\end{eqnarray*}
The isomorphism $\alpha_0$ is the isomorphism from Proposition \ref{MisBunProp}.
\end{define}

\begin{prop}
The map $\mu_{\phi}$ is well-defined.
\end{prop}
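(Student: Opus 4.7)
The plan is to verify that $\mu_\phi$ respects each of the three equivalence relations defining $K_*^{\rm geo}(B\phi)$, namely disjoint union/direct sum, bordism, and vector bundle modification. Before doing so, I would check that the output is indeed a cycle for $K_*^{\rm geo}(pt;\phi)$ in the sense of Definition \ref{geoCycMappingCone}: the manifold $W$ with its spin$^c$-structure is unchanged; the bundles $E \otimes_\field{C} f^*\mathcal{L}_{B\Gamma_2}$ and $E|_{\partial W} \otimes_\field{C} g^*\mathcal{L}_{B\Gamma_1}$ are honest $C^*(\Gamma_2)$- and $C^*(\Gamma_1)$-bundles respectively; and the required boundary isomorphism $\alpha$ is produced from the chain of identifications spelled out in Definition \ref{defofassembly}, invoking Proposition \ref{MisBunProp} for the key step $\alpha_0$.

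The disjoint union/direct sum relation is essentially immediate, because $E \mapsto E \otimes_\field{C} f^*\mathcal{L}_{B\Gamma_2}$ distributes over direct sums in $E$ and is compatible with disjoint union of manifolds, and $\alpha$ is built naturally from these operations. For bordism, given a cycle with boundary $((Z,W), F, (h_2, h_1))$ for $K_*^{\rm geo}(B\phi)$ as in Definition \ref{bordisminrelkhm}, I would form the candidate bordism $(Z, (F \otimes_\field{C} h_2^*\mathcal{L}_{B\Gamma_2}, F|_{\partial Z \setminus W^\circ} \otimes_\field{C} h_1^*\mathcal{L}_{B\Gamma_1}, \tilde\alpha))$ in the mapping-cone model of \cite{DeeRZ}, where $\tilde\alpha$ is built from Proposition \ref{MisBunProp} in exactly the same manner as $\alpha$. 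Restriction of the bundles on $Z$ and on $\partial Z \setminus W^\circ$ to the regular domain $W \subseteq \partial Z$ recovers $\mu_\phi$ applied to the boundary cycle of the original bordism, so this witnesses the vanishing of $\mu_\phi$ on boundaries in $K_*^{\rm geo}(pt;\phi)$.

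Finally, for vector bundle modification by a spin$^c$-bundle $V \to W$ of even rank, applying $\mu_\phi$ to $(S(V \oplus {\bf 1}_\field{R}), \pi^*E \otimes B, (f \circ p, g \circ p))$ yields the bundle $\pi^*E \otimes B \otimes (f \circ p)^*\mathcal{L}_{B\Gamma_2}$, while vector bundle modifying $\mu_\phi(W, E, (f,g))$ by the same $V$ in $K_*^{\rm geo}(pt;\phi)$ yields $\pi^*(E \otimes_\field{C} f^*\mathcal{L}_{B\Gamma_2}) \otimes B$. These agree by naturality of pullback, and the analogous observation on the boundary together with functoriality of $\alpha_0$ from Proposition \ref{MisBunProp} with respect to pullback guarantees that the two boundary isomorphisms coincide. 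The only nontrivial aspect of the entire argument is the bookkeeping needed to verify that the chain of canonical identifications defining $\alpha$ in Definition \ref{defofassembly} behaves naturally under restriction to the boundary and pullback along $\pi$; all ingredients are standard, and the main work is simply to unwind the identifications carefully in each case.
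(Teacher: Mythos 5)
Your proposal is correct and follows essentially the same route as the paper: the disjoint union/direct sum case is immediate, bordism is handled by tensoring the bordism data $((Z,W),F,(h_2,h_1))$ with the pulled-back Mishchenko bundles to produce a bordism in the mapping-cone model, and vector bundle modification follows from the identity $\mu_{\phi}((W,E,(f,g))^V)=(\mu_{\phi}(W,E,(f,g)))^V$. The extra attention you give to checking that the output is a cycle and that the chain of identifications defining $\alpha$ behaves naturally under restriction and pullback is exactly the bookkeeping the paper leaves implicit.
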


\begin{proof}
We must show that the map respects each of the three relations. The case of the disjoint union/direct sum relation is trivial. 

For the bordism relation, suppose that $((Z,W),F,(h_2,h_1))$ is a cycle with boundary which has boundary $(W,E,(f,g))$. Then 
\begin{eqnarray*}
 \mu_{\phi}(W,E,(f,g)) & = & (W, (E \otimes_{\field{C}}f^*(\mathcal{L}_{B\Gamma_2}) ,E|_{\partial W} \otimes_{\field{C}}g^*(\mathcal{L}_{B\Gamma_1}) ,\alpha)) \\
& = & \partial ((Z, W), (F\otimes_{\field{C}}h_2^*(\mathcal{L}_{B\Gamma_2}), F|_{\partial Z - {\rm int}(W)}\otimes_{\field{C}}h_1^*(\mathcal{L}_{B\Gamma_1}), \tilde{\alpha})) 
\end{eqnarray*} 
where $\tilde{\alpha}$ is constructed in the same way as $\alpha$. Hence $\mu_{\phi}(W,E,(f,g))$ is a boundary whenever $(W,E,(f,g))$ is a boundary. 

The case of vector bundle modification follows by observing that, if $V$ is a spin$^c$-vector bundle with even dimensional fibers over $W$, then $\mu_{\phi}( (W,E,(f,g))^V) = (\mu_{\phi}(W,E,(f,g)))^V$.
\end{proof}

\begin{theorem}
\label{sesforbphi} 
(compare with Theorem 2.17 in \cite{CWY}) \\
Let $\phi:\Gamma_1\to \Gamma_2$ be a group homomorphism and $h:Y\to X$, $f:X\to B\Gamma_2$ and $g:Y\to B\Gamma_1$ be continuous mappings such that $f\circ h=g\circ B\phi$. The mapping $\mu_{\phi,h}:=\mu_\phi\circ (f,g)_*$ fits into a commuting diagram with exact rows:
\small
\begin{center}
 \minCDarrowwidth15pt
$\begin{CD}
@>>> K_*^{\textnormal{geo}}(Y) @>h_*>> K_*^{\textnormal{geo}}(X) @>r>> K_*^{\textnormal{geo}}(h) @>\delta >> K_{*+1}^{\textnormal{geo}}(Y) @>>> \\
@. @VV \mu_{Y} V @VV\mu_{X}V  @VV\mu_{\phi,h} V  @VV\mu_{Y} V \\
@>>> K_*^{\textnormal{geo}}(pt;C^*(\Gamma_1)) @>\phi >> K_*^{\textnormal{geo}}(pt;C^*(\Gamma_2)) @>r>> K_*^{\textnormal{geo}}(pt;\phi) @>\delta>> K_{*+1}^{\textnormal{geo}}(pt;C^*(\Gamma_1)) @>>> \\
\end{CD}$
\end{center}
\normalsize
where 
\begin{enumerate}
\item $\mu_X :K_*(X) \rightarrow K_*(pt;C^*(\Gamma_2))$ is the composition of $f_*$ with the free assembly map as defined at the level of geometric cycles in the introduction (see Equation \eqref{geometricassemblyequation} on page \pageref{geometricassemblyequation}), $\mu_Y$ is defined similarly;
\item the top long exact sequence is from the statement of Theorem \ref{relGeoKhomDomExaSeq};
\item the bottom long exact sequence is from Theorem \ref{exactonk}.
\end{enumerate}
In particular, if the map $\mu$ is an isomorphism for both $\Gamma_1$ and $\Gamma_2$, then $\mu_{\phi}$ is also an isomorphism.
\end{theorem}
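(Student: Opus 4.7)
The plan is to reduce the theorem to two pieces: the exactness of the two rows, and the commutativity of the three squares in the diagram, followed by a final appeal to the five lemma for the last assertion.

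Exactness of the rows is immediate from the results cited: the top row is the content of Theorem \ref{relGeoKhomDomExaSeq} applied to $h:Y\to X$, and the bottom row is Theorem \ref{exactonk} applied to $\phi:C^*(\Gamma_1)\to C^*(\Gamma_2)$. So the substance of the proof lies in checking the three squares. All three can be verified directly on cycles, since every map in sight is defined at the cycle level.

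For the middle square, a cycle $(M,E,\psi)\in K_*^{\textnormal{geo}}(X)$ without boundary maps under $\mu_{\phi,h}\circ r$ to $(M,(E\otimes (f\circ\psi)^*\mathcal{L}_{B\Gamma_2},\emptyset,\emptyset))$, which is manifestly the same as $r\circ\mu_X$. For the right square, starting from $(W,E,(\psi_1,\psi_2))\in K_*^{\textnormal{geo}}(h)$, the composition $\delta\circ\mu_{\phi,h}$ returns the boundary cycle $(\partial W, E|_{\partial W}\otimes (g\circ\psi_2)^*\mathcal{L}_{B\Gamma_1})$, precisely what $\mu_Y\circ\delta$ produces. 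The leftmost square is the only one requiring any real work: for $(M,E,\psi)\in K_*^{\textnormal{geo}}(Y)$, one has $\mu_X\circ h_*(M,E,\psi)=(M, E\otimes(f\circ h\circ\psi)^*\mathcal{L}_{B\Gamma_2})$, while $\phi_*\circ\mu_Y$ yields $(M, E\otimes(g\circ\psi)^*\mathcal{L}_{B\Gamma_1}\otimes_\phi C^*(\Gamma_2))$. Using the compatibility relation $f\circ h=B\phi\circ g$ and pulling back the isomorphism $\alpha_0$ of Proposition \ref{MisBunProp} along $g\circ\psi$ identifies these as isomorphic $C^*(\Gamma_2)$-bundles, so the two cycles agree in $K_*^{\textnormal{geo}}(pt;C^*(\Gamma_2))$.

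The hardest bookkeeping step is the leftmost square, since one must track the isomorphism $\alpha_0:\mathcal{L}_{B\Gamma_1}\otimes_\phi C^*(\Gamma_2)\to(B\phi)^*\mathcal{L}_{B\Gamma_2}$ rather than an equality of bundles; here the commutative diagram \eqref{comdipsix} of $\Gamma_1$-equivariant maps and the explicit form of $\alpha_0$ constructed in Proposition \ref{MisBunProp} do all the work. Once the three squares are in place, the final claim follows from the five lemma applied to each degree $*$ of the resulting long-exact ladder: if $\mu_X$ and $\mu_Y$ are isomorphisms (which, in the special case $X=B\Gamma_2$, $Y=B\Gamma_1$, $f=\mathrm{id}$, $g=\mathrm{id}$, $h=B\phi$, amounts to the Baum--Connes assembly being an isomorphism for $\Gamma_1$ and $\Gamma_2$), then the four outer vertical maps in any three-step segment of the ladder are isomorphisms and hence so is $\mu_{\phi,h}=\mu_\phi$.
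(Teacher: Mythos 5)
Your proposal is correct and follows essentially the same route as the paper: the paper's proof simply observes that all maps are defined at the level of cycles, verifies the middle square explicitly, and omits the rest, while you additionally spell out the right square, the $\alpha_0$-bookkeeping in the left square (the only nontrivial check), and the five-lemma argument for the final claim. The only cosmetic point is that the compatibility condition should read $f\circ h = B\phi\circ g$ (as you use it), correcting a typo in the statement.
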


\begin{proof}
The proof follows directly from the definitions of the maps: they are each defined at the level of cycles and the diagram actually commutes at the level of cycles. For example, if $(M, E, f)$ is a cycle in $K_*^{\textnormal{geo}}(B\Gamma_2)$, then 
$$ (\mu_{\phi} \circ r)(M, E, f) = (M, (E\otimes f^*(\mathcal{L}_{B\Gamma_2}), \emptyset))= (r \circ \mu_{\Gamma_2})(M,E,f)$$
The remaining details of the proof are omitted.
\end{proof}

\section{The isomorphism between the geometric and analytic realizations of the mapping cone} 
\label{IsoGeoToAna}

Our first goal is the construction of a suitable isomorphism $K^{\textnormal{geo}}_*(pt;\phi)\cong \linebreak[4] KK^{*}(\field{C}, SC_{\phi})$, where $B_1$ and $B_2$ are unital $C^*$-algebras and $\phi: B_1 \rightarrow B_2$ is a unital $*$-homomorphism. In fact, in \cite{DeeMappingCone}, an abstract isomorphism between $K^{\textnormal{geo}}_*(X;\phi)$ and $KK^{*}(C(X), SC_{\phi})$ for $X$ a finite $CW$-complex is constructed. The abstract isomorphism from \cite{DeeMappingCone} is defined from a type of relative topological index and is difficult to use in analytic applications. The isomorphism we aim at constructing extends an analytic construction from \cite{DeeMappingCone}: under the assumption that the induced map $\phi_*: K_*(B_1) \rightarrow K_*(B_2)$ is injective, it was shown in \cite{DeeMappingCone} that the higher Atiyah-Patodi-Singer index induces an isomorphism from $K^{\textnormal{geo}}_*(pt;\phi)$ to $KK^*(\field{C}, SC_{\phi})$. The reader is directed to \cite[Theorems 4.6 and 4.7]{DeeMappingCone} for the precise statement. However, in general, the higher Atiyah-Patodi-Singer index does {\it not} induce an isomorphism from $K^{\textnormal{geo}}_*(pt;\phi)$ to $KK^{*}(\field{C}, SC_{\phi})$, it does not even produce a well defined mapping \footnote{ Indeed, the image in $KK^{*}(\field{C}, SC_{\phi})$ of the Atiyah-Patodi-Singer index of a geometric cycle depends on the choice of trivializing operator if $\ker \phi_*\neq 0$. Additionally, the group $KK^{*}(\field{C}, SC_{\phi})$ is not exhausted by Atiyah-Patodi-Singer indices if $\ker\phi_*\neq 0$ because $x\in KK^{*}(\field{C}, SC_{\phi})$ is an Atiyah-Patodi-Singer index if and only if $\delta(x)=0$, i.e. $x\in \mathrm{im}\, r$. Thus $\mathrm{im} \,\delta=\ker \phi_*$ obstructs the Atiyah-Patodi-Singer indices exhausting $KK^{*}(\field{C}, SC_{\phi})$.}. 
\par

\subsection{Analytic realization of cycles on mapping cone} 
Our goal is a generalization of the construction in \cite{DeeMappingCone} to an \emph{analytically} defined isomorphism between $K^{\textnormal{geo}}_*(pt;\phi)$ and $KK^{*}(\field{C},SC_{\phi})$ with no assumptions on $\phi_*$. As mentioned above, this result will be used to relate the geometrically defined assembly map of Section \ref{GeoMap} to the work of Chang, Weinberger, and Yu (i.e., the map, $\mu_{CWY}$, discussed in Section \ref{CWYmap}). The general idea behind the isomorphism $K^{\textnormal{geo}}_*(pt;\phi)\cong KK^{*}(\field{C}, SC_{\phi})$ is to map a cycle for $K^{\textnormal{geo}}_*(pt;\phi)$ (equipped with additional geometric data) to a class in $KK^*(\field{C},B_2)$ using higher Atiyah-Patodi-Singer index theory. The associated class in $KK^*(\field{C},SC_{\phi})$ in general depends heavily on these choices. To correct for the choices made, we construct an additional term in $KK^{*}(\field{C},SC_{\phi})$ which also depends heavily on said choices, it is defined purely using data on boundary of the cycle. In particular, in the rather special case when the boundary of the cycle in $K^{\textnormal{geo}}_*(pt;\phi)$ is empty, this construction is compatible with the isomorphism from $ K_*^{\textnormal{geo}}(pt;B_2)$ to $K_*(B_2)$ defined via higher index theory.
\par

Although we are mostly interested in the case of a $*$-homomorphism induced $\phi: \Gamma_1 \rightarrow \Gamma_2$, the results of this section are more general. As such, let $\phi: B_1 \rightarrow B_2$ be a unital $*$-homomorphism between unital $C^*$-algebras. For a Riemannian spin$^c$-manifold with boundary $W$, we let $S_W\to W$ denote the associated Clifford bundle of complex spinors.

\begin{define}
\label{diracsandtrivi}
A choice of Dirac operators on a cycle $(W,(E_{B_2},F_{B_1},\alpha))$ for $K^{\textnormal{geo}}_*(pt;\phi)$ is a pair $(D_E^W,D_F^{\partial W})$ of a spin$^c$-Dirac operator $D_F^{\partial W}$ on $F_{B_1}\to \partial W$ and a spin$^c$-Dirac operator $D_E^W$ on $E_{B_2}\to W$ being of product type near $\partial W$ with boundary operator $D_E^{\partial W}=\alpha^*(D_F^{\partial W}\otimes_\phi B_2)$.

A trivializing operator for a cycle $(W,(E_{B_2},F_{B_1},\alpha),(D_E^W,D_F^{\partial W}))$ with Dirac operators is a self-adjoint $A\in \Psi^{-\infty}_{B_2}(\partial W; S_{\partial W}\otimes E_{B_2}|_{\partial W})$  such that $D_E^{\partial W}+A$ is invertible. We also refer to $A$ as a trivializing operator for $D_E^{\partial W}$.
\end{define}

By \cite[Theorem 3]{LP} and \cite[Proposition 10]{LPGAFA} any cycle with Dirac operators admits a trivializing operator. The reason for introducing trivializing operators is that they are unavoidable when doing higher Atiyah-Patodi-Singer index theory because the Dirac operator on the boundary is in general not invertible. We first construct a class in $K_1(C_\phi)$ from an even-dimensional cycle with Dirac operators and trivializing operator. We then prove that the constructed class in $K_1(C_\phi)$ is independent of involved choices. The construction for odd-dimensional cycles follows by a formal suspension, see \cite[Section 3.2]{LP} or \cite[Section 2]{zeid14}. 

Take $\chi,\tilde{\chi}\in C^\infty((0,1],\mathbb{R})$ such that $\chi\geq 1$ and 
\begin{equation}
\label{chichitilde}
\chi(t)=\begin{cases} t^{-1}, \; &\mbox{near $t=0$}\\ 1, \; &\mbox{near $t=1$}\end{cases}\qquad \mbox{and} \qquad \tilde{\chi}(t)=\begin{cases} t^{-1}, \; &\mbox{near $t=0$}\\ 0, \; &\mbox{near $t=1$}\end{cases}.
\end{equation}
The set of such pairs $(\chi,\tilde{\chi})$ form a convex subset of $C^\infty(0,1]\times C^\infty(0,1]$ and is therefore path connected. Define the following families of operators on $S_{\partial W}\otimes E|_{\partial W}\to \partial W$: 
\begin{equation}
\label{pathofops}
\hat{A}_t:=\tilde{\chi}(t)A\quad\mbox{and}\quad \hat{D}_{E,t}^\partial:=\chi(t)D^{\partial W}_E,\quad \mbox{for}\;\; t\in (0,1].
\end{equation}
For any $t\in (0,1]$, $\hat{D}_{E,t}^\partial+\hat{A}_t$ is a self-adjoint elliptic element in the Fomenko-Mishchenko calculus $\Psi^{*}_{B_2}(\partial W; S_{\partial W}\otimes E_{B_2}|_{\partial W})$. Hence it is a self-adjoint regular operator with compact resolvent on the $B_2$-Hilbert module $L^2(\partial W;S_{\partial W}\otimes E_{B_2})$. Let $c$ denote the Cayley transform:
\begin{equation}
\label{cayleydeef}
c(x):=\frac{ix+1}{ix-1}=1+2(ix-1)^{-1}.
\end{equation}

\begin{remark}
If $(W,(E_{B_2},F_{B_1},\alpha))$ is a cycle, $\alpha$ induces a $*$-homomorphism $\tilde{\alpha}:\mathbb{K}_{B_1}(L^2(\partial W,S_{\partial W}\otimes F_{B_1}))\to \mathbbm{K}_{B_2}(L^2(\partial W;S_{\partial W}\otimes E_{B_2}))$. 
\end{remark}

\begin{prop}
\label{firstunit}
The operator-valued function $c(\hat{D}^\partial_E+\hat{A})$ given by
$$(0,1]\ni t\mapsto c(\hat{D}^\partial_{E,t}+\hat{A}_t)\in 1+\mathbbm{K}_{B_2}(L^2(\partial W;S_{\partial W}\otimes E_{B_2})),$$
defines an element in $1+C_0((0,1],\mathbbm{K}_{B_2}(L^2(\partial W;S_{\partial W}\otimes E_{B_2})))$. Moreover, 
$$c(\hat{D}^\partial_E+\hat{A})(1)=\tilde{\alpha}(c(D_F)).$$
\end{prop}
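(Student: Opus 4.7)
The plan is to verify four claims: (i) pointwise compactness of $c(\hat{D}^\partial_{E,t}+\hat{A}_t)-1$, (ii) norm continuity in $t\in(0,1]$, (iii) the norm vanishing of $c(\hat{D}^\partial_{E,t}+\hat{A}_t)-1$ as $t\to 0^+$, and (iv) the identification at $t=1$.

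For (i) and (iv), set $T_t:=\hat{D}^\partial_{E,t}+\hat{A}_t$. For every $t\in (0,1]$, $T_t$ is a self-adjoint elliptic element of the Fomenko--Mishchenko calculus on a closed manifold, hence a self-adjoint regular operator on the $B_2$-Hilbert module $L^2(\partial W;S_{\partial W}\otimes E_{B_2})$ with $B_2$-compact resolvent. The identity $c(x)-1=2(ix-1)^{-1}$ together with the functional calculus for regular self-adjoint operators yields $c(T_t)-1=2(iT_t-1)^{-1}\in \mathbbm{K}_{B_2}(L^2(\partial W;S_{\partial W}\otimes E_{B_2}))$. For (iv), observe that $\chi(1)=1$ and $\tilde{\chi}(1)=0$, so $T_1=D_E^{\partial W}$; by hypothesis $D_E^{\partial W}=\alpha^*(D_F^{\partial W}\otimes_\phi B_2)$, and since $\tilde{\alpha}$ intertwines the functional calculus of $D_F^{\partial W}\otimes_\phi 1$ and $D_E^{\partial W}$, we obtain $c(T_1)=\tilde{\alpha}(c(D_F)\otimes_\phi 1_{B_2})$.

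For (ii), I would use the resolvent identity
\begin{equation*}
(iT_t-1)^{-1}-(iT_s-1)^{-1}=(iT_t-1)^{-1}\bigl(i(T_s-T_t)\bigr)(iT_s-1)^{-1}.
\end{equation*}
Since $T_t-T_s=(\chi(t)-\chi(s))D_E^{\partial W}+(\tilde{\chi}(t)-\tilde{\chi}(s))A$ and $A$ is a smoothing operator, one combines the uniform bound $\|(iT_t-1)^{-1}\|\leq 1$ with the fact that $D_E^{\partial W}(iT_s-1)^{-1}$ is bounded uniformly on compact subsets of $(0,1]$ (by standard estimates in the Fomenko--Mishchenko calculus, using that $\chi$ is bounded below by $1$) to conclude norm continuity of $t\mapsto c(T_t)-1$ on $(0,1]$.

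For (iii), which is the crux of the argument, I would exploit the rescaling built into $\chi$ and $\tilde{\chi}$. On a neighbourhood $(0,\epsilon]$ of $0$ we have $T_t=t^{-1}(D_E^{\partial W}+A)=:t^{-1}T$ where $T$ is invertible by the trivializing property of $A$. Thus
\begin{equation*}
c(T_t)-1=2(it^{-1}T-1)^{-1}=2t\,(iT-t)^{-1}.
\end{equation*}
Because $T$ is invertible with $B_2$-compact resolvent, $(iT-t)^{-1}$ is uniformly norm bounded as $t\to 0^+$ (converging in norm to $(iT)^{-1}$), so $\|c(T_t)-1\|=O(t)\to 0$. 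Combined with (i) and (ii), this shows that $c(\hat{D}^\partial_E+\hat{A})-1$ is a continuous function $(0,1]\to \mathbbm{K}_{B_2}$ vanishing at $0$, hence defines an element of $C_0((0,1],\mathbbm{K}_{B_2})$.

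The main obstacle is verifying (ii)--(iii) with the appropriate care that the Fomenko--Mishchenko pseudodifferential calculus supplies bounded operators on the Hilbert module level; everything else is a transparent consequence of the functional-calculus identity $c(x)-1=2(ix-1)^{-1}$ and the specific shape of $\chi,\tilde{\chi}$ at the endpoints.
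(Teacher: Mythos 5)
Your proposal is correct and follows essentially the same route as the paper: pointwise $B_2$-compactness and continuity via $c(x)-1=2(ix-1)^{-1}$ together with the compact resolvent, and the vanishing as $t\to 0$ from the rescaling $T_t=t^{-1}(D_E^{\partial W}+A)$ near $0$ combined with the invertibility supplied by the trivializing operator (the paper phrases this as the scalar estimate $c(t^{-1}x)=1+O(t)$ uniformly on the spectrum plus functional calculus, which is the same computation as your $2t(iT-t)^{-1}$). You additionally spell out the continuity estimate and the $t=1$ identification, which the paper leaves implicit.
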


\begin{proof}
The formula \eqref{cayleydeef} and the compact resolvent of $\hat{D}^\partial_{E,t}+\hat{A}_t$ for $t\in (0,1]$ imply that $c(\hat{D}^\partial_E+\hat{A})\in C((0,1],1+\mathbbm{K}_{B_2}(L^2(\partial W;S_{\partial W}\otimes E_{B_2})))$. For $|x|\geq 1$, we have the asymptotic formula $c(t^{-1}x)=1+O(t)$ uniformly in $x$ as $t\to 0$. Therefore, functional calculus for self-adjoint regular operators implies that $c(\hat{D}^\partial_{E,t}+\hat{A}_t)\to 1$ in norm as $t\to 0$ and $c(\hat{D}^\partial_E+\hat{A})\in 1+C_0((0,1],\mathbbm{K}_{B_2}(L^2(\partial W;S_{\partial W}\otimes E_{B_2}))$
\end{proof}

Recall that an $(A,B)$-Hilbert $C^*$-module $E_B$ is called a correspondence if $A$ acts as $B$-compact operators on $E_B$. A correspondence $E_B$ gives rise to a class $[(E_B,0)]\in KK_0(A,B)$ and a mapping $K_*(B)\to K_*(A)$.

\begin{remark}
The $C_\phi$-Hilbert $C^*$-module 
$$\mathcal{M}_\alpha:=\{\xi\oplus \eta\in C_0((0,1],L^2(\partial W;S_{\partial W}\otimes E_{B_2}))\oplus L^2(\partial W,S_{\partial W}\otimes F_{B_1}):\; \xi(1)=\alpha^*\eta\},$$
induces a correspondence from $C_{\tilde{\alpha}}$ to $C_\phi$. If $E_{B_2}$ and $F_{B_1}$ are full, then $\mathcal{M}_\alpha$ is a Morita equivalence. We tacitly identify elements in $K_*(C_{\tilde{\alpha}})$ with their image in $K_*(C_\phi)$ under this correspondence.
\end{remark}

\begin{define}
We define the Cayley transform of $(W,(E_{B_2},F_{B_1},\alpha),(D_E^W,D_F^{\partial W}),A)$ as the class 
$$c(W,(E_{B_2},F_{B_1},\alpha),(D_E^W,D_F^{\partial W}),A):=[c(\hat{D}^\partial_E+\hat{A})\oplus c(D_F)]\in K_1(C_\phi).$$
\end{define} 

Since $\chi$ and $\tilde{\chi}$ are chosen from path connected spaces, we have the following:

\begin{prop}
The class $c(W,(E_{B_2},F_{B_1},\alpha),(D_E^W,D_F^{\partial W}),A)$ does not depend on the choice of smooth functions $\chi$ and $\tilde{\chi}$ satisfying \eqref{chichitilde}.
\end{prop}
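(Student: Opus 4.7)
The plan is to exhibit a norm-continuous homotopy in $K_1(C_\phi)$ between the unitaries associated to two arbitrary admissible choices $(\chi_0,\tilde{\chi}_0)$ and $(\chi_1,\tilde{\chi}_1)$ satisfying \eqref{chichitilde}. The key enabling observation is that each of the defining conditions on $\chi$ (namely $\chi\geq 1$, $\chi(t)=t^{-1}$ near $t=0$, $\chi(t)=1$ near $t=1$) and the analogous ones on $\tilde{\chi}$ is preserved under convex combinations. Consequently, the straight-line path $\chi_s:=(1-s)\chi_0+s\chi_1$, $\tilde{\chi}_s:=(1-s)\tilde{\chi}_0+s\tilde{\chi}_1$, $s\in[0,1]$, lies in the admissible set for every $s$ and so interpolates continuously between the two choices.

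For each $s\in[0,1]$ I would form the self-adjoint regular operator $T_{s,t}:=\chi_s(t)D_E^{\partial W}+\tilde{\chi}_s(t)A$ on $L^2(\partial W;S_{\partial W}\otimes E_{B_2})$ and repeat the construction preceding Proposition \ref{firstunit} to produce a unitary $U_s:=c(T_{s,\cdot})\oplus c(D_F)$, which via the Morita correspondence $\mathcal{M}_\alpha$ lives in the unitization of $C_\phi$. I would then verify that $s\mapsto U_s$ is norm continuous. This naturally splits into two regimes. Near $t=0$ and near $t=1$ the functions $\chi_s(t),\tilde{\chi}_s(t)$ are completely pinned down by \eqref{chichitilde} and in particular independent of $s$; concretely, $T_{s,t}=t^{-1}(D_E^{\partial W}+A)$ near $t=0$ and $T_{s,t}=D_E^{\partial W}$ near $t=1$. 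On any compact middle interval $[a,b]\subset(0,1)$ the variation in $s$ of $T_{s,t}$ is through a jointly continuous positive scalar rescaling of $D_E^{\partial W}$ plus a jointly continuous bounded self-adjoint perturbation $\tilde{\chi}_s(t)A$. Both types of variation are norm-continuous at the level of Cayley transforms via the resolvent identity and functional calculus for self-adjoint regular operators, and joint compactness of $[a,b]\times[0,1]$ upgrades pointwise continuity in $t$ to uniform-in-$t$ continuity in $s$.

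Assembling the two regimes and invoking homotopy invariance of $K_1$ yields the claimed independence. The step I expect to require most care is uniform control of $c(T_{s,t})-1\to 0$ as $t\to 0$, since this is the singular end of the interval; however the point is that $T_{s,t}$ is $s$-independent near $t=0$, so the asymptotic $c(t^{-1}x)=1+O(t)$ uniform in $|x|\geq 1$ together with invertibility of $D_E^{\partial W}+A$ — exactly the input already used in the proof of Proposition \ref{firstunit} — delivers the required decay uniformly in $s$ with no extra work. Thus the potential obstacle dissolves, and the homotopy is valid in $1+C_0((0,1],\mathbbm{K}_{B_2}(L^2(\partial W;S_{\partial W}\otimes E_{B_2})))\oplus c(D_F)$, giving $[U_0]=[U_1]$ in $K_1(C_\phi)$.
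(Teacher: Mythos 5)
Your argument is correct and is exactly the paper's (the paper proves this proposition in one line, by noting that the admissible sets of $\chi$ and $\tilde{\chi}$ are convex, hence path connected, and appealing to homotopy invariance of $K_1$); you have simply spelled out the convex homotopy and the norm-continuity checks in detail. No gaps.
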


For an odd operator $T$ on a graded Hilbert $C^*$-module $\pmb{E}=\pmb{E}^+\oplus \pmb{E}^-$, we write $T^+$ for the induced operator $\pmb{E}^+\to \pmb{E}^-$.

\begin{define}
\label{notationaps}
Let $B$ be a $C^*$-algebra and $W$ a compact manifold with boundary. Assume that $D^W_{E_B}$ is a Dirac operator twisted by a $B$-bundle $E_B\to W$ of product type near $\partial W$ and that $A\in \Psi^{-\infty}(\partial W, S_{\partial W} \otimes E_B|_{\partial W})$ is a trivializing operator for the boundary operator $D^{\partial W}_{E_B}$ (see Definition \ref{diracsandtrivi}). The APS-realization $D^W_{E_B}(A)$ is a regular self-adjoint odd $B$-Fredholm operator densely defined on $L^2(W,E_B\otimes S_W)$ defined by declaring $D^W_{E_B}(A)^+$ to be the restriction of the densely defined operator $(D^W_{E_B})^+$ between the $B$-Hilbert $C^*$-modules $L^2(W,E_B\otimes S_W^+)\to L^2(W,E_B\otimes S_W^-)$ to the domain defined from APS-boundary conditions
$$\mathrm{Dom}(D^W_{E_B}(A)):=\{\xi \in H^1(W,E_B\otimes S_W^+): \chi_{[0,\infty)}(D^{\partial W}_{E_B}+A)\xi|_{\partial W}=0\},$$
where we identify $S_W^+|_{\partial W}=S_{\partial W}$. We define ${\rm ind}_{APS}(D^W_{E_B},A):=\mathrm{ind}_B(D^W_{E_B}(A)^+)\in K_*(B)$ as the index of the APS-realization of $D^W_{E_B}$ associated with $A$. 
\end{define}

For details regarding higher APS-theory, see \cite[Section 3]{LP}. The maximal domain of the Dirac operator $D^W_{E_B}$ on $L^2(W,E_B\otimes S_W)$ is poorly behaved; it is the APS-boundary conditions that make it Fredholm. However, without additional assumptions on the Dirac operator or a trivializing operator it is not well defined for a general $C^*$-algebra $B$. Let $j_*:K_0(B_2)\cong K_1(C_0(0,1)\otimes B_2)\to K_1(C_\phi)$ denote the composition of the Bott mapping and the mapping induced from the inclusion $i:C_0(0,1)\otimes B_2\to C_\phi$. 

\begin{lemma}
\label{cyctoclasses}
Let $(W,(E_{B_2},F_{B_1},\alpha),(D_E^W,D_F^{\partial W}))$ be a cycle with Dirac operators and $A$ and $A'$ two trivializing operators. Then 
\begin{align*}
c(W,(E_{B_2},F_{B_1},\alpha),&(D_E^W,D_F^{\partial W}),A)\\
-c(W,(E_{B_2},&F_{B_1},\alpha),(D_E^W,D_F^{\partial W}),A')\\
=&j_*\left(\mathrm{ind}_{APS}(D_E^W,A')-\mathrm{ind}_{APS}(D^W_E,A)\right).
\end{align*}
In particular, the class 
\begin{align}
\nonumber
\Phi_{{\rm cone}}(W,(E_{B_2},&F_{B_1},\alpha))\\
\label{definofphicone}&:=c(W,(E_{B_2},F_{B_1},\alpha),(D_E^W,D_F^{\partial W}),A)+j_*\mathrm{ind}_{APS}(D_E^W,A),
\end{align}
is independent of choice of Dirac operators and trivializing operator.
\end{lemma}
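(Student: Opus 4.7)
The plan is to prove the displayed identity for the change of trivializing operator; the asserted independence of $\Phi_{\rm cone}$ from the underlying choices then follows by a short algebraic consequence. Fix a cycle with Dirac operators $(D^W_E,D^{\partial W}_F)$ and two trivializing operators $A,A'$. Since $\tilde{\chi}(1)=0$ and $\chi(1)=1$, both Cayley-transform paths $c(\hat{D}^\partial_E+\hat{A})$ and $c(\hat{D}^\partial_E+\hat{A}')$ evaluate to $\tilde{\alpha}(c(D_F)\otimes_\phi 1_{B_2})$ at $t=1$ by Proposition \ref{firstunit}. Consequently
\[
u_{A,A'}(t):=c(\hat{D}^\partial_{E,t}+\hat{A}_t)\,c(\hat{D}^\partial_{E,t}+\hat{A}'_t)^{*}
\]
is a compact perturbation of the identity which equals the identity at $t=1$ and tends to the identity as $t\to 0$ by the same asymptotic argument used in Proposition \ref{firstunit}, so
\[
u_{A,A'}\in 1+C_0\bigl((0,1),\mathbbm{K}_{B_2}(L^2(\partial W;S_{\partial W}\otimes E_{B_2}))\bigr)
\]
represents a class $[u_{A,A'}]\in K_1(SB_2)\cong K_0(B_2)$. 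Inspecting the mapping-cone sequence $0\to SB_2\to C_\phi\to B_1\to 0$ and the definition of the Cayley-transform class in $K_1(C_\phi)$ then yields
\[
c(W,\ldots,A)-c(W,\ldots,A')=j_*[u_{A,A'}]\in K_1(C_\phi).
\]

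It therefore suffices to identify $[u_{A,A'}]\in K_0(B_2)$ with $\mathrm{ind}_{APS}(D^W_E,A')-\mathrm{ind}_{APS}(D^W_E,A)$. This is a higher relative APS-index identity, which I would deduce from the change-of-trivialisation formula of Leichtnam--Piazza \cite{LP,LPGAFA}. Concretely, connect $A$ to $A'$ by the affine path $A_s:=(1-s)A+sA'$ and consider the two-parameter family of Cayley transforms $c(\chi(t)D^{\partial W}_E+\tilde{\chi}(t)A_s)$ for $(t,s)\in(0,1]\times[0,1]$. Concatenating its boundary values along the three edges $t=1$, $s=0$ and $s=1$ with the constant identity loop at $t=0$ produces a $K_1(SB_2)$-class which is $[u_{A,A'}]$ on the one hand, and, on the other hand, coincides with the spectral-flow/relative-index class of the family $\{D^{\partial W}_E+A_s\}_{s\in[0,1]}$. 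The latter is identified with $\mathrm{ind}_{APS}(D^W_E,A)-\mathrm{ind}_{APS}(D^W_E,A')$ by a higher APS-gluing argument: attach $(W,D^W_E,A)$ to $(-W,D^W_E,A')$ along a collar on which the trivializing operator interpolates through $A_s$, and compute the index of the resulting closed-manifold Dirac-type operator in two ways.

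Independence of $\Phi_{\rm cone}$ from $A$ is then immediate because adding $j_*\mathrm{ind}_{APS}(D^W_E,A)$ to $c(W,\ldots,A)$ absorbs precisely the defect on the right hand side. Independence from the choice of Dirac operators satisfying Definition \ref{diracsandtrivi} follows because the space of compatible pairs is affine, hence path-connected; along such a path one transports a single trivializing operator using \cite[Theorem 3]{LP}, and both the Cayley-transform class and the higher APS-index depend continuously on the data, hence are constant. The main obstacle in this programme is the spectral-flow identification in the second paragraph: spectral projections of self-adjoint operators in $\Psi^{*}_{B_2}(\partial W;\ldots)$ need not belong to the calculus, so one must carry out the two-parameter family argument entirely through Cayley transforms or resolvents in $\mathbbm{K}_{B_2}$, with careful bookkeeping of the rescalings $\chi,\tilde{\chi}$ against the interpolation parameter $s$ in order to see the explicit representative $u_{A,A'}$ appear.
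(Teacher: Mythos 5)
Your first paragraph reproduces, in multiplicative rather than concatenated form, exactly the reduction the paper makes: since the two Cayley paths agree at $t=1$ with $\tilde{\alpha}(c(D_F)\otimes_\phi 1_{B_2})$ and both tend to $1$ as $t\to 0$, the difference of the two classes in $K_1(C_\phi)$ is $j_*$ of a loop class in $K_1(SB_2)\cong K_0(B_2)$. That part is fine. The genuine gap is the second paragraph: the identification of that loop class with $\mathrm{ind}_{APS}(D^W_E,A')-\mathrm{ind}_{APS}(D^W_E,A)$ is the entire content of the lemma, and you leave it as a programme whose "main obstacle" you name but do not overcome. Your proposed route — the affine path $A_s=(1-s)A+sA'$ and a spectral-flow reading of the two-parameter family — runs into precisely the problem you flag: for intermediate $s$ the operator $D^{\partial W}_E+A_s$ need not be invertible, so there is no well-defined family of spectral projections in the Mishchenko--Fomenko setting and no off-the-shelf notion of higher spectral flow to invoke; the proposed gluing of $(W,A)$ to $(-W,A')$ "along a collar interpolating through $A_s$" is likewise not a defined boundary-value problem at intermediate $s$. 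So the step that converts the Cayley loop into an index difference is asserted, not proved.

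The paper closes this gap without ever deforming $A$ into $A'$. The projections $P_A=\chi_{[0,\infty)}(D^\partial_E+A)$ and $P_{A'}=\chi_{[0,\infty)}(D^\partial_E+A')$ \emph{do} exist (the endpoints are trivializing), and by \cite[Corollary 1, p.~366]{LP} one can choose a single spectral section $Q_0$ for $D^\partial_E$ with $Q_0-P_A$ and $Q_0-P_{A'}$ compact projections; \cite[Theorem 6]{LP} then expresses the APS-index difference as the relative index $[P_{A'}-P_A]=[Q_0-P_A]-[Q_0-P_{A'}]\in K_0(B_2)$. The remaining work is a concrete exponential computation: writing $c(x)=\mathrm{e}^{2\pi i f(x)}$ with $f(x)=\frac{1}{\pi}\tan^{-1}(x)+\frac12$, the concatenated Cayley loop $c(\check{D})$ differs from $\mathrm{e}^{2\pi i p}$ by an element of $C_0(0,1)\otimes\mathbbm{K}_{B_2}$, where $p$ is the piecewise-affine path $P_A\to Q_0\to P_{A'}$, and $\mathrm{e}^{2\pi ip}=\mathrm{e}^{2\pi iq}$ pointwise for the path $q$ whose class is by definition $j_*[P_{A'}-P_A]$ (the difference $p-q$ being a projection commuting with both). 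If you want to complete your argument along your own lines you must either carry out this comparison of exponentials or supply a rigorous higher-spectral-flow theorem for the non-invertible interpolation; as written the proof is incomplete at its central step.
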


\begin{proof}
The second statement of the lemma follows from the first: the choice of Dirac operators is from a path-connected space and by making a choice of a continuous path of trivializing operators, the class $\Phi_{{\rm cone}}(W,(E_{B_2},F_{B_1},\alpha))$ is well defined due to homotopy invariance of $K$-theory. Such an argument is standard, see for instance \cite[Lemma 3.4]{DGIII} and \cite[Section 2.5]{LP}.

To prove the first statement, we use the following elementary fact in $K$-theory: suppose that $u$ and $\tilde{u}$ are unitaries in the unitalization of $C_0((0,1], B_2\otimes \mathbbm{K})$ such that $u(1)=\tilde{u}(1)$. Then, for any unitary $U$ in the unitalization of $B_1\otimes \mathbbm{K}$ with $u(1)=\phi(U)$,
$$[u\oplus U]-[\tilde{u}\oplus U]=[u\#\tilde{u}_{op}] \in \mathrm{im}(K_1(C_0(0,1)\otimes B_2)\to K_1(C_\phi)),$$
where $\tilde{u}_{op}(t)=\tilde{u}(1-t)$ and $\#$ denotes concatenation of paths defined by 
$$u\#\tilde{u}_{op}(t)=\begin{cases}
u(2t), \;& t\in [0,\frac{1}{2}),\\
\tilde{u}_{op}(2t-1), \;& t\in [\frac{1}{2},1].
\end{cases}$$
In our case, this fact implies that
\begin{align*}
c(W,(E_{B_2},F_{B_1},\alpha),&(D_E^W,D_F^{\partial W}),A)-c(W,(E_{B_2},F_{B_1},\alpha),(D_E^W,D_F^{\partial W}),A')\\
&=i_*[c(\hat{D}^\partial_E\#\hat{D}^\partial_{E,op}+\hat{A}\#\hat{A}'_{op})],
\end{align*}
where $\hat{D}^\partial_E\#\hat{D}^\partial_{E,op}$ is of the form $g(t)D^{\partial}_E$, for a function $g$ behaving like $t^{-1}$ near $0$ and $(1-t)^{-1}$ near $1$, and $\hat{A}\#\hat{A}'_{op}$ is the concatenation of the path $\hat{A}$ with the path $\hat{A}'_{op}(t)=\hat{A}'(1-t)$. We write $\check{D}$ for the path $\hat{D}^\partial_E\#\hat{D}^\partial_{E,op}+\hat{A}\#\hat{A}'_{op}$, this path coincides with $t^{-1}(D^\partial_E+A)$ near $t=0$ and with $(1-t)^{-1}(D^\partial_E+A')$ near $t=1$.

Consider the choice function $f(x):=\frac{1}{\pi}\tan^{-1}(x)+\frac{1}{2}$. The function $f\in C^\infty(\field{R})$ satisfies $f(+\infty)=1$, $f(-\infty)=0$ and $c(x)=\mathrm{e}^{2\pi if(x)}$. We define 
$$P_A:=\chi_{[0,\infty)}(D^\partial_E+A)\equiv f(\check{D})(0)\quad\mbox{and}\quad P_{A'}:=\chi_{[0,\infty)}(D^\partial_E+A')\equiv f(\check{D})(1).$$ 
We also pick a spectral section $Q_0$ for $D_E^\partial$ such that $Q_0-P_A$ and $Q_0-P_{A'}$ are projections in $\mathbbm{K}_{B_2}(L^2(\partial W;S_{\partial W}\otimes E_{B_2}))$ (see \cite[Corollary 1, p. 366]{LP}). Consider the continuous path
$$p(t)=\begin{cases}
(1-2t)P_A+2tQ_0, \;& t\in [0,\frac{1}{2}),\\
(2t-1)P_{A'}+2(1-t)Q_0, \;& t\in [\frac{1}{2},1].
\end{cases}$$
In fact, $p$ is a constant projection modulo $C[0,1]\otimes \mathbbm{K}_{B_2}(L^2(\partial W;S_{\partial W}\otimes E_{B_2}))$ satisfying $p(0)=P_A$ and $p(1)=P_{A'}$ in the end-points. Since $f(\check{D})-p\in C_0(0,1)\otimes \mathbbm{K}_{B_2}(L^2(\partial W;S_{\partial W}\otimes E_{B_2}))$ we conclude the identity
$$[c(\check{D})]=[\mathrm{e}^{2\pi if(\check{D})}]=[\mathrm{e}^{2\pi ip}],$$
as classes in $K_1(C_0(0,1)\otimes B_2)$.

On the other hand, \cite[Theorem 6]{LP}, implies that 
$$\mathrm{ind}_{APS}(D_E^W,A)-\mathrm{ind}_{APS}(D^W_E,A')=[P_{A'}-P_A]\equiv [Q_0-P_A]-[Q_0-P_{A'}].$$
Recall that $Q_0$ is choosen such that $Q_0-P_A$ and $Q_0-P_{A'}$ are projections in $\mathbbm{K}_{B_2}(L^2(\partial W;S_{\partial W}\otimes E_{B_2}))$. By definition, we have the identity 
$$j_*[P_{A'}-P_A]=[\mathrm{e}^{2\pi i t(Q_0-P_A)}]-[\mathrm{e}^{2\pi i t(Q_0-P_{A'})}]=[\mathrm{e}^{2\pi i t(Q_0-P_A)}\#\mathrm{e}^{2\pi i t(P_{A'}-Q_0)}].$$
We write $2\pi i q$ for the path appearing in the exponent on the right hand side, i.e.
$$q(t)=\begin{cases}
2t(Q_0-P_A), \;& t\in [0,\frac{1}{2}),\\
(2t-1)(P_{A'}-Q_0), \;& t\in [\frac{1}{2},1].
\end{cases}$$
The path $q$ is not continuous, but $\mathrm{e}^{2\pi i q}$ is. Since 
$$p(t)-q(t)=\begin{cases}
P_A, \;& t\in [0,\frac{1}{2}),\\
Q_0, \;& t\in [\frac{1}{2},1].
\end{cases}$$ 
is a projection that commutes with $p(t)$ and $q(t)$ for each $t$, we have the identity $\mathrm{e}^{2\pi i q(t)}=\mathrm{e}^{2\pi i p(t)}$ for each $t$. In summary, $[c(\check{D})]=[\mathrm{e}^{2\pi ip}]=[\mathrm{e}^{2\pi iq}]=j_*[P_{A'}-P_A]$ proving the lemma. 

\end{proof}

\begin{theorem} 
\label{defIndMap}
The construction of $\Phi_{{\rm cone}}$ in Equation \eqref{definofphicone} produces a well defined isomorphism 
$$\Phi_{{\rm cone}}:K_*^{\textnormal{geo}}(pt;\phi)\to K_{*+1}(C_\phi),$$
fitting into a commutative diagram with exact rows and horizontal mappings being isomorphisms:
\begin{equation}
\label{phiconemap}
 \minCDarrowwidth15pt
\begin{CD}
\cdots@>\phi_*>> K_*^{\textnormal{geo}}(pt;B_2)) @>r>> K_*^{\textnormal{geo}}(pt;\phi) @>\delta >> K_{*+1}^{\textnormal{geo}}(pt;B_1) @>\phi_*>>\cdots \\
@.  @VVV  @VV\Phi_{\textnormal{cone}} V  @VV V \\
\cdots@>\phi_*>> K_*(B_2)) @>r>> K_{*+1}(C_\phi) @>\delta >> K_{*+1}(B_1) @>\phi_*>> \cdots\\
\end{CD},
\end{equation}
\end{theorem}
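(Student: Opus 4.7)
My plan is to verify well-definedness on cycles, check commutativity of the diagram, and then conclude by the five lemma. Lemma \ref{cyctoclasses} already handles independence of $\Phi_{\rm cone}$ on the choice of trivializing operator $A$, and a standard path-connectedness argument (combined with homotopy invariance of $K$-theory, as in \cite[Lemma 3.4]{DGIII}) upgrades this to independence of the choice of Dirac operators. The disjoint union/direct sum relation is immediate because both summands of \eqref{definofphicone} — the Cayley transform class $c(\hat{D}^\partial_E+\hat{A})\oplus c(D_F)$ and $j_*\mathrm{ind}_{APS}(D_E^W,A)$ — are additive under orthogonal direct sum of the underlying data.

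The substantial well-definedness step is bordism invariance, which I expect to be the main obstacle. Given a cycle with boundary $((Z,\hat{W}),(\hat{E}_{B_2},\hat{F}_{B_1},\hat{\alpha}))$ with boundary $(W,(E_{B_2},F_{B_1},\alpha))$ in the sense of \cite[Definition 4.1]{DeeRZ}, I will pick compatible Dirac operators on $Z$ and on the part of $\partial Z$ not contained in $\mathrm{int}(W)$, together with trivializing operators on the codimension-two corner where these boundaries meet. The higher APS index $\mathrm{ind}_{APS}(D_{\hat{E}}^Z, \tilde{A})$ (where $\tilde{A}$ trivializes the free part of the boundary) is zero by the usual bordism invariance for boundary value problems \cite[Section 3]{LP}, and a gluing argument along the corner expresses $\mathrm{ind}_{APS}(D_E^W,A)$ as a difference of boundary Cayley transform classes up to the image of $j_*$. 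Combined with Lemma \ref{cyctoclasses} controlling the trivializing operator choices, this implies $\Phi_{\rm cone}$ evaluates to zero on boundaries. For vector bundle modification by a spin$^c$ bundle $V \to W$ of even rank, the claim reduces to the fact that twisting a Dirac operator by the Bott bundle on $S(V\oplus \mathbf{1}_\mathbb{R})$ preserves both the APS index (via the family version of the Bott argument as in \cite[Section 4]{DeeMappingCone}) and, simultaneously, the boundary Cayley transform class, since vector bundle modification acts compatibly on $\partial W$.

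For commutativity of the diagram, the left square is straightforward: a cycle $r(M,E_{B_2}) = (M,(E_{B_2},\emptyset,\emptyset))$ has empty boundary, so the Cayley transform contribution in \eqref{definofphicone} is trivial, and $\Phi_{\rm cone}$ reduces to $j_*\mathrm{ind}(D_E^M)$, which is precisely the composition of the standard index isomorphism $K_*^{\rm geo}(pt;B_2)\to K_*(B_2)$ with the inclusion-induced $j_*:K_*(B_2)\to K_{*+1}(C_\phi)$. For the right square, $\delta$ on geometric cycles sends $(W,(E_{B_2},F_{B_1},\alpha))\mapsto (\partial W, F_{B_1})$, while the analytic boundary $\delta: K_{*+1}(C_\phi)\to K_{*+1}(B_1)$ applied to $\Phi_{\rm cone}$ picks off the $B_1$-component at $t=1$, which is $c(D_F^{\partial W})$; this represents precisely the index class of $D_F^{\partial W}$ under the Cayley transform identification of $K_{*+1}(B_1)$.

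Finally, with both squares commuting, the rows exact (the top by Theorem \ref{exactonk}, the bottom by the mapping cone six-term sequence), and the outer vertical arrows being the classical index isomorphisms $K_*^{\rm geo}(pt;B_i)\xrightarrow{\sim} K_*(B_i)$, the five lemma shows $\Phi_{\rm cone}$ is an isomorphism. The key conceptual point that makes this work without any injectivity hypothesis on $\phi_*$ (unlike \cite{DeeMappingCone}) is that the corrective $j_*$-term in \eqref{definofphicone} absorbs exactly the indeterminacy that obstructed the naive APS-index map before.
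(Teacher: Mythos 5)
Your overall architecture (well-definedness on cycles, commutativity of the two squares, five lemma) matches the paper's, and your treatment of the choices of Dirac/trivializing operators, of vector bundle modification, and of the two squares is essentially what the paper does. The gap is in bordism invariance, which is indeed the substantial step. First, the assertion that $\mathrm{ind}_{APS}(D^Z_{\hat E},\tilde A)$ ``is zero by the usual bordism invariance for boundary value problems'' is not a correct statement: APS indices on manifolds with boundary do not vanish in general (that is the whole content of APS theory). What is true, and what the paper uses, is that the \emph{closed} manifold $\partial Z = W\cup_{\partial W}W_0$ (with $W_0:=\partial Z\setminus W^{\circ}$) bounds $Z$ and carries the extended bundle $\tilde E_{B_2}$, so $\mathrm{ind}_{AS}(D^{\partial Z}_{\tilde E})=0$; one then needs an additivity/gluing formula $\Phi_{\rm cone}(W,\dots)+\Phi_{\rm cone}(W_0,\dots)=\Phi_{\rm cone}(\partial Z,(\tilde E_{B_2},\emptyset,\emptyset))$.

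Second, and more importantly, your sketch never uses the decisive piece of data in the nullbordism: over $W_0$ the bundle $\tilde E_{B_2}|_{W_0}$ is identified by the extended isomorphism $u$ with $\tilde F_{B_1}\otimes_\phi B_2$ for a $B_1$-bundle $\tilde F_{B_1}\to W_0$. The paper's key lemma is that $\Phi_{\rm cone}$ vanishes on any such ``$\phi$-induced'' cycle $(W_0,(\tilde F_{B_1}\otimes_\phi B_2,\tilde F_{B_1}|_{\partial W_0},u))$: the APS term lies in $j_*\phi_*K_*(B_1)=0$ because $j_*\circ\phi_*=0$ in the mapping cone sequence, and the Cayley term $c(\hat D^\partial_{\tilde F}\otimes_\phi 1+\hat A_1\otimes_\phi 1)\oplus c(D^\partial_{\tilde F})$ is explicitly null-homotopic in the unitalized cone (Equation \eqref{somehom}). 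This vanishing on $\phi$-induced cycles is precisely what makes the contribution of $W_0$ disappear and is the mechanism replacing the injectivity hypothesis on $\phi_*$ from the earlier work; your closing remark attributes that role to Lemma \ref{cyctoclasses}, but that lemma only controls the choice of trivializing operator on a fixed cycle. A ``gluing argument along the corner'' that does not exploit the $\phi$-induced structure of $\tilde E_{B_2}|_{W_0}$ cannot close the argument, because without it there is no reason for the $W_0$-contribution to the class in $K_{*+1}(C_\phi)$ to vanish.
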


\begin{proof}
Assuming that $\Phi_{\textnormal{cone}}$ is well defined, it follows from its construction that the diagram \eqref{phiconemap} commutes and that it is an isomorphism using the five lemma. The proof that $\Phi_{\textnormal{cone}}$ is well defined is divided into proving that the map respects the bordism and vector bundle modification relations; that it respect the disjoint union/direct sum relation follows from basic facts in higher APS-index theory. 

Consider a cycle $(W_0,(\tilde{F}_{B_1}\otimes _\phi B_2, \tilde{F}_{B_1}|_{\partial W_0},u|_{\partial W_0}))$ defined from a $B_1$-bundle $\tilde{F}_{B_1}\to W_0$ equipped with an automorphism $u:\tilde{F}\otimes_\phi B_2\to \tilde{F}\otimes_\phi B_2$. We can choose a Dirac operator $D_{\tilde{F}}^{W_0}$ on $\tilde{F}_{B_1}\to W_0$ and a trivializing operator $A_1\in  \Psi^{-\infty}_{B_1}(\partial W_0; S_{\partial W_0}\otimes \tilde{F}_{B_1}|_{\partial W_0})$ for $D_{\tilde{F}}^{\partial}$. By construction, $(D_{\tilde{F}}^{W_0}\otimes _\phi 1_{B_2}, D_{\tilde{F}}^\partial)$ is a Dirac operator and $A_1\otimes_\phi 1_{B_2}$ a trivializing operator for the cycle with Dirac operator $(W_0,(\tilde{F}_{B_1}\otimes _\phi B_2, \tilde{F}_{B_1}|_{\partial W_0},\mathrm{id}_{\tilde{F}_{B_1}|_{\partial W_0}\otimes _\phi B_2}),(D_{\tilde{F}}^{W_0}\otimes _\phi 1_{B_2}, D_{\tilde{F}}^\partial))$. It holds that 
\begin{align}
\nonumber
j_*\mathrm{ind}_{APS}(D_{\tilde{F}}^{W_0}\otimes _\phi 1&_{B_2},A_1\otimes _\phi 1_{B_2})=j_*\circ \phi_*\mathrm{ind}_{APS}(D_{\tilde{F}}^{W_0},A_1)=0,\quad\mbox{and}\\
\nonumber
c(\hat{D}^\partial_{\tilde{F}}\otimes_\phi 1_{B_2}+\hat{A}_1&\otimes_\phi 1_{B_2})\oplus c(D_{\tilde{F}}^\partial)\\
\label{somehom}
&\sim_h c(\chi\!\cdot\!(D^\partial_{\tilde{F}}\otimes_\phi 1_{B_2}+A_1\otimes_\phi 1_{B_2}))\oplus c(D_{\tilde{F}}^\partial+A_1)\sim_h 1\oplus 1.
\end{align}
The homotopies in Equation \eqref{somehom} are inside the group of unitaries in the unitalization of the mapping cone of the mapping 
$$\mathbbm{K}_{B_1}(L^2(\partial W;S_{\partial W}\otimes F_{B_1}))\to \mathbbm{K}_{B_2}(L^2(\partial W;S_{\partial W}\otimes E_{B_2}))$$
constructed from $\alpha$. We remark that an immediate consequence of the first homotopy in Equation \eqref{somehom} is that the Cayley transform term is in the image of $K_1(C_0(0,1]\otimes C^*(\Gamma_1))\to K_1(C_\phi)$. In conclusion, 
$$\Phi_{\rm cone}(W_0,(\tilde{F}_{B_1}\otimes _\phi B_2, \tilde{F}_{B_1}|_{\partial W_0},u|_{\partial W_0}))=0.$$

We can now prove bordism invariance. Let $(W,(E_{B_2},F_{B_1},\alpha))$ be a nullbordant cycle. By assumption, there is a spin$^c$-manifold with boundary $Z$ containing $W$ as a regular domain, with $E_{B_2}$ extending to a bundle $\tilde{E}_{B_2}\to Z$, $F_{B_1}$ extending to a bundle $\tilde{F}_{B_1}\to W_0:=\partial Z\setminus W^\circ$ and $\alpha$ to an isomorphism $u:\tilde{E}_{B_2}|_{W_0}\to \tilde{F}_{B_1}\otimes_\phi B_2$. By the argument above, $\Phi_{\rm cone}(W_0,(\tilde{E}_{B_2}|_{W_0},\tilde{F}_{B_1},u))=0$. Additivity of the index and of the Cayley transform shows that 
\begin{align*}
\Phi_{\rm cone}&(W,(E_{B_2},F_{B_1},\alpha))\\
&=\Phi_{\rm cone}(W,(E_{B_2},F_{B_1},\alpha))+\Phi_{\rm cone}(W_0,(\tilde{E}_{B_2}|_{W_0},\tilde{F}_{B_1},u))\\
&=\Phi_{\rm cone}(\partial Z,(\tilde{E}_{B_2},\emptyset,\emptyset))=0,
\end{align*}
where the last identity follows from the bordism invariance of the index.

To show that $\Phi_{\rm cone}$ respects vector bundle modification, we use of a trick from \cite{DGIII} (also see \cite{DeeMappingCone,DGMbor}). Let $(W,(E_{B_2},F_{B_1},\alpha), (D_E,D_F),A)$ be a cycle with Dirac operators and trivializing operator and $V\to W$ a spin$^c$-vector bundle of even rank. The vector bundle modification of $(W,(E_{B_2},F_{B_1},\alpha))$ is given by $(W^V,(E_{B_2}^V,F_{B_1}^V,\alpha^V))$ where $W^V:=S(V\oplus 1_\field{R})$ (as in Definition \ref{vectormod}) and 
$$(E_{B_2}^V,F_{B_1}^V,\alpha^V):=(p^*E_{B_2}\otimes B,p^*F_{B_1}\otimes B,p^*\alpha\otimes \mathrm{id}_B),$$ 
where $p:W^V\to W$ is the projection and $B\to W^V$ denotes the Bott bundle. Following \cite[Section 2.3]{DGIII}, we can vector bundle modify Dirac operators and trivializing operators. We let $(D_E^V,D_F^V,A^V)$ denote the vector bundle modification of $(D_E^W,D_F^{\partial W},A)$ (see \cite[Definition 2.11 and 2.18]{DGIII}). The vector bundle modification of a cycle with Dirac operators and trivializing operators is then given by
$$(W,(E_{B_2},F_{B_1},\alpha), (D_E^W,D_F^{\partial W}),A)^V:=(W^V,(E_{B_2}^V,F_{B_1}^V,\alpha^V), (D_E^V,D_F^V),A^V).$$
By \cite[Section 2.3]{DGIII}, $\mathrm{ind}_{APS}(D_E,A)=\mathrm{ind}_{APS}(D_E^V,A^V)$. Furthermore, again using \cite[Section 2.3]{DGIII}, we can decompose the path of operators constructed in \eqref{pathofops} as
$$\hat{D}_E^V+\hat{A}^V=(\hat{D}_E+\hat{A})\oplus \hat{D}_E^\perp\quad \mbox{and}\quad D_F^V=D_F\oplus D_F^\perp,$$
where $D_F^\perp$ and $\hat{D}_E^\perp$ are invertible self-adjoint regular operators, $\hat{D}_E^\perp$ being a continuous family over $(0,1]$ coinciding with $\alpha^*(D_F^\perp\otimes_\phi 1_{B_2})$ at $t=1$. Therefore, $c(\hat{D}_E^\perp)\oplus c(D_F^\perp)$ defines a trivial class in $K$-theory whose vanishing is implemented by the homotopy $(c(s^{-1}\hat{D}_E^\perp)\oplus c(s^{-1}D_F^\perp))_{s\in [0,1]}$. It follows that 
$$c(W,(E_{B_2},F_{B_1},\alpha), (D^W_E,D^{\partial W}_F),A)=c(W^V,(E_{B_2}^V,F_{B_1}^V,\alpha^V),(D^V_E,D^{V}_F),A^V).$$
\end{proof}

We introduce some further notation. Let $B$ denote a $C^*$-algebra. For a Dirac operator $D^M_{E_B}$ twisted by a $B$-bundle $E_B\to M$ on the closed manifold $M$ we write ${\rm ind}_{AS}(D^M_{E_B})\in K_{*}(B)$ for its index. The index is well defined because $D^M_{E_B}$ is elliptic in the Mischenko-Fomenko calculus. 

We return to the special case where $B_1=C^*(\Gamma_1)$, $B_2=C^*(\Gamma_2)$, and $\phi$ is induced from a group homomorphism.

\begin{lemma}
\label{thelemmaformerlyknownasproposition3.12}
Let $(W,E_{\field{C}},(f,g))$ be a cycle in $K_*^{\textnormal{geo}}(B\phi)$ satisfying 
$${\rm ind}_{AS}(D^{\partial W}_{E_{\field{C}}\otimes g^*(\mathcal{L}_{B\Gamma_1})})=0\in K_{*-1}(C^*(\Gamma_1)),$$ 
for a Dirac operator $D^{\partial W}_{E_{\field{C}}|_{\partial W}}$ twisted by the flat connection on $g^*(\mathcal{L}_{B\Gamma_1})$. Then, 
\begin{equation}
\label{muequalr}
\mu(W,E,(f,g))=\Phi_{\rm cone}^{-1}(j_*({\rm ind}_{APS}(D^{ W}_{E_{\field{C}} \otimes f^*(\mathcal{L}_{B\Gamma_2})},\alpha_*\phi_*A')))
\end{equation}
where
\begin{enumerate}
\item $D^{ W}_{E_{\field{C}|_{\partial W}} \otimes f^*(\mathcal{L}_{B\Gamma_2})}$ is constructed from any Dirac operator $D^{W}_{E_{\field{C}}}$ on $W$, being of product type near $\partial W$ with boundary operator $D^{\partial W}_{E_{\field{C}}}$, by twisting it with the flat connection on $f^*(\mathcal{L}_{B\Gamma_2})$.
\item ${\rm ind}_{APS}( \: \cdot \:)$ denotes the higher Atiyah-Patodi-Singer index from Definition \ref{notationaps} , again see \cite{LP} and references therein; it is an element in $K_*(C^*(\Gamma_2))$;
\item $A'\in \Psi^{-\infty}_{C^*(\Gamma_1)}(\partial W; S_{\partial W}\otimes E_{\field{C}}|_{\partial W} \otimes g^*(\mathcal{L}_{B\Gamma_1}))$ is a trivializing operator (see \cite[Section 2]{LP}) for $D_{\partial W, E_{\field{C}}\otimes g^*(\mathcal{L}_{B\Gamma_1})}$;
\item The isomorphism $\alpha$ is constructed as in Definition \ref{defofassembly} (on page \pageref{defofassembly}).
\end{enumerate}
\end{lemma}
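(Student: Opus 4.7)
The plan is to compute $\Phi_{\rm cone}(\mu(W,E,(f,g)))$ directly via its definition in \eqref{definofphicone}, and then show that the Cayley transform contribution vanishes for a suitable choice of trivializing operator. Write $B_i := C^*(\Gamma_i)$. By Definition \ref{defofassembly}, $\mu(W,E,(f,g))=(W,(E_{B_2},F_{B_1},\alpha))$ with $E_{B_2}=E\otimes f^*\mathcal{L}_{B\Gamma_2}$ and $F_{B_1}=E|_{\partial W}\otimes g^*\mathcal{L}_{B\Gamma_1}$. The hypothesis $\mathrm{ind}_{AS}(D_F)=0$, where $D_F:=D^{\partial W}_{E\otimes g^*\mathcal{L}_{B\Gamma_1}}$, guarantees the existence of a trivializing operator $A'$ for $D_F$. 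Setting $A:=\alpha_*\phi_* A'$ gives $D_E^{\partial W}+A=\alpha^*((D_F+A')\otimes_\phi B_2)$, so $A$ is a valid trivializing operator for $D_E^{\partial W}$. Substituting into \eqref{definofphicone} yields
\begin{equation*}
\Phi_{\rm cone}(\mu(W,E,(f,g)))=c(W,\ldots,A)+j_*\mathrm{ind}_{APS}(D_E^W,\alpha_*\phi_* A'),
\end{equation*}
so it suffices to prove $c(W,\ldots,A)=0$ in $K_1(C_\phi)$. I focus on the even-dimensional case; the odd-dimensional case follows by the formal suspension argument mentioned prior to Proposition \ref{firstunit}.

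To prove the vanishing of $c(W,\ldots,A)$ inside $K_1(C_{\tilde\alpha})\cong K_1(C_\phi)$, the plan is to construct an explicit nullhomotopy. For $s\in[0,1]$, set
\begin{equation*}
u_s(t):=c\bigl(\chi(t)D_E^{\partial W}+(\tilde\chi(t)+s(\chi(t)-\tilde\chi(t)))A\bigr),\qquad v_s:=c(D_F+sA').
\end{equation*}
At $t=1$ one has $\chi(1)=1$ and $\tilde\chi(1)=0$, so $u_s(1)=c(D_E^{\partial W}+sA)=\tilde\alpha(c(D_F+sA')\otimes_\phi 1_{B_2})=\tilde\alpha(v_s\otimes_\phi 1_{B_2})$, using the intertwining $D_E^{\partial W}+sA=\alpha^*((D_F+sA')\otimes_\phi B_2)$. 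Near $t=0$ one has $\chi(t)=\tilde\chi(t)=t^{-1}$, so the coefficient $\chi(t)-\tilde\chi(t)$ vanishes and $u_s(t)=c(t^{-1}(D_E^{\partial W}+A))\to 1$ by invertibility of $D_E^{\partial W}+A$ and the asymptotics $c(\pm\infty)=1$. Hence each $(u_s,v_s)$ is a unitary in the unitalization of $C_{\tilde\alpha}$, and the family is continuous in $s$. At $s=0$ we recover the representing pair $(c(\hat D_E^\partial+\hat A),c(D_F))$ of the class $c(W,\ldots,A)$.

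At $s=1$, $u_1(t)=c(\chi(t)(D_E^{\partial W}+A))$ and $v_1=c(D_F+A')$, both built from invertible operators. Using the same intertwining, $u_1(t)=\tilde\alpha(c(\chi(t)(D_F+A'))\otimes_\phi 1_{B_2})$. Therefore $(u_1,v_1)$ is the image of the pair $(c(\chi(t)(D_F+A')),c(D_F+A'))$ under the functorial $*$-homomorphism $C_{\mathrm{id}_{\mathbbm{K}_{B_1}}}\to C_{\tilde\alpha}$ induced by $\tilde\alpha$ on the $B_2$-component. Since $C_{\mathrm{id}_{\mathbbm{K}_{B_1}}}\cong \mathbbm{K}_{B_1}\otimes C_0((0,1])$ is the cone algebra and therefore contractible, its $K_1$-group vanishes, so $[(u_1,v_1)]=0$ in $K_1(C_{\tilde\alpha})$. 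Homotopy invariance of $K$-theory then gives $c(W,\ldots,A)=[(u_0,v_0)]=[(u_1,v_1)]=0$, completing the proof. The main obstacle is the design of the interpolation $\tilde\chi(t)+s(\chi(t)-\tilde\chi(t))$: it is engineered so that the term $\chi(t)-\tilde\chi(t)$ vanishes near $t=0$ (preserving the limit-to-identity condition via invertibility of $D_E^{\partial W}+A$) but contributes the full $A$ at $t=1$ (so that the boundary matching condition can be maintained after simultaneously deforming $D_F$ to $D_F+sA'$ on the $B_1$-side).
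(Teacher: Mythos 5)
Your proposal is correct and follows essentially the same route as the paper: reduce to the vanishing of the Cayley-transform term for the choice $A=\alpha_*\phi_*A'$ (which exists because ${\rm ind}_{AS}(D_F)=0$), and then kill that term by the homotopy that simultaneously turns on the full $\chi(t)A$ on the $B_2$-side and deforms $D_F$ to $D_F+A'$ on the $B_1$-side — this is exactly the homotopy the paper invokes via Equation \eqref{somehom}. Your explicit interpolation $\tilde\chi+s(\chi-\tilde\chi)$ and the factorization of the endpoint through the contractible cone $C_{\mathrm{id}_{\mathbbm{K}_{B_1}}}$ are just a cleanly written-out version of the two homotopies the paper leaves implicit.
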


\begin{proof}
By definition, $\Phi_{\textnormal{cone}}( \mu((W,E,(f,g))))$ is equal to
\begin{align}
\nonumber
c(W, \left( E_{\field{C}}\otimes f^*(\mathcal{L}_{B \Gamma_2}), E_{\field{C}}|_{\partial W} \otimes g^*(\mathcal{L}_{B \Gamma_1}), \alpha \right), & (D_{E_{\field{C}}\otimes f^*(\mathcal{L}_{B \Gamma_2})}^W, D_{E_{\field{C}}|_{\partial W} \otimes g^*(\mathcal{L}_{B \Gamma_1})}^{\partial W}), A ) \\
\label{phiconeandindaps}
& +j_*({\rm ind_{APS}}(D_{E_{\field{C}}\otimes f^*(\mathcal{L}_{B\Gamma_2}) }^W, A)) 
\end{align}
where, by Proposition \ref{cyctoclasses}, the resulting class is independent of the choice of trivializing operator $A$. 

The assumption that ${\rm ind}_{AS}(D^{\partial W}_{E_{\field{C}}|_{\partial W}\otimes g^*(\mathcal{L}_{B\Gamma_1})})=0$ implies that we can take $A$ of the form $\alpha_*\phi_*(A')$ where $A'$ is a trivializing operator for the operator $D^{\partial W}_{E_{\field{C}}|_{\partial W}\otimes g^*(\mathcal{L}_{B\Gamma_1})}$. We note that $A'$ is an element of $\Psi^{-\infty}_{C^*(\Gamma_1)}(\partial W; S_{\partial W}\otimes E_{\field{C}}|_{\partial W} \otimes g^*(\mathcal{L}_{B\Gamma_1}))$ and results in \cite[Appendix C]{PS} imply that $\alpha_*\phi_*(A')$ is a trivializing operator for $D_{E_{\field{C}}\otimes f^*(\mathcal{L}_{B \Gamma_2})}^{ \partial W}$. Using similar homotopies to those in Equation \eqref{somehom}, it follows that 
\small
$$c(W, \left( E_{\field{C}}\otimes f^*(\mathcal{L}_{B \Gamma_2}), E_{\field{C}}|_{\partial W} \otimes g^*(\mathcal{L}_{B \Gamma_1}), \alpha \right),  (D_{E_{\field{C}}\otimes f^*(\mathcal{L}_{B \Gamma_2})}, D_{E_{\field{C}}|_{\partial W} \otimes g^*(\mathcal{L}_{B \Gamma_1})}), A )=0.$$
\normalsize
The result now follows from Equation \eqref{phiconeandindaps}.
\end{proof}

\begin{remark}
Lemma \ref{thelemmaformerlyknownasproposition3.12} should be compared with the results in \cite[Section 4]{DeeMappingCone}. When $\phi_*: K_*(C^*(\Gamma_1)) \rightarrow K_*(C^*(\Gamma_2))$ is injective, the proof of Lemma \ref{thelemmaformerlyknownasproposition3.12} implies that the isomorphism constructed here (i.e., $\Phi_{\rm cone}$) agrees with the isomorphism defined in \cite[Section 4]{DeeMappingCone}, i.e. the isomorphisms agree when both are defined. This follows from the general fact that $c(W,(E_{B_2},F_{B_1},\alpha),(D_E^W,D_F^{\partial W}),A)=0$ when we can pick the trivializing operator $A$ over $B_1$ (cf. Item (3) in the statement of Lemma \ref{thelemmaformerlyknownasproposition3.12}). We will not need this result in this paper and refrain from giving a detailed proof.
\end{remark}

\subsection{An application to PSC-metrics}
For a spin manifold with boundary that admits a metric of positive scalar curvature, more can be said about its relative assembly. A particularly nice feature of positive scalar curvature is that the Dirac operators appearing in the assembled cycles are invertible so $0$ provides a canonical choice of trivializing operator. In \cite[Theorem 2.18]{CWY} it was proven that its relative assembly by means of localization algebras vanishes. The two results are in several cases equivalent by Theorem \ref{comDiaGeoVsCWY} below. We provide a short proof of this fact in the geometric setting. We remain in $K$-homology although the same proof carries over to $KO$-homology. We now turn to Theorem \ref{thmpscone} in the Introduction.

\begin{theorem}
\label{vanishingpscresult}
Let $W$ be a connected spin-manifold with boundary and let $(W, W\times \field{C}, (\mathrm{id}_W, \mathrm{id}_{\partial W}))$ denote the cycle representing the fundamental class in $K_*^{\rm geo}(W,\partial W)$. If $W$ admits a metric of positive scalar curvature that is collared at the boundary, then 
$$\mu(W, W\times \field{C}, (\mathrm{id}_W, \mathrm{id}_{\partial W}))=0 \in K_{{\rm dim}(W)}(pt;\phi)$$
where $\phi: \pi_1(\partial W) \rightarrow \pi_1(W)$ is the group homomorphism induced from the inclusion $i:\partial W\hookrightarrow W$.
\end{theorem}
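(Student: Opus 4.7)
The plan is to transfer the question to the $K$-theory of the mapping cone via the isomorphism $\Phi_{\mathrm{cone}}$ of Theorem \ref{defIndMap}, reduce to a higher APS index computation via Lemma \ref{thelemmaformerlyknownasproposition3.12}, and then apply the Lichnerowicz formula twice: once on the boundary and once on the bulk. Since $\Phi_{\mathrm{cone}}$ is an isomorphism, it suffices to show that $\Phi_{\mathrm{cone}}\bigl(\mu(W, W\times \field{C}, (\mathrm{id}_W, \mathrm{id}_{\partial W}))\bigr) = 0$ in $K_{\dim W + 1}(C_\phi)$.

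First I would invoke Remark \ref{universalmaps} to identify the cycle $(W, W\times \field{C}, (\mathrm{id}_W, \mathrm{id}_{\partial W}))$, viewed as a cycle for $K_*^{\rm geo}(i : \partial W \to W)$, with its image $(W, W\times \field{C}, (f_W, f_{\partial W}))$ in $K_*^{\rm geo}(B\phi)$ along classifying maps. The assembled cycle is then $(W, (f_W^*\mathcal{L}_{B\Gamma_2}, f_{\partial W}^*\mathcal{L}_{B\Gamma_1}, \alpha))$. Because the Mishchenko bundles are flat $C^*$-algebra bundles, the twisted spin Dirac operators $D^W_{f_W^*\mathcal{L}_{B\Gamma_2}}$ on $W$ and $D^{\partial W}_{f_{\partial W}^*\mathcal{L}_{B\Gamma_1}}$ on $\partial W$ satisfy the Lichnerowicz identity $D^2 = \nabla^*\nabla + \tfrac{s}{4}$ with no additional curvature correction from the coefficient bundle.

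Next, since the PSC metric is collared, the Lichnerowicz formula on $\partial W$ (where $s \geq \epsilon > 0$) shows that $D^{\partial W}_{f_{\partial W}^*\mathcal{L}_{B\Gamma_1}}$ is a self-adjoint regular operator that is invertible on $L^2(\partial W, S_{\partial W}\otimes f_{\partial W}^*\mathcal{L}_{B\Gamma_1})$. In particular, its index in $K_{*-1}(C^*(\Gamma_1))$ vanishes and $A' := 0$ is a valid trivializing operator in the sense of Definition \ref{diracsandtrivi}. Lemma \ref{thelemmaformerlyknownasproposition3.12} therefore applies and gives
\begin{equation*}
\Phi_{\mathrm{cone}}\bigl(\mu(W, W\times \field{C}, (\mathrm{id}_W, \mathrm{id}_{\partial W}))\bigr) = j_*\bigl(\mathrm{ind}_{APS}(D^W_{f_W^*\mathcal{L}_{B\Gamma_2}}, 0)\bigr).
\end{equation*}
Here the trivializing perturbation $\alpha_*\phi_*A' = 0$ corresponds to the standard APS boundary condition associated with the boundary Dirac operator itself.

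Finally, PSC throughout $W$ (which holds by hypothesis, not only near $\partial W$) and the Lichnerowicz identity on the cylinder extension $W_\infty$ give a uniform lower bound $D^2 \geq c > 0$ on compactly supported sections of $S_W\otimes f_W^*\mathcal{L}_{B\Gamma_2}$. This standard positivity argument (as in the usual proof that PSC metrics produce vanishing spin Dirac indices, extended to the Mishchenko--Fomenko setting and APS boundary conditions via \cite{LP}) shows that the APS realization of $D^W_{f_W^*\mathcal{L}_{B\Gamma_2}}$ with trivializing operator $A = 0$ is itself invertible, so $\mathrm{ind}_{APS}(D^W_{f_W^*\mathcal{L}_{B\Gamma_2}}, 0) = 0 \in K_*(C^*(\Gamma_2))$. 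Combined with the previous step this yields $\Phi_{\mathrm{cone}}\bigl(\mu[W]\bigr) = 0$, and the theorem follows from the injectivity of $\Phi_{\mathrm{cone}}$. The main technical point to verify carefully is the invertibility of the APS-realization on the manifold with boundary $W_\infty$ given only PSC on $W$ together with the collared condition; this is the step where one must be confident that the Lichnerowicz positivity passes through the higher APS construction of \cite{LP}.
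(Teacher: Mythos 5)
Your proposal is correct and follows essentially the same route as the paper, which also splits the argument into (i) using PSC on the collared boundary to make $A'=0$ a trivializing operator and invoke Lemma \ref{thelemmaformerlyknownasproposition3.12} to realize $\Phi_{\rm cone}(\mu[W])$ as $j_*(\mathrm{ind}_{APS}(D^W_{f^*\mathcal{L}_{B\Gamma_2}},0))$ (Proposition \ref{apswhenpsconbdry}), and (ii) killing that APS index by PSC on all of $W$ (Lemma \ref{vanishingofaps}). The one technical point you flag at the end is handled in the paper by passing to the invertible operator on the cylindrical extension $W_\infty$ in the $b$-Mishchenko--Fomenko calculus and citing \cite[Proposition 2.4]{PSrhoInd} to identify the $b$-index with $\mathrm{ind}_{APS}(\cdot,0)$.
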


Here we are using the assembly mapping $\mu_{\rm geo}^\phi:K_*^{\rm geo}(W,\partial W)\to K_{*}^{\rm geo}(pt;\phi)$ defined from the composition of the relative assembly mapping with the functorially associated push forward $K_*^{\rm geo}(W,\partial W)\to K_*^{\rm geo}(B\phi)$. Theorem \ref{vanishingpscresult} will follow from the following two results.

\begin{prop}
\label{apswhenpsconbdry}
Let $W$ be a connected spin-manifold with boundary and let $(W, W\times \field{C}, (\mathrm{id}_W, \mathrm{id}_{\partial W}))$ denote the cycle representing the fundamental class in $K_*^{\rm geo}(W,\partial W)$. If $\partial W$ admits a metric $g_{\partial W}$ which has positive scalar curvature, then 
$$\mu(W, W\times \field{C}, (\mathrm{id}_W, \mathrm{id}_{\partial W}))=\Phi_{\rm cone}^{-1}(j_*({\rm ind}_{APS}(D^{ W}_{f^*(\mathcal{L}_{B\Gamma_2})},0))$$
where $D^{ W}_{f^*(\mathcal{L}_{B\Gamma_2})}$ is the twisting by the Mishchenko bundle of a spin-Dirac operator on $W$ constructed from a metric $g$ that takes the form $\mathrm{d} y^2+g_{\partial W}$ near the boundary, here $y$ denotes the normal coordinate.
\end{prop}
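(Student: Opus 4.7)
The plan is to reduce to Lemma \ref{thelemmaformerlyknownasproposition3.12} by using positive scalar curvature to produce a canonical trivializing operator. First, via Remark \ref{generalassembly} and the universal property of classifying spaces, the fundamental cycle $(W,W\times\field{C},(\mathrm{id}_W,\mathrm{id}_{\partial W}))$ maps to the cycle $(W,W\times \field{C},(f_W,f_{\partial W}))\in K_*^{\rm geo}(B\phi)$ under the classifying maps $f_W:W\to B\pi_1(W)$ and $f_{\partial W}:\partial W\to B\pi_1(\partial W)$ (with $f_W\circ i=B\phi\circ f_{\partial W}$). Since the assembly map in Remark \ref{generalassembly} factors through $\mu_\phi$ acting on $(W,W\times\field{C},(f_W,f_{\partial W}))$, it suffices to identify the latter.

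Next, I would invoke the twisted Lichnerowicz formula on the boundary. The Mishchenko bundle $\mathcal{L}_{B\Gamma_1}\to B\Gamma_1$ carries the canonical flat connection induced from the trivial connection on $E\Gamma_1\times C^*(\Gamma_1)$, and this flatness is inherited by $f_{\partial W}^*\mathcal{L}_{B\Gamma_1}$. Hence the twisting curvature term in the Weitzenb\"ock formula vanishes and
$$\bigl(D^{\partial W}_{f_{\partial W}^*\mathcal{L}_{B\Gamma_1}}\bigr)^2=\nabla^*\nabla+\tfrac{1}{4}R_{g_{\partial W}}\cdot\mathrm{id},$$
as regular self-adjoint operators on the $C^*(\Gamma_1)$-Hilbert module of $L^2$-sections. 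The hypothesis that $g$ is collared at the boundary with $R_{g_{\partial W}}>0$ implies that the right-hand side is bounded below by a positive constant times the identity, so $D^{\partial W}_{f_{\partial W}^*\mathcal{L}_{B\Gamma_1}}$ is invertible in the Mishchenko--Fomenko calculus. In particular $A':=0$ is a trivializing operator in the sense of Definition \ref{diracsandtrivi}, and ${\rm ind}_{AS}(D^{\partial W}_{f_{\partial W}^*\mathcal{L}_{B\Gamma_1}})=0\in K_{*-1}(C^*(\Gamma_1))$.

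With these two observations, the hypothesis of Lemma \ref{thelemmaformerlyknownasproposition3.12} is satisfied and the formula there applies with $A'=0$. Since $\alpha_*\phi_*(0)=0$, the lemma yields
$$\mu(W,W\times\field{C},(\mathrm{id}_W,\mathrm{id}_{\partial W}))=\Phi_{\rm cone}^{-1}\!\bigl(j_*({\rm ind}_{APS}(D^W_{f^*\mathcal{L}_{B\Gamma_2}},0))\bigr),$$
which is the desired identity. The only nontrivial point is the $C^*$-linear Lichnerowicz argument; this is standard (cf.\ the PSC discussions in \cite{LP,PS}) because flatness of the Mishchenko bundle removes the twisting curvature, so the usual scalar-curvature spectral gap persists at the level of regular operators on the $C^*(\Gamma_1)$-Hilbert module.
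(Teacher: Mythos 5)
Your proposal is correct and follows essentially the same route as the paper: positive scalar curvature on $\partial W$ gives invertibility of the Mishchenko-twisted boundary Dirac operator via the $C^*$-linear Lichnerowicz formula, hence $A'=0$ is a trivializing operator and ${\rm ind}_{AS}$ of the boundary operator vanishes, so Lemma \ref{thelemmaformerlyknownasproposition3.12} applies with $A'=0$. The paper states the invertibility and vanishing more tersely and phrases the conclusion via the spectral section $\chi_{[0,\infty)}(D^{\partial W}_{\mathcal{L}_{B\pi_1(W)}})$, but the argument is the same.
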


\begin{proof}
The existence of a metric of positive scalar curvature on the boundary implies that ${\rm ind}_{AS}(D^{\partial W}_{\mathcal{L}_{B\pi_1(\partial W)}})=0$. We let $g$ denote the extension of $g_{\partial W}$ to a metric on $W$. When constructed from $g$, $D^{\partial W}_{\mathcal{L}_{B\pi_1(\partial W)}}$ is invertible. By definition, the index of the Dirac operator $D^W_{\mathcal{L}_{B\pi_1(W)}}$  equipped with the Atiyah-Patodi-Singer boundary condition defined from the spectral section 
$$\chi_{[0,\infty)}(D^{\partial W}_{\mathcal{L}_{B\pi_1( W)}})=\alpha^*\left(\chi_{[0,\infty)}(D^{\partial W}_{\mathcal{L}_{B\pi_1(\partial W)}})\otimes_\phi 1_{C^*(\pi_1(W))}\right),$$
coincides with ${\rm ind}_{APS}(D^W_{\mathcal{L}_{B\pi_1(W)}},0)$. Using Lemma \ref{thelemmaformerlyknownasproposition3.12}, the proposition follows. 
\end{proof}

The next lemma is folklore. In lack of a precise reference, we provide a short proof of the result.

\begin{lemma}
\label{vanishingofaps}
If $W$ is a connected spin-manifold with boundary and $g$ is a metric of positive scalar curvature that is collared at the boundary, then 
$${\rm ind}_{APS}(D^W_{\mathcal{L}_{B\pi_1(W)}},0)=0\in K_*(C^*(\pi_1(W))),$$
where $D^W_{\mathcal{L}_{B\pi_1(W)}}$ denotes the spin-Dirac operator constructed from $g$ twisted by the flat connection on the Mishchenko bundle.
\end{lemma}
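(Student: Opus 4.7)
The plan is to reduce the vanishing to the classical Lichnerowicz argument applied on the cylindrical extension $W_\infty$. First I would observe that since $g$ is of product type near $\partial W$, the restriction $g_{\partial W}$ also has positive scalar curvature, and the cylindrical metric $\mathrm{d}y^2+g_{\partial W}$ on $[1,\infty)\times\partial W$ has scalar curvature equal to that of $g_{\partial W}$. Combined with the (strictly positive) minimum of the scalar curvature on the compact piece $\overline{W}$, this gives a uniform lower bound $R_{g}\geq c>0$ on all of $W_\infty$.

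Next I would invoke the Lichnerowicz--Weitzenb\"ock formula for the Mishchenko-twisted Dirac operator $D^{W_\infty}_{\mathcal{L}}$ on $W_\infty$. Because $\mathcal{L}_{B\pi_1(W)}$ carries the flat connection, the curvature term from the twisting vanishes, so
\begin{equation}
(D^{W_\infty}_{\mathcal{L}})^2=\nabla^*\nabla+\tfrac{R_g}{4}\geq \tfrac{c}{4}>0
\end{equation}
as a regular self-adjoint operator on the $C^*(\pi_1(W))$-Hilbert module $L^2(W_\infty,S\otimes \mathcal{L})$. Hence $D^{W_\infty}_{\mathcal{L}}$ is invertible, and its $L^2$-index in $K_*(C^*(\pi_1(W)))$ is zero.

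The third step is to identify $\mathrm{ind}_{APS}(D^W_{\mathcal{L}},0)$ with this vanishing cylindrical index. Since the boundary operator $D^{\partial W}_{\mathcal{L}}$ is itself invertible (again by Lichnerowicz on $\partial W$, using PSC of $g_{\partial W}$ and flatness of the restricted Mishchenko bundle), the choice $A=0$ is a legitimate trivializing operator in the sense of Definition \ref{diracsandtrivi}, and the APS--realization with the spectral projection $\chi_{[0,\infty)}(D^{\partial W}_{\mathcal{L}})$ agrees with the $b$-calculus/$L^2$-index on $W_\infty$; this is the higher analogue of the Atiyah--Patodi--Singer identity and is proved in \cite{LP} (see also the discussion in Section~\ref{relativeindexandmappingsconesubsec} of how cylindrical data recovers APS data). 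Combining these three steps yields $\mathrm{ind}_{APS}(D^W_{\mathcal{L}},0)=0$, which is the claim.

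The main obstacle is purely citational rather than conceptual: one must ensure that the equality ``APS index with trivializing operator $0$ equals the cylindrical $L^2$-index'' is available in the higher (Mishchenko-twisted) setting with $C^*$-algebra coefficients, since for $K$-theory classes in $K_*(C^*(\pi_1(W)))$ the spectral theory is more subtle than in the scalar case; fortunately, when the boundary operator is invertible this is exactly the setup of \cite{LP} and no spectral section is needed, so the identification is direct.
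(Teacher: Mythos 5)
Your proposal is correct and follows essentially the same route as the paper's proof: extend to the cylindrical manifold $W_\infty$, use the uniform positive lower bound on scalar curvature together with the Lichnerowicz formula (with no curvature contribution from the flat Mishchenko bundle) to get invertibility and hence vanishing of the cylindrical/$b$-index, then identify this with $\mathrm{ind}_{APS}(D^W_{\mathcal{L}},0)$. The only difference is citational: the paper invokes the $b$-Mishchenko--Fomenko calculus of \cite{LPmono} and the identification in \cite[Proposition 2.4]{PSrhoInd}, where you point to \cite{LP}.
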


\begin{proof}
We write $W_\infty$ for $W$ glued together with the infinite cylinder $(1,\infty)\times \partial W$. Let $y$ denote the normal variable and write $g=\mathrm{d}y^2+g_{\partial W}$ near the boundary for some metric lifted from $\partial W$.  The metric $g$ can be extended to a metric $g_\infty$ on $W_\infty$ with positive scalar curvature by setting $g_\infty=\mathrm{d}y^2+g_{\partial W}$ on the cylinder $(1,\infty)\times \partial W$. We let $\mathcal{D}_{g,\infty}$ denote the $C^*(\pi_1(W))$-linear spin-Dirac operator on $W_\infty$ constructed from $g_\infty$ and twisted by the Mishchenko bundle. This is a self-adjoint regular operator because $g_\infty$ is complete. Since $g_\infty$ has scalar curvature with a positive uniform lower bound, $\mathcal{D}_{g,\infty}$ is invertible. Using the $b$-Mishchenko-Fomenko calculus (see \cite[Part III]{LPmono}), there is a well defined index class ${\rm ind}_{\rm MF}^b(\mathcal{D}_{g,\infty})\in K_*(C^*(\pi_1(W)))$ and ${\rm ind}_{\rm MF}^b(\mathcal{D}_{g,\infty})=0$ because $\mathcal{D}_{g,\infty}$ is invertible. By \cite[Proposition 2.4]{PSrhoInd},  ${\rm ind}_{APS}(D^W_{\mathcal{L}_{B\Gamma_2}},0)={\rm ind}_{\rm MF}^b(\mathcal{D}_{g,\infty})$ proving the lemma. 
\end{proof}

\subsection{The strong Novikov property}
\label{subsecononen}

The Novikov conjecture asserts that for an oriented manifold $M$, its higher signatures 
$$\mathrm{sign}_\nu(M,f):=\int_M f_M^*(\nu)\wedge L(M), \quad\nu\in H^*(B\pi_1(M)),$$ 
are oriented homotopy invariants of $M$. Here $L(M)$ denotes the $L$-class and $f_M$ the (up to homotopy) canonical mapping $M\to B\pi_1(M)$. A group $\Gamma$ is said to satisfy the Novikov conjecture if the higher signatures $\mathrm{sign}_\nu(M)$ are homotopy invariants of $M$ for any $\nu\in H^*(B\Gamma)$ and $f:M\to B\Gamma$. 

The group $\Gamma$ is said to satisfy the strong Novikov conjecture if the assembly map $\mu:K_*(B\Gamma)\to K_*(C^*(\Gamma))$ is injective. It follows from the homotopy invariance of higher indices (see \cite{hilsskand}) that for a given group the strong Novikov conjecture implies the Novikov conjecture. We shall study the relative setting: first we define relative higher signatures and we then show that rational injectivity of the relative assembly mapping implies homotopy invariance of relative higher signatures.

To define relative higher signatures, we use the relative cohomology groups of the mapping $B\phi:B\Gamma_1\to B\Gamma_2$. Let us briefly recall the construction of relative cohomology. As above, $h:Y\to X$ denotes a continuous mapping. The complex valued singular cochain complex of $X$ is denoted by $\mathcal{C}^*_{\mathrm{sing}}(X)$, and that of $Y$ by $\mathcal{C}^*_{\mathrm{sing}}(Y)$. The mapping $h$ pulls back cochains linearly $h^*:\mathcal{C}^*_{\mathrm{sing}}(X)\to \mathcal{C}^*_{\mathrm{sing}}(Y)$. The relative cohomology $H^*(h)$ of $h$ is the cohomology of the complex $\mathcal{C}^*_{\mathrm{sing}}(X)\oplus \mathcal{C}^{*+1}_{\mathrm{sing}}(Y)$ equipped with the coboundary operator
$$\partial^{\mathrm{rel}}:=\begin{pmatrix} \partial_X&0\\ -h^*& \partial_Y\end{pmatrix}.$$
In the special case that $h$ is an inclusion mapping, we write $H^*(X,Y)$ instead of $H^*_{\rm rel}(h)$. Specializing further to a manifold with boundary $W$, we note that if $h$ is the boundary inclusion, the de Rham map gives an isomorphism $H^*_{c,dR}(W^\circ)\to H^*(W,\partial W)$ from compactly supported de Rham cohomology to relative cohomology. In particular, for any closed form $\omega$ on $W$ we can define an integral $\int_W -\wedge \omega:H^*(W,\partial W)\to \C$. 

An important feature of relative cohomology is the a relative cohomology pairing with relative $K$-homology:
\[
\langle\cdot,\cdot\rangle_{\rm rel}:K_*^{\rm geo}(h)\times H^*_{\rm rel}(h)\to \C, \quad \langle(W,E,(f,g)),\lambda \rangle_{\rm rel}:=\int_W (f,g)^*\lambda\wedge \mathrm{ch}(E)\wedge \mathrm{Td}(W).
\]
By definition, the relative pairing is functorial in the sense that if $(f,g):[h:Y\to X]\to [h':Y'\to X']$ is a map of pairs, $x\in K_*^{\rm geo}(h)$ and $\lambda\in H^*_{\rm rel}(h')$ then 
\begin{equation}
\label{relcohompairfunc}
\langle(f,g)_*x,\lambda \rangle_{\rm rel}=\langle x,(f,g)^*\lambda \rangle_{\rm rel}.
\end{equation}

\begin{define}
Let $\nu\in H^*(B\phi)$ be a relative cohomology class. For an oriented manifold with boundary $W$ and map of pairs 
$$(f,g):[i:\partial W\to \overline{W}]\to [B\phi:B\Gamma_1\to B\Gamma_2],$$ 
the relative higher signature of $(W,(f,g))$ defined from $\nu$ is 
$$\mathrm{sign}_\nu(W,(f,g)):=\int_W (f,g)^*(\nu)\wedge L(W).$$
\end{define}

In the literature \cite{Lott,weinnov}, one finds analogues to the Novikov conjecture in the relative setting. Homotopy invariance of relative higher signatures of manifolds with boundary was discussed in \cite{leichpiahigher}. We will now consider a strong version of the relative Novikov property and prove a result about the homotopy invariance of relative higher signatures. 

\begin{define}
A group homomorphism $\phi:\Gamma_1\to \Gamma_2$ has the strong relative Novikov property if the free assembly mapping $\mu_{\rm geo}^\phi:K_*^{\rm geo}(B\phi)\to K_*^{\rm geo}(pt;\phi)$ is rationally injective. 
\end{define}

We remark that one could formulate the strong relative Novikov property using other $C^*$-completions of the group algebra. The functoriality of the maximal completion ensures that the formulation in the maximal completion is the weakest version of a strong relative Novikov property. Before discussing the implications of the strong relative Novikov property on homotopy invariance of relative higher signatures, we introduce some terminology. The reader should recall the notion of homotopy equivalence of pairs from Definition \ref{homotopiesofomor} (see page \pageref{homotopiesofomor}).

If $W$ and $W'$ are manifolds with boundary and $(f,g):[i:\partial W\to \overline{W}]\to [i':\partial W'\to \overline{W}']$ is a morphism, we say that $(f,g)$ is of product type near the boundary if $f|_{\partial W\times (0,1]}=g\times\mathrm{id}_{(0,1]}$. Here we identify $\partial W\times (0,1]$ and $\partial W'\times (0,1]$ with collar neighborhoods of the boundaries in $\overline{W}$ and $\overline{W}'$, respectively. If $(f,g):[i:\partial W\to \overline{W}]\to [i':\partial W'\to \overline{W}']$ is an arbitrary morphism, we can extend to a morphism $(f_2,g_2):[i:\partial W\to \overline{W}_2]\to [i':\partial W'\to \overline{W}'_2]$ of product type near the boundary. After conjugating by homeomorphisms $\overline{W}\cong \overline{W}_2$ and $\overline{W}'\cong \overline{W}'_2$, we obtain a morphism of product type $(\tilde{f},\tilde{g}):[i:\partial W\to \overline{W}]\to [i':\partial W'\to \overline{W}']$. The following proposition follows from applying the construction $(f,g)\mapsto (\tilde{f},\tilde{g})$ to a homotopy equivalence.

\begin{prop}
\label{produhom}
Let $W$ and $W'$ be manifolds with boundary. If $[i:\partial W\to \overline{W}]$ and $[i':\partial W'\to \overline{W}']$ are homotopy equivalent, then there is a homotopy equivalence of  $[i:\partial W\to \overline{W}]$ and $[i':\partial W'\to \overline{W}']$  through morphisms of product type near the boundary.
\end{prop}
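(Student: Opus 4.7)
The plan is to promote the given homotopy equivalence into one built entirely from morphisms of product type near the boundary by applying the ``product-type modification'' procedure (described in the paragraph immediately preceding the proposition) in a coherent way to every morphism \emph{and} every homotopy involved. The key observation is that this modification, although defined by first passing to the enlarged manifolds $\overline{W}_2$ and $\overline{W}'_2$ and then conjugating by collar-stretching homeomorphisms $h_W:\overline{W}_2\xrightarrow{\cong} \overline{W}$ and $h_{W'}:\overline{W}'_2\xrightarrow{\cong}\overline{W}'$, is compatible with homotopies because the homeomorphisms $h_W$ and $h_{W'}$ can be chosen independently of the morphism and are each isotopic to the identity through maps preserving the product structure of a collar.

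First I would fix, once and for all, collar neighbourhoods of $\partial W$ and $\partial W'$ and homeomorphisms $h_W$ and $h_{W'}$ as above. For any morphism $(f,g):[i:\partial W\to \overline{W}]\to [i':\partial W'\to \overline{W}']$, define its product-type modification $(\tilde f,\tilde g)$ by $\tilde f:=h_{W'}\circ f_2\circ h_W^{-1}$ and $\tilde g:=g$, where $f_2:\overline{W}_2\to\overline{W}'_2$ extends $f$ cylindrically via $(t,x)\mapsto (t,g(x))$ on $[1,2]\times \partial W$. Because the same recipe applies to morphisms out of the thickened pair $[\mathrm{id}_{[0,1]}\times i:[0,1]\times \partial W\to [0,1]\times \overline{W}]$, any homotopy between morphisms produces, by the same modification, a homotopy through morphisms of product type near the boundary between their modifications.

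Second I would establish functoriality up to product-type homotopy: for composable morphisms, $\widetilde{(f_1,g_1)\circ(f_2,g_2)}$ is homotopic to $(\tilde f_1,\tilde g_1)\circ(\tilde f_2,\tilde g_2)$ through morphisms of product type near the boundary, and $\widetilde{\mathrm{id}}$ is homotopic to $\mathrm{id}$ in the same sense. The first statement follows by straight-line interpolating between $h_\bullet$ and $\mathrm{id}$ inside a slightly enlarged collar, which realizes the discrepancy between a single cylindrical extension and a pair of cylindrical extensions glued together; this interpolation is itself of product type in the outer part of the collar, hence defines a product-type homotopy. The second statement is immediate since $h_W$ is isotopic to the identity by a collar-preserving isotopy.

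Applying these two facts to the given homotopy equivalence $(f,g)$, $(f',g')$ with homotopies $(f',g')\circ (f,g)\simeq \mathrm{id}$ and $(f,g)\circ (f',g')\simeq \mathrm{id}$, one concatenates the product-type modifications of the supplied homotopies with the product-type homotopies from step two to conclude that $(\tilde f,\tilde g)$ and $(\tilde f',\tilde g')$ are mutually inverse up to product-type homotopy. The main obstacle I anticipate is the bookkeeping in step two: writing out the explicit product-type homotopy comparing $\widetilde{\psi\circ\varphi}$ to $\tilde\psi\circ\tilde\varphi$ requires choosing the collar parametrizations carefully so that the cylindrical extensions of $\varphi$ and $\psi$ interact correctly with the collar-straightening homeomorphism $h_{W'}^{-1}\circ h_{W'}$ appearing in the middle of the composition. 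Once a uniform choice of parametrizations is fixed, however, the required homotopy is produced by a straight-line interpolation inside the collar.
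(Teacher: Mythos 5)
Your proposal is correct and follows the same route as the paper: the paper's proof consists precisely of the observation that the product-type modification $(f,g)\mapsto(\tilde f,\tilde g)$ described in the preceding paragraph can be applied not only to the two morphisms of the homotopy equivalence but also to the homotopies themselves (viewed as morphisms out of the thickened pair $[\mathrm{id}_{[0,1]}\times i]$), together with the compatibility-up-to-product-type-homotopy checks you spell out. Your write-up merely makes explicit the bookkeeping the paper leaves implicit.
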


\begin{theorem}
\label{homotopyandnovi}
Suppose $\phi:\Gamma_1\to \Gamma_2$ is a group homomorphism with the strong relative Novikov property. Then for any manifold with boundary $W$ and map of pairs 
$$(f,g):[i:\partial W\to \overline{W}]\to [B\phi:B\Gamma_1\to B\Gamma_2],$$ 
the relative higher signatures
\begin{equation}
\label{homotopequ}
\mathrm{sign}_\nu(W, (f,g)):=\int_W (f,g)^*(\nu)\wedge L(W), \quad\nu\in H^*(B\phi),
\end{equation}
are orientation preserving homotopy invariants of $[i:\partial W\to \overline{W}]$. 
\end{theorem}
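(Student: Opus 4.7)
The plan is to interpret the higher signatures $\mathrm{sign}_\nu(W)$ as pairings between a rational $K$-homology class of the pair $[i:\partial W\to\overline{W}]$ and relative cohomology classes $\nu\in H^*(B\phi)$, and then to reduce homotopy invariance of these pairings, via the strong relative Novikov property, to homotopy invariance of the assembled signature index in $K_*(C_\phi)$; the latter is precisely what Wahl's theorem in \cite{WahlSurSet} supplies.

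To implement this, I would first pass to the oriented model of relative geometric $K$-homology (Remark \ref{firstorientedremark}) and define the signature class of $W$ as the cycle
\begin{equation*}
\mathrm{Sign}(W):=(\overline{W},\Lambda^*T^*_{\field{C}}W,(\mathrm{id}_{\overline{W}},\mathrm{id}_{\partial W}))\in K_{\dim W}^{\textnormal{geo}}(i:\partial W\to \overline{W}),
\end{equation*}
equipped with the signature Clifford structure of product type near the boundary. Functoriality then produces $\mathrm{Sign}(W;f,g):=(f,g)_*\mathrm{Sign}(W)\in K_{\dim W}^{\textnormal{geo}}(B\phi)$, and Proposition \ref{produhom} reduces the theorem to showing that $\mathrm{Sign}(W;f,g)\otimes\field{Q}$ is invariant under orientation-preserving homotopy equivalences of product type near the boundary. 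Applying assembly and the isomorphism $\Phi_{\textnormal{cone}}$ of Theorem \ref{defIndMap}, the class
\begin{equation*}
\Phi_{\textnormal{cone}}\circ\mu_{\rm geo}^\phi\,\mathrm{Sign}(W;f,g)\in K_{\dim W+1}(C_\phi)
\end{equation*}
can be identified with a higher Atiyah--Patodi--Singer-type signature index with coefficients in the Mishchenko bundle. By Wahl's theorem \cite{WahlSurSet}, in its form for Hilbert--Poincar\'e pairs over the mapping cone $C_\phi$, this $C_\phi$-valued index is an orientation-preserving homotopy invariant of $[i:\partial W\to \overline{W}]$. The strong relative Novikov property then promotes this to rational homotopy invariance of $\mathrm{Sign}(W;f,g)$ itself. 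Finally, one identifies the higher signature with the Chern-character pairing $\langle \mathrm{ch}(\mathrm{Sign}(W;f,g)),\nu\rangle$ using the Hirzebruch identity $\mathrm{ch}(\sigma_{\Lambda^*})\,\mathrm{Td}=L$; the relevant relative Chern character is the one constructed in \cite{DGrelII}, and the pairing equals $\int_W (f,g)^*(\nu)\wedge L(W)$ by a standard computation on cycles.

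The main obstacle will be the matching of Wahl's formulation of the relative signature index with $\Phi_{\textnormal{cone}}\circ \mu_{\rm geo}^\phi\,\mathrm{Sign}(W;f,g)$: one needs to track the signature operator of $W$ through the APS-construction, exhibit a trivializing spectral section on $\partial W$ (which exists because the boundary signature operator is symmetric under Hodge duality), and verify that the resulting higher APS-index in $K_*(C_\phi)$ coincides with Wahl's homotopy invariant. A secondary, but more routine, verification is that the relative Chern character of $\mathrm{Sign}(W;f,g)$ evaluates against $\nu\in H^*(B\phi)$ to recover the integral in \eqref{homotopequ}; this reduces, through the six-term sequences relating the absolute and relative settings, to Hirzebruch's signature theorem in the interior together with a transgression argument on the collar.
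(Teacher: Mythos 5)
Your overall strategy coincides with the paper's: represent the signature class by an oriented cycle in $K_*^{\rm geo}(i:\partial W\to\overline{W})$, push forward to $K_*^{\rm geo}(B\phi)$, use Proposition \ref{produhom} to reduce to product-type homotopy equivalences, apply $\Phi_{\rm cone}\circ\mu_{\rm geo}^\phi$, realize the result as a higher APS signature index, and conclude with Wahl's theorem and rational injectivity. However, there is a genuine gap at exactly the step you flag as ``the main obstacle.'' You propose to ``exhibit a trivializing spectral section on $\partial W$,'' asserting it exists because the boundary signature operator is symmetric under Hodge duality. This is false in general: by Leichtnam--Piazza, a trivializing operator for $D^{\partial W}_{\rm sign}$ twisted by $g^*(\mathcal{L}_{B\Gamma_1})$ over $C^*(\Gamma_1)$ exists if and only if ${\rm ind}_{AS}$ of that operator vanishes in $K_{*-1}(C^*(\Gamma_1))$, and for the signature operator of a closed manifold this index is (up to normalization) the assembled symmetric signature of $\partial W$, which is typically nonzero (e.g.\ $\partial W=T^2$, $\Gamma_1=\field{Z}^2$). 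Consequently Lemma \ref{thelemmaformerlyknownasproposition3.12} cannot be applied to a single $W$, and $\Phi_{\rm cone}\circ\mu_{\rm geo}^\phi\,\mathrm{Sign}(W;f,g)$ is \emph{not} of the form $j_*(\mathrm{ind}_{APS}(\cdot))$ in general; the Cayley-transform contribution does not go away.

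The paper repairs exactly this point by working with the difference class $x=\mu_{\rm geo}^\phi[(f,g)_*{\rm Sign}_W]-\mu_{\rm geo}^\phi[(f',g')_*{\rm Sign}_{W'}]$, i.e.\ with the disjoint union $W\,\dot{\cup}\,(-W')$. Kasparov's homotopy invariance of the higher index of the signature operator on the \emph{closed} boundaries forces the ${\rm ind}_{AS}$-obstruction of the union $\partial W\,\dot{\cup}\,(-\partial W')$ to vanish, which is precisely the hypothesis of Lemma \ref{thelemmaformerlyknownasproposition3.12}; only then is $x$ realized as $\Phi_{\rm cone}^{-1}\bigl(j_*({\rm ind}_{APS}(D^{W\cup -W'}_{\rm sign},(\alpha\cup\alpha')\phi_*A))\bigr)$, and Wahl's Theorem 8.4 --- which asserts the \emph{vanishing} of this APS index for a product-type homotopy equivalence, rather than homotopy invariance of an index attached to a single $W$ --- finishes the argument. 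You should restructure your middle step around the difference cycle; once that is done the rest of your outline goes through. A minor further remark: the paper sidesteps your Chern-character step (and the reliance on the yet-unpublished relative Chern character) by pairing ${\rm Sign}_W$ directly with $(f,g)^*\nu$ to identify $\mathrm{sign}_\nu(W)$.
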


Prior to the proof, we discuss what is meant by the relative higher signatures being homotopy invariants of $[i:\partial W\to \overline{W}]$. Let $(u,u_\partial ):[i:\partial W\to \overline{W}]\to [i':\partial W'\to \overline{W}']$ be an orientation preserving homotopy equivalence which we by Proposition \ref{produhom} can assume to be of product type. The morphism $(f,g):[i:\partial W\to \overline{W}]\to [B\phi:B\Gamma_1\to B\Gamma_2]$ is up to homotopy determined by a universal morphism $[i:\partial W\to \overline{W}]\to [Bi:B\pi_1(\partial W)\to B\pi_1(\overline{W})]$ and the group homomorphisms $q_2:=Bf_*:\pi_1(\overline{W})\to \pi_1(B\Gamma_2)= \Gamma_2$ and $q_1:=Bg_*:\pi_1(\partial W)\to \pi_1(B\Gamma_1)=\Gamma_1$. Let $(f',g'):[i:\partial W'\to \overline{W}']\to [B\phi:B\Gamma_1\to B\Gamma_2]$ denote the composition of $(f,g)$ with $(u,u_\partial )$. The statement of Theorem \ref{homotopyandnovi} is that 
$$\int_W (f,g)^*(\nu)\wedge L(W)=\int_{W'} (f',g')^*(\nu)\wedge L(W'), \quad\forall \nu\in H^*(B\phi).$$

\begin{proof}[Proof of Theorem \ref{homotopyandnovi}]
We let ${\rm Sign}_W\in K_*^{\rm geo}(W,\partial W)$ and ${\rm Sign}_{W'}\in K_*^{\rm geo}(W',\partial W')$ denote the $K$-homology classes defined from the signature operators. The reader is referred to \cite{bgv,leichpiahigher} for details on the signature operator. To describe the classes $\mathrm{Sign}_W$ and $\mathrm{Sign}_{W'}$ in geometric $K$-homology, we use its oriented model as discussed in Remark \ref{firstorientedremark} and \ref{secondorientedremark}. The Hodge star will be denoted by $\star$. We follow the convention in \cite[Chapter 3.6]{bgv} and normalize $\star$ so that $\star^2=1$. In the oriented model for geometric $K$-homology, see \cite{guentnerkhom,Kescont}, the class ${\rm Sign}_W$ is represented by the relative oriented cycle $(W,\wedge^*_0 W,(\mathrm{id}_W,\mathrm{id}_{\partial W}))$ where $\wedge^*_0 W$ is the Clifford bundle defined from the exterior algebra $\wedge^*W$ and the Hodge star, i.e. $\wedge^*_0 W=\wedge^* W$ graded by the Hodge star in the even-dimensional case and $\wedge^*_0 W=\ker(\star-1:\wedge^* W\to \wedge ^*W)$ in the odd-dimensional case. An analogous expression describes ${\rm Sign}_{W'}$. For any $\nu\in H^*(B\phi)$, we have that $\mathrm{sign}_\nu(W,(f,g))=\langle {\rm Sign}_W,(f,g)^*\nu\rangle_{\rm rel}$ and similarly for ${\rm sign}_\nu(W',(f',g'))$. The theorem therefore follows from Equation \eqref{relcohompairfunc} once $(f,g)_*\mathrm{Sign}_W=(f',g')_*\mathrm{Sign}_{W'}$ in $K_*^{\rm geo}(B\phi)\otimes \mathbb{Q}$. By assumption, $\mu_{\rm geo}^\phi$ is rationally injective, so it suffices to prove that $\mu_{\rm geo}^\phi\left[(f,g)_*\mathrm{Sign}_W\right]=\mu_{\rm geo}^\phi\left[(f',g')_*\mathrm{Sign}_{W'}\right]$ in $K_*^{\rm geo}(pt;\phi)$.

Consider the class $x=\mu_{\rm geo}^\phi\left[(f,g)_*\mathrm{Sign}_W\right]-\mu_{\rm geo}^\phi\left[(f',g')_*\mathrm{Sign}_{W'}\right]\in K_*^{\rm geo}(pt;\phi)$. The class $x$ is represented by the following cycle in the oriented model
\begin{align*}
(W, (\wedge^*_0 W\otimes f^*(\mathcal{L}_{B\Gamma_2}) ,&\wedge^*_0 W|_{\partial W}\otimes g^*(\mathcal{L}_{B\Gamma_1}) ,\alpha))-\\
&-(W', (\wedge^*_0 W'\otimes f'^*(\mathcal{L}_{B\Gamma_2}) ,\wedge^*_0 W'|_{\partial W'} \otimes g'^*(\mathcal{L}_{B\Gamma_1}) ,\alpha')).
\end{align*}
By homotopy invariance of the higher index of the signature operator, see for instance \cite[Theorem 6.3]{Kas}, we have that 
$$\mathrm{ind}_{AS}(\partial W,\wedge^*_0 W|_{\partial W}\otimes g^*(\mathcal{L}_{B\Gamma_1}))=\mathrm{ind}_{AS}(\partial W',\wedge^*_0 W'|_{\partial W'}\otimes g'^*(\mathcal{L}_{B\Gamma_1}))\in K_*(C^*(\Gamma_1)).$$
By Proposition \ref{thelemmaformerlyknownasproposition3.12}, we can realize $x$ as an APS-index of the form 
$$x=\Phi_{\rm cone}^{-1}(j_*({\rm ind}_{APS}(D^{ W\cup -W'}_{\rm sign} ,(\alpha\cup \alpha')\phi_*A))),$$
where $D^{ W\cup -W'}_{\rm sign}$ denotes the $C^*(\Gamma_2)$-linear signature operator on the $C^*(\Gamma_2)$-Clifford bundle 
$$\wedge^*_0 W\otimes f^*(\mathcal{L}_{B\Gamma_2})\cup (-\wedge^*_0 W'\otimes f'^*(\mathcal{L}_{B\Gamma_2}))\to W\cup -W',$$
and $A$ is a trivializing operator for the signature operator on 
$$\wedge^*_0 W|_{\partial W}\otimes g^*(\mathcal{L}_{B\Gamma_1})\cup (-\wedge^*_0 W'|_{\partial W'}\otimes g'^*(\mathcal{L}_{B\Gamma_1}))\to \partial W\cup -\partial W'.$$
The fact that $(u,u_\partial)$ is of product type near the boundary allows us to use \cite[Theorem 8.4]{WahlSurSet} which implies that ${\rm ind}_{APS}(D^{ W\cup -W'}_{\rm sign} ,(\alpha\cup \alpha')\phi_*A)=0$. We conclude that $x=0$.
\end{proof}

\section{Comparing assembly maps} \label{comAssMap}
In this section we prove the following theorem:
\begin{theorem} \label{comDiaGeoVsCWY}
Let $\phi: \Gamma_1 \rightarrow \Gamma_2$ be a group homomorphism and $B\phi: B\Gamma_1 \rightarrow B\Gamma_2$ the map $\phi$ induces at the classifying space level. In an abuse of notation, let $\phi$ also denote the $*$-homomorphism induced at the maximal $C^*$-algebra level and $B\phi$ the $*$-homomorphism $C^*_L(B\Gamma_1)\to C^*_L(B\Gamma_2)$ induced at the maximal localization algebra level. We let $\psi:C^*(E\Gamma_1)^{\Gamma_1}\to C^*(E\Gamma_2)^{\Gamma_2}$ denote the $*$-homomorphism constructed from $B\phi$ and $C_\psi$ its mapping cone. If one of the following conditions hold:
\begin{itemize}
\item The mapping $\phi_*:K_*(C^*(\Gamma_1))\to K_*(C^*(\Gamma_2))$ is (rationally) surjective.
\item The mapping $(B\phi)_*:K_*(B\Gamma_1)\to K_*(B\Gamma_2)$ is (rationally) injective.
\item For a finite $CW$-pair $(X,Y)$, $\Gamma_1=\pi_1(Y)$, $\Gamma_2=\pi_1(X)$ and $\phi$ being induced by the inclusion, the natural mapping $K_*(C_\phi)\to K_*(SC^*(\pi_1(X/Y)))\oplus K_*(C^*(\Gamma_1))$ is (rationally) injective.
\end{itemize}
then the following diagram commutes (rationally)
\begin{equation}
\label{commutingdiagrem}
\begin{CD}
K^{\textnormal{geo}}_*(B \phi) @>\mu_{\textnormal{geo}} >> K^{\textnormal{geo}}_*(pt;\phi)  \\
@V\mathrm{ind}_L^{\rm rel} VV @VV\Phi_{{\rm cone}}V   \\
K_{*+1}(C_{B \phi}) @>\mu_{CWY} >> K_{*+1}(C_{\psi}) \\
\end{CD}
\end{equation}
where the horizontal maps are the assembly maps (defined respectively in Sections \ref{CWYmap} and \ref{GeoMap}) and the vertical maps are the isomorphisms defined respectively in Theorem \ref{relaassonkhom} and Equation \eqref{definofphicone} in Lemma \ref{cyctoclasses}. 
\end{theorem}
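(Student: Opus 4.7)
The plan is to reduce commutativity in \eqref{commutingdiagrem} to the absolute analog, and then close the remaining gap using the exact sequences from Theorems \ref{sesforbphi}, \ref{relaassonkhom}, and \ref{defIndMap}. The first task is to verify that, for any discrete group $\Gamma$, the absolute square
\begin{equation*}
\begin{CD}
K^{\textnormal{geo}}_*(B\Gamma) @>\mu_{\textnormal{geo}} >> K^{\textnormal{geo}}_*(pt;C^*(\Gamma)) \\
@V\mathrm{ind}_L VV @VV\Phi V \\
K_*(C^*_L(B\Gamma)) @>\mu_{CWY}>> K_*(C^*(\Gamma)) \\
\end{CD}
\end{equation*}
commutes, where $\Phi$ is the higher Mishchenko--Fomenko index. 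On a representing cycle $(M,E_{\mathbb{C}},f)$ both routes produce the higher index of the Dirac operator on $M$ twisted by $E\otimes f^*\mathcal{L}_{B\Gamma}$; this is the standard agreement between Baum--Douglas assembly and the Kasparov/localization-algebra assembly (cf.\ \cite{land}).

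\textbf{Two stacked exact ladders.}
By Theorem \ref{sesforbphi}, the mapping $\mu_{\textnormal{geo}}^\phi$ sits inside a ladder of six-term exact sequences whose two neighboring squares involve the absolute assembly maps $\mu_{\Gamma_1}$ and $\mu_{\Gamma_2}$. The analogous ladder on the CWY side follows from Proposition \ref{cwyexact} combined with the mapping-cone sequence for $C_\psi$. The vertical isomorphisms $\mathrm{ind}_L^{\rm rel}$ and $\Phi_{\textnormal{cone}}$ interpolate between these ladders by Theorems \ref{relaassonkhom} and \ref{defIndMap}. Gluing the four ladders produces a three-dimensional diagram in which every face is known to commute except possibly the central square \eqref{commutingdiagrem}; the two absolute faces commute by the previous paragraph. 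Setting $D:=\Phi_{\textnormal{cone}}\circ\mu_{\textnormal{geo}}^\phi-\mu_{CWY}\circ\mathrm{ind}_L^{\rm rel}$, a direct diagram chase then yields
\begin{equation*}
D\circ r=0 \qquad\text{and}\qquad \delta\circ D=0.
\end{equation*}

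\textbf{Finishing under the three hypotheses.}
Under hypothesis (i), rational surjectivity of $\phi_*:K_*(C^*(\Gamma_1))\to K_*(C^*(\Gamma_2))$ forces $r$ in the bottom exact sequence to be rationally zero, hence $\delta$ is rationally injective on $K_{*+1}(C_\psi)$; combined with $\delta\circ D=0$ this gives $D\otimes\mathbb{Q}=0$.
Under hypothesis (ii), rational injectivity of $(B\phi)_*$ forces $\delta$ in the top exact sequence to be rationally zero, hence $r$ is rationally surjective onto $K^{\textnormal{geo}}_*(B\phi)$; combined with $D\circ r=0$ this gives $D\otimes\mathbb{Q}=0$.
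Under hypothesis (iii), the two partial vanishings already place $D\otimes\mathbb{Q}$ in the kernel of the assumed injective map $K_{*+1}(C_\psi)\otimes\mathbb{Q}\hookrightarrow (K_*(SC^*(\pi_1(X/Y)))\oplus K_*(C^*(\Gamma_1)))\otimes\mathbb{Q}$, since the second summand captures $\delta$ and the first captures the image of $r$ via the Puppe cofiber sequence for $(X,Y)$; therefore $D\otimes\mathbb{Q}=0$.

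\textbf{Main obstacle.}
The serious ingredient is the first step: an explicit cycle-level identification of the Mishchenko-bundle construction of $\mu_{\textnormal{geo}}$ with the localization/asymptotic-morphism construction of $\mu_{CWY}$ under the higher-index isomorphism $\Phi$. The difficulty is that $\mu_{CWY}$ is built from the asymptotic morphism $(\phi_s)$ of Proposition \ref{equiiso}, while $\mu_{\textnormal{geo}}$ is built by twisting a Dirac operator with $\mathcal{L}_{B\Gamma}$; matching them requires comparing the functional calculus of the twisted Dirac operator with the image of its localized index under $(\phi_s)_\ast$. Once this absolute case is secured, all remaining arguments are formal diagram chases, and the three hypotheses appear only as the minimal rational constraints needed to pin $D$ down after the exact sequences have determined its image and kernel.
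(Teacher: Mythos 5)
Your proposal follows essentially the same route as the paper's proof: take the absolute case as known (the paper likewise asserts it rather than reproving it), use naturality and the two exact-sequence ladders to obtain $D\circ r=0$ and $\delta\circ D=0$ for the difference $D:=\Phi_{\rm cone}\circ\mu_{\rm geo}-\mu_{CWY}\circ\mathrm{ind}_L^{\rm rel}$, and then dispatch the three hypotheses exactly as the paper does (via injectivity of $\delta$, surjectivity of $r$, and injectivity into the direct sum, respectively). The only soft spot is the phrasing in case (iii): the vanishing of the first component is really naturality of $D$ under the map of pairs $(X,Y)\to(X/Y,\mathrm{pt})$ combined with the absolute case for $X/Y$, not a formal consequence of $D\circ r=0$ — but this is the same mechanism the paper invokes, so the argument is sound.
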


\begin{remark}
We note that $(B\phi)_*:K_*(B\Gamma_1)\to K_*(B\Gamma_2)$ is rationally injective whenever the strong Novikov conjecture holds and  $\phi_*:K_*(C^*(\Gamma_1))\to K_*(C^*(\Gamma_2))$ is rationally injective. Moreover (the last condition appearing in Theorem \ref{comDiaGeoVsCWY}) the injectivity of $K_*(C_\phi)\to K_*(SC^*(\pi_1(X/Y)))\oplus K_*(C^*(\pi_1(Y)))$, is equivalent to the exactness of 
$$K_{*}(C^*(\pi_1(Y)))\to K_*(C^*(\pi_1(X)))\to K_*(C^*(\pi_1(X/Y))).$$

\end{remark}

We now turn to the proof of Theorem \ref{comDiaGeoVsCWY}. The idea is to use the absolute case to prove the commutativity of the diagram \eqref{commutingdiagrem}. It remains to verify that the assumptions in Theorem \ref{comDiaGeoVsCWY} imply the commutativity of \eqref{commutingdiagrem}.

\begin{lemma}
If the mapping $\phi_*:K_*(C^*(\Gamma_1))\to K_*(C^*(\Gamma_2))$ is (rationally) surjective, then the diagram \eqref{commutingdiagrem} (rationally) commutes.
\end{lemma}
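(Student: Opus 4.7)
The plan is to exploit the naturality of the mapping cone exact sequences together with the known commutativity of the absolute comparison between the Baum--Douglas assembly and the analytic assembly for a single group. Fix $\alpha\in K_*^{\textnormal{geo}}(B\phi)$ and define the \emph{defect}
$$D(\alpha):=\Phi_{\rm cone}(\mu_{\rm geo}(\alpha))-\mu_{CWY}(\mathrm{ind}_L^{\rm rel}(\alpha))\in K_{*+1}(C_\psi).$$
The goal is to show $D(\alpha)=0$, or rationally so under the rationally surjective hypothesis. I would proceed in two stages: first showing $\delta D(\alpha)=0$ in $K_{*+1}(C^*(\Gamma_1))$, and then using surjectivity of $\phi_*$ to conclude by exactness.

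For the first stage, I would apply the boundary map $\delta:K_{*+1}(C_\psi)\to K_{*+1}(C^*(\Gamma_1))$ of the mapping cone sequence and chase through the four naturality statements already established in the paper: Theorem \ref{defIndMap} yields $\delta\circ\Phi_{\rm cone}=\Phi^{\rm abs}\circ\delta^{\rm geo}$, Theorem \ref{relaassonkhom} yields $\delta_L\circ\mathrm{ind}_L^{\rm rel}=\mathrm{ind}_L\circ\delta^{B\phi}_{\rm geo}$, Theorem \ref{sesforbphi} yields $\delta^{\rm geo}\circ\mu_{\rm geo}=\mu_{\Gamma_1}\circ\delta^{B\phi}_{\rm geo}$, and the naturality of mapping cones yields $\delta\circ\mu_{CWY}=\mu_{CWY}^{\rm abs}\circ\delta_L$. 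Chaining these identities reduces $\delta D(\alpha)$ to the difference of the two compositions in the absolute assembly comparison, evaluated at $\delta^{B\phi}_{\rm geo}(\alpha)\in K_{*-1}^{\rm geo}(B\Gamma_1)$. Since the absolute identity $\Phi^{\rm abs}\circ\mu_{\Gamma_1}\,\!^{\rm geo}=\mu_{CWY}^{\rm abs}\circ\mathrm{ind}_L$ is classical (see \cite{land} as referenced in the introduction), we obtain $\delta D(\alpha)=0$.

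For the second stage, exactness at $K_{*+1}(C_\psi)$ places $D(\alpha)$ in the image of $r:K_*(C^*(\Gamma_2))\to K_{*+1}(C_\psi)$, and exactness at $K_*(C^*(\Gamma_2))$ identifies this image with the cokernel $K_*(C^*(\Gamma_2))/\mathrm{im}(\phi_*)$. If $\phi_*$ is surjective, this cokernel vanishes, forcing $D(\alpha)=0$; the same argument works after tensoring with $\mathbb{Q}$ under rational surjectivity.

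The principal technical obstacle I foresee is compatibility across the different models for the codomain: $\mu_{CWY}$ lands naturally in $K_{*+1}(C_\psi)$ via the Morita equivalence $C^*(\tilde{X})^{\Gamma}\sim C^*(\Gamma)\otimes\mathcal{K}$ from Remark \ref{staisoon}, while $\Phi_{\rm cone}$ is defined on the quotient $C_\phi$. One must confirm that the identification $K_*(C_\psi)\cong K_*(C_\phi)$ intertwines the boundary maps of the two mapping cone sequences so that the diagram chase is honest. Modulo this bookkeeping, the proof is a clean application of the five-lemma philosophy, and no further geometric input is required beyond the absolute comparison and the four naturality squares already proved.
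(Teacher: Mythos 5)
Your proposal is correct and follows essentially the same route as the paper: the defect class is killed by showing its image under the boundary map $\delta:K_{*+1}(C_\psi)\to K_{*+1}(C^*(\Gamma_1))$ vanishes (via the absolute case and the naturality squares) and then observing that surjectivity of $\phi_*$ forces $\delta$ to be injective (equivalently, $\mathrm{im}(r)=0$) by exactness of the mapping cone sequence. The paper's proof is just a terser version of your two-stage argument.
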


\begin{proof}
Consider an $x\in K^{\textnormal{geo}}_*(B \phi)$ and set $y:=(\Phi_{{\rm cone}}\circ \mu_{\textnormal{geo}}-\mu_{CWY}\circ \mathrm{ind}_L^{\rm rel})x\in K_{*+1}(C_{\psi})$. We need to show that $y$ vanishes (rationally). Since the diagram \eqref{commutingdiagrem} commutes in the absolute case, $\delta(y)=0\in K_{*-1}(C^*(\Gamma_1))$. However, if $\phi_*:K_*(C^*(\Gamma_1))\to K_*(C^*(\Gamma_2))$ is (rationally) surjective then $\delta$ is (rationally) injective and $y$ vanishes (rationally). 
\end{proof}

\begin{lemma}
If the mapping $(B\phi)_*:K_*(B\Gamma_1)\to K_*(B\Gamma_2)$ is (rationally) injective, then the diagram \eqref{commutingdiagrem} (rationally) commutes.
\end{lemma}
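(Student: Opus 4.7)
The plan is to lift any class $x\in K^{\textnormal{geo}}_*(B\phi)$ rationally to a class $x'\in K^{\textnormal{geo}}_*(B\Gamma_2)$ and then reduce commutativity on $x$ to the absolute commutativity of the analogous diagram for $\Gamma_2$ by means of four naturality squares that are all established earlier in the paper.

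First I would exploit the hypothesis. The relevant portion of the exact sequence of Theorem \ref{relGeoKhomDomExaSeq} for $B\phi:B\Gamma_1\to B\Gamma_2$ reads
\[
K_*^{\textnormal{geo}}(B\Gamma_2)\xrightarrow{r}K_*^{\textnormal{geo}}(B\phi)\xrightarrow{\delta}K_{*+1}^{\textnormal{geo}}(B\Gamma_1)\xrightarrow{(B\phi)_*}K_{*+1}^{\textnormal{geo}}(B\Gamma_2).
\]
Rational injectivity of $(B\phi)_*$ forces $\mathrm{im}(\delta)\subseteq \ker((B\phi)_*)=0$ rationally, so $\delta=0$ rationally and $r$ is rationally surjective. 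Given $x\in K^{\textnormal{geo}}_*(B\phi)\otimes \mathbb{Q}$ I then pick $x'\in K^{\textnormal{geo}}_*(B\Gamma_2)\otimes \mathbb{Q}$ with $r(x')=x$.

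Next I would chase $x'$ around four naturality squares. Theorem \ref{sesforbphi} provides $\mu_{\textnormal{geo}}\circ r=r\circ \mu_{\Gamma_2}$; Theorem \ref{defIndMap} provides $\Phi_{\textnormal{cone}}\circ r=r\circ \mathrm{ind}_{AS}$, where the $r$ on the right is the canonical map $K_*(C^*(\Gamma_2))\to K_{*+1}(C_\phi)\cong K_{*+1}(C_\psi)$ and the square applies because the cycle $r(M,E_{C^*(\Gamma_2)})$ has empty $B_1$-bundle component, forcing the Cayley-transform term in \eqref{definofphicone} to vanish and the higher APS-index to collapse to the Atiyah--Singer index; Theorem \ref{relaassonkhom} provides $\mathrm{ind}_L^{\rm rel}\circ r=r\circ \mathrm{ind}_L$; and the naturality of the CWY assembly recalled in Section \ref{CWYmap} provides $\mu_{CWY}\circ r=r\circ \mu_{\textnormal{max}}$. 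Combining these four identities, both $\Phi_{\textnormal{cone}}\circ \mu_{\textnormal{geo}}(x)$ and $\mu_{CWY}\circ \mathrm{ind}_L^{\rm rel}(x)$ are written as $r$ applied to, respectively, $\mathrm{ind}_{AS}\circ \mu_{\Gamma_2}(x')$ and $\mu_{\textnormal{max}}\circ \mathrm{ind}_L(x')$ inside $K_*(C^*(\Gamma_2))$.

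Finally I would invoke the absolute commutativity for $\Gamma_2$, which is the input of the whole section, to identify these two elements of $K_*(C^*(\Gamma_2))$; applying the common $r:K_*(C^*(\Gamma_2))\to K_{*+1}(C_\psi)$ then yields the required equality on $x$, and by $\mathbb{Q}$-linearity on all of $K^{\textnormal{geo}}_*(B\phi)\otimes \mathbb{Q}$. The only non-formal step is the absolute commutativity for $\Gamma_2$; the main bookkeeping subtlety is the naturality square for $\Phi_{\textnormal{cone}}$ with respect to $r$, whose legitimacy rests on the observation above that cycles in the image of $r:K_*^{\textnormal{geo}}(pt;C^*(\Gamma_2))\to K_*^{\textnormal{geo}}(pt;\phi)$ carry no boundary data.
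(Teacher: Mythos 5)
Your proposal is correct and follows essentially the same route as the paper: use rational injectivity of $(B\phi)_*$ to get rational surjectivity of $r:K_*^{\textnormal{geo}}(B\Gamma_2)\to K_*^{\textnormal{geo}}(B\phi)$, lift $x$ to $x'$, and push the difference of the two compositions through the naturality squares to reduce to the absolute case for $\Gamma_2$, which the paper likewise takes as the known input. You merely spell out the four naturality squares that the paper compresses into the single phrase ``by naturality.''
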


\begin{proof}
If $(B\phi)_*:K_*(B\Gamma_1)\to K_*(B\Gamma_2)$ is (rationally) injective, then $r:K_*(B\Gamma_2)\to K_*(B\phi)$ is (rationally) surjective. In particular, if $x\in K_*(B\phi)$ there is a $y\in K_*(B\Gamma_2)$ such that $x=r(y)$ (rationally). By naturality, 
$$(\Phi_{{\rm cone}}\circ \mu_{\textnormal{geo}}-\mu_{CWY}\circ \mathrm{ind}_L^{\rm rel})x=r((\Phi_{{C^*(\Gamma_2)}}\circ \mu_{\textnormal{geo}}-\mu_{CWY}\circ \mathrm{ind}_L^{\rm rel})y)=r(0)=0.$$
\end{proof}

\begin{lemma}
Let $(X,Y)$ be a finite $CW$-pair and set $\Gamma_1:=\pi_1(Y)$, $\Gamma_2:=\pi_1(X)$. Let $\phi$ denote the mapping induced by the inclusion. If the natural mapping $K_*(C_\phi)\to K_*(SC^*(\pi_1(X/Y)))\oplus K_*(C^*(\Gamma_1))$ is (rationally) injective, then the diagram \eqref{commutingdiagrem} (rationally) commutes.
\end{lemma}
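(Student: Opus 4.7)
The plan is to reduce the problem to the two absolute cases (for $\Gamma_1$ and for $\pi_1(X/Y)$), invoking the injectivity hypothesis. Fix a class $x \in K_*^{\rm geo}(B\phi)$ and set
\[
y := (\Phi_{\rm cone} \circ \mu_{\rm geo} - \mu_{CWY} \circ \mathrm{ind}_L^{\rm rel})(x)\in K_{*+1}(C_\psi)\cong K_{*+1}(C_\phi).
\]
By the stated injectivity hypothesis, to conclude $y=0$ rationally it suffices to verify vanishing of $y$ under each of the two summands of the natural map, namely $\delta y \in K_*(C^*(\Gamma_1))$ and the image of $y$ in $K_{*+1}(SC^*(\pi_1(X/Y)))$. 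The first vanishing follows by the naturality argument used in the first lemma of this section: $\delta y$ agrees with the commutativity obstruction of the absolute diagram evaluated on $\delta x \in K_{*-1}^{\rm geo}(B\Gamma_1)$, which is zero since the absolute case holds for $\Gamma_1$.

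To handle the second projection, the plan is to exploit the quotient homomorphism $\rho:\Gamma_2=\pi_1(X)\to \pi_1(X/Y)$, which vanishes on $\phi(\Gamma_1)$ since this subgroup normally generates $\ker \rho$. Consequently $\rho_*\circ \phi: C^*(\Gamma_1)\to C^*(\pi_1(X/Y))$ factors as $C^*(\Gamma_1)\xrightarrow{\epsilon}\field{C}\xrightarrow{u}C^*(\pi_1(X/Y))$, producing a morphism of mapping-cone $C^*$-algebras $C_\phi\to C_{u\circ \epsilon}$ and, correspondingly, a morphism at the classifying-space side from $[B\phi:B\Gamma_1\to B\Gamma_2]$ to $[pt\to B\pi_1(X/Y)]$ (via $B\Gamma_1\to pt$ and $B\rho$). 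The natural map $K_{*+1}(C_\phi)\to K_{*+1}(SC^*(\pi_1(X/Y)))$ factors through $K_{*+1}(C_{u\circ \epsilon})$, composed with the projection onto the first summand in the splitting $K_{*+1}(C_{u\circ \epsilon})\cong K_{*+1}(SC^*(\pi_1(X/Y)))\oplus K_*(C^*(\Gamma_1))$ coming from the factorization of $u\circ \epsilon$ through $\field{C}$.

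Naturality of $\mu_{\rm geo}$, $\mu_{CWY}$, $\mathrm{ind}_L^{\rm rel}$ and $\Phi_{\rm cone}$ under the above morphism then identifies the image of $y$ in $K_{*+1}(C_{u\circ\epsilon})$ with the commutativity obstruction of \eqref{commutingdiagrem} for $u\circ\epsilon$, applied to the image of $x$ in the reduced relative $K$-homology $K_*^{\rm geo}(pt\to B\pi_1(X/Y))$. Since $u\circ\epsilon$ splits through $\field{C}$, the commutativity of \eqref{commutingdiagrem} for $u\circ \epsilon$ reduces via this splitting to the absolute compatibility for $\pi_1(X/Y)$ (which is assumed) together with the trivial case of the identity of $\field{C}$; any residual contribution is confined to the $K_*(C^*(\Gamma_1))$-summand and therefore projects to zero in $K_{*+1}(SC^*(\pi_1(X/Y)))$. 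Combined with the vanishing of $\delta y$, the injectivity hypothesis then yields $y=0$ rationally.

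The main obstacle I anticipate is verifying the naturality of $\Phi_{\rm cone}$ under the mapping-cone morphism $C_\phi\to C_{u\circ \epsilon}$ induced by $\rho_*$ and $\epsilon$. The construction in Equation \eqref{definofphicone} depends on a compatible choice of Dirac operators and trivializing operator, and one must check that applying $\rho_*$ and $\epsilon$ to the data on the source (which on cycles amounts to replacing the twisting bundle $\mathcal{L}_{B\Gamma_2}$ by $(B\rho)^*\mathcal{L}_{B\pi_1(X/Y)}$ and the boundary bundle $\mathcal{L}_{B\Gamma_1}$ by a trivial $\field{C}$-bundle) produces compatible data on the target whose higher APS index and Cayley-transform contributions match the $\rho_*$-image of the source's. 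This should reduce to inspection of the formulas in Lemma \ref{cyctoclasses}, using the functoriality of higher APS theory from \cite{LP}.
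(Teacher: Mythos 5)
Your proposal follows essentially the same route as the paper: the commutativity defect $\tau=\Phi_{\rm cone}\circ\mu_{\rm geo}-\mu_{CWY}\circ\mathrm{ind}_L^{\rm rel}$ is a natural transformation vanishing in the absolute case, and the hypothesis lets one detect it through the two components of the map to $K_*(SC^*(\pi_1(X/Y)))\oplus K_*(C^*(\Gamma_1))$, the first component being handled via the collapse $(X,Y)\to(X/Y,pt)$ exactly as in the paper's (very terse) proof. One step as written is wrong, however: the claimed splitting $K_{*+1}(C_{u\circ\epsilon})\cong K_{*+1}(SC^*(\pi_1(X/Y)))\oplus K_*(C^*(\Gamma_1))$ fails in general, since $(u\circ\epsilon)_*$ is nonzero on $K_0$ (already for $\Gamma_1=\pi_1(X/Y)=1$ the cone is contractible while the right-hand side is not), so the ``projection onto the first summand'' is not available; this is easily repaired by arguing as the paper does, i.e.\ by naturality of $\tau$ with respect to the morphism of pairs into the absolute data for $X/Y$ and $Y$ rather than through an internal decomposition of $K_{*+1}(C_{u\circ\epsilon})$. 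Your identification of the naturality of $\Phi_{\rm cone}$ as the substantive point to verify is apt; the paper simply asserts this naturality.
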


\begin{proof}
Define $\tau:=\Phi_{{\rm cone}}\circ \mu_{\textnormal{geo}}-\mu_{CWY}\circ \mathrm{ind}_L^{\rm rel}:K_*^{\textrm{geo}}(X,Y)\to K_*(C_\phi)$, where $\phi:=\pi_1(i)$ and $i:Y\to X$ denotes the inclusion. The mapping $\tau$ is a natural transformation that vanishes in the absolute case $Y=\emptyset$. We arrive at a commuting diagram
$$\begin{CD}
K^{\textnormal{geo}}_*(X,Y) @>>> K^{\textnormal{geo}}_{*+1}(X/Y)\oplus K^{\textnormal{geo}}_*(Y)  \\
@V\tau VV @VV0V   \\
K_{*}(C_{\phi}) @>>> K_*(SC^*(\pi_1(X/Y)))\oplus K_*(C^*(\Gamma_1)) \\
\end{CD}$$
Since the bottom horizontal arrow is injective, and the right vertical arrow is the zero mapping also the left vertical arrow is the zero mapping.
\end{proof}

\begin{remark}
Note that $\tau$ from the previous proof maps 
$$\tau:K_*^{\textrm{geo}}(X,Y)\to \ker(\delta:K_*(C_\phi)\to K_{*-1}(C^*(\Gamma_1))).$$ 
If $\tau$ lifts to a natural transformation $\hat{\tau}:K_*^{\textrm{geo}}(X,Y)\to K_*(C^*(\pi_1(X)))$ of functors on $CW$-pairs, then $\hat{\tau}=0$ by naturality and the diagram \eqref{commutingdiagrem} commutes. 
\end{remark}

\noindent
{\bf Acknowledgments} \\
The authors thank Stanley Chang, Shmuel Weinberger, and Guoliang Yu for providing them with a preliminary version of their paper \cite{CWY}. We also thank Thomas Schick for discussions. We are grateful to the referee of an earlier version of this paper; their comments have improved the paper substantially. The first listed author thanks the Fields Institute ``Workshop on Stratified Spaces: Perspectives from Analysis, Geometry and Topology" for funding. The second listed author thanks Centre for Symmetry and Deformation (Copenhagen) for travel support, the Knut and Alice Wallenberg foundation for support and the Swedish Research Council Grant 2015-00137 and Marie Sklodowska Curie Actions, Cofund, Project INCA 600398. The authors thank the Leibniz Universit\"at Hannover, the Graduiertenkolleg 1463 (\emph{Analysis, Geometry and String Theory}), Universit\'e Blaise Pascal Clermont-Ferrand, and the Focus Programme on $C^*$-algebras held at the University of M${\rm \ddot{u}}$nster in 2015 for facilitating this collaboration.

\vspace{0.25cm}
\noindent
Email address: robin.deeley@gmail.com \vspace{0.25cm} \\
{ \footnotesize Department of Mathematics, University of Colorado Boulder Campus Box 395, Boulder, CO 80309-0395, USA }
\vspace{.5cm}\\
Email address: goffeng@chalmers.se \vspace{0.25cm} \\
{ \footnotesize Department of Mathematical Sciences, Chalmers Tv\"argata 3, 412 96 G\"oteborg, Sweden }
\end{document}